\documentclass[a4paper]{amsart}
\usepackage{hyperref}   
\usepackage{amssymb,xcolor}
\usepackage{stmaryrd} % \owedge
\usepackage{enumerate}
\newcommand{\unfoldedcomment}[7]{}
\newcommand{\foldedcomment}[7]{}

\usepackage{mathtools}  
\mathtoolsset{showonlyrefs}
\newtheorem{theorem}{Theorem}[section]
\newtheorem{corollary}[theorem]{Corollary}
\newtheorem{definition}[theorem]{Definition}
\newtheorem{lemma}[theorem]{Lemma}
\newtheorem{remark}[theorem]{Remark}
\newtheorem{proposition}[theorem]{Proposition}
\numberwithin{equation}{section}
\numberwithin{figure}{section}
\begin{document}

\title{Llarull type theorems for bands in Three and Four dimensions}

\author{Xiaoxiang Chai}
\address{School of Mathematics and Statistics, Central China Normal
University, Wuhan, China}
\address{Department of Mathematics, POSTECH, Pohang, Gyeongbuk 37673, South
Korea}
\email{xxchai@kias.re.kr}

\author{Xueyuan Wan}
\address{Mathematical Science Research Center, Chongqing University of
Technology, Chongqing 400054, China}
\email{xwan@cqut.edu.cn}

\begin{abstract}
  Llarull's theorem asserts that the scalar curvature and the metric on the
  $n$-sphere cannot be bounded below at the same time by those of the standard
  $n$-sphere. Using the warped $\mu$-bubble method, we develop Llarull type
  theorems for three and four-dimensional bands with spectral scalar curvature
  bounds.
\end{abstract}

\subjclass{53C24.}

\keywords{Warped $\mu$-bubble, Llarull type rigidity, spectral scalar
curvature, warped product.}

{\maketitle}

\section{Introduction}

The standard $n$-sphere has the Llarull type rigidity.

\begin{theorem}[{\cite{llarull-sharp-1998}}]
  \label{std llarull}Let $f : (M, g) \to (\mathbb{S}^n, g_{\mathbb{S}^n})$ be
  a distance non-increasing spin map such that $R_g \geqslant n (n - 1)$, then
  $f$ is an isometry.
\end{theorem}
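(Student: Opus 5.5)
The plan is Llarull's original Dirac operator argument, run on the twisted spinor bundle $S_M \otimes f^*S_{\mathbb{S}^n}$ (in odd dimensions one first passes to $M\times\mathbb{S}^1$ or uses the suspension trick). Throughout I assume, as in Llarull's setting, that $M$ is closed, connected, spin, of dimension $n$, and that $f$ has nonzero degree. Without a degree hypothesis the statement fails, since a constant map is distance non-increasing.

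Let $E = f^*S_{\mathbb{S}^n}$ with the pulled-back connection, and let $D_E$ be the twisted Dirac operator on $S_M\otimes E$. The Lichnerowicz formula reads
\[
D_E^2=\nabla^*\nabla+\tfrac{R_g}{4}+\mathcal{R}^E ,
\]
where, in a $g$-orthonormal frame $\{e_i\}$,
\[
\mathcal{R}^E=\tfrac12\sum_{i,j} e_ie_j\otimes R^E(e_i,e_j).
\]
The curvature of the spinor bundle of the round sphere is $R^{S}(u,v)=-\tfrac14\,(u\wedge v)$ acting by Clifford multiplication. Diagonalize $df$ with singular values $\mu_i\le 1$, using orthonormal bases $e_i$ of $T_xM$ and $\epsilon_i$ of $T_{f(x)}\mathbb{S}^n$ with $df(e_i)=\mu_i\epsilon_i$. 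Llarull's pointwise estimate then gives
\[
\mathcal{R}^E\ \ge\ -\tfrac14\sum_{i\neq j}\mu_i\mu_j\ \ge\ -\tfrac{n(n-1)}{4}.
\]
Combined with $R_g\ge n(n-1)$, this shows $D_E^2\ge\nabla^*\nabla\ge0$.

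Next comes the index computation for $n$ even. The index of $D_E^+$ equals $\deg(f)\cdot\chi(\mathbb{S}^n)$ up to sign. Concretely, it is $\int_M \hat A(M)\operatorname{ch}(E)$, and only the top Chern character term survives: that term is $\deg f$ times the Euler class contribution $\chi(\mathbb{S}^n)=2$. So the index is nonzero and there is a nontrivial harmonic section $\psi$. Integrating the Lichnerowicz formula gives
\[
0=\int|\nabla\psi|^2+\int\langle(\tfrac{R_g}{4}+\mathcal{R}^E)\psi,\psi\rangle ,
\]
so $\psi$ is parallel and has constant nonzero norm, and every inequality above is an equality pointwise.

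The rigidity step is the main obstacle. Equality forces $R_g\equiv n(n-1)$. Equality in $\sum_{i\ne j}\mu_i\mu_j\le n(n-1)$ with $\mu_i\le1$ forces all $\mu_i=1$, since $\psi$ is nowhere zero and the estimate is attained on it. Hence $f$ is a local isometry. A local isometry from a closed manifold to the simply connected $\mathbb{S}^n$ is a covering map, and therefore an isometry. The delicate point is justifying that equality in the Clifford estimate really forces $\mu_i\mu_j=1$ for all $i\neq j$. Here I would follow Llarull's argument: bound each term $e_ie_j\otimes\epsilon_i\epsilon_j$ by an operator of norm one, so that equality requires $\mu_i\mu_j=1$ whenever the corresponding term is nonzero on $\psi$. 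When $n=2$, only the single product $\mu_1\mu_2$ appears, but that case still follows.

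For $n$ odd, apply the even case to $f\times c: M\times\mathbb{S}^1_r\to\mathbb{S}^n\times\mathbb{S}^1$, composed with a degree-one, $\epsilon$-contracting map onto $\mathbb{S}^{n+1}$ chosen with constant adjusted, and let $r\to\infty$. Alternatively, use the odd index pairing with the spinor bundle of $\mathbb{S}^n$. The rigidity step then goes through in the same way.
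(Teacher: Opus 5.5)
\textbf{The main gap is the odd-dimensional step.}

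Your even-dimensional argument is Llarull's original Dirac-operator proof, and you are right that a nonzero-degree hypothesis must be added. For comparison, the paper does not prove this statement: it quotes Llarull, and re-derives the result only for $n=3,4$ by a spin-free warped $\mu$-bubble foliation argument (Gauss--Bonnet or Listing's theorem on the leaves, barriers near the poles). In that approach the odd case $n=3$ causes no extra difficulty.

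In your proposal, the even case cannot be applied as a black box to $M\times\mathbb{S}^1_r\to\mathbb{S}^n\times\mathbb{S}^1_r\to\mathbb{S}^{n+1}$. The product only has $R\geqslant n(n-1)$, not $n(n+1)$. What you can do is rerun the Lichnerowicz estimate. The circle direction has singular value $\mu_{n+1}=O(1/r)$, but it contributes cross terms $2\mu_{n+1}\sum_{i\leqslant n}\mu_i$. So you only get $\tfrac{R_g}{4}+\mathcal{R}^{E_r}\geqslant -C/r$, and harmonic spinors satisfying
\[
\int|\nabla\psi_r|^2+\tfrac14\int\Big(R_g-\sum_{i\neq j}\mu_i\mu_j\Big)|\psi_r|^2\leqslant\tfrac{C}{r}\int|\psi_r|^2 .
\]
Neither rescaling the target nor choosing a cleverer degree-one map $\mathbb{S}^n\times\mathbb{S}^1_r\to\mathbb{S}^{n+1}$ removes this error. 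If it did, your argument applied to the round $\mathbb{S}^n\times\mathbb{S}^1_r$ itself would produce a nonzero parallel twisted spinor. A parallel section is annihilated by $R^{S}(\partial_s,Y)\otimes1+1\otimes R^{E}(\partial_s,Y)$. Here the first term vanishes, and the second is invertible at any regular point of the map.

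So this route proves only the inequality: $R_g>n(n-1)$ everywhere is impossible. No $\psi_r$ is parallel and nothing is forced pointwise, so ``the rigidity step then goes through in the same way'' is false. You need an additional limiting argument. For example, pigeonhole over windows of the circle to find windows on which the renormalised gradient and curvature-defect energies tend to $0$ while the mass stays $1$. Then extract, by elliptic estimates and compactness, a \emph{nonzero} parallel limit over $M\times[0,1]$ of the limiting twisted bundle, to which your equality analysis applies. Alternatively, give a genuinely odd-dimensional index argument; the one-line mention of the ``odd index pairing'' does not supply one.

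\textbf{Two corrections in the even case.} First, as written $E=f^*S_{\mathbb{S}^n}$ is ungraded. Since $\operatorname{ch}(S_{\mathbb{S}^n})$ is a polynomial in the vanishing Pontryagin classes of $\mathbb{S}^n$, you get $\operatorname{ch}(E)=2^{n/2}$. Hence $\int_M\hat A(M)\operatorname{ch}(E)=2^{n/2}\hat A(M)$, which does not see $\deg f$ and vanishes for $f=\mathrm{id}_{\mathbb{S}^n}$. You must use the tensor-product grading with $E=f^*S^+\oplus f^*S^-$. Then the index is $\int_M\hat A(M)\big(\operatorname{ch}f^*S^+-\operatorname{ch}f^*S^-\big)=\pm\deg f\cdot\chi(\mathbb{S}^n)$.

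Second, the point you call delicate is not. Since $|\psi|$ is a nonzero constant, the chain
\[
0=\big\langle(\tfrac{R_g}{4}+\mathcal{R}^E)\psi,\psi\big\rangle\geqslant\tfrac14\Big(R_g-\sum_{i\neq j}\mu_i\mu_j\Big)|\psi|^2\geqslant\tfrac14\Big(n(n-1)-\sum_{i\neq j}\mu_i\mu_j\Big)|\psi|^2\geqslant0
\]
already forces $\sum_{i\neq j}\mu_i\mu_j=n(n-1)$. Hence all $\mu_i=1$ for $n\geqslant2$, with no term-by-term analysis of the Clifford operators.
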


Here, $R_g$ denotes the scalar curvature. In recent years, the theorem was
established via different techniques: spinors {\cite{bar-scalar-2024}},
{\cite{li-spectral-2024}}, {\cite{wang-scalar-2025}}, spacetime harmonic
function {\cite{hirsch-rigid-2025}}, $\mu$-bubble {\cite{gromov-four-2023}},
{\cite{hu-rigidity-2023}}, {\cite{cecchini-scalar-2024}}. See also
{\cite{lott-index-2021}}, {\cite{goette-scalar-2002}} and
{\cite{listing-scalar-arxiv-2010}} for generalizations. We shall revisit
{\cite{cecchini-scalar-2024}} and {\cite{listing-scalar-arxiv-2010}} later.

Among many of the works listed here, a Llarull type theorem for warped
products with a log-concave warping factor or \text{{\itshape{bands}}} was
also established. We now introduce the manifold with a band structure. A
\text{{\itshape{band}}} is a manifold $M$ with at least two boundary
components which are re-grouped into two non-empty groups $\partial_- M$ and
$\partial_+ M$. We say that a band is \text{{\itshape{over-spherical}}} if
there is a map $f : M \to \mathbb{S}^n_I : = I \times \mathbb{S}^{n - 1}$ such
that $f (\partial_{\pm} M) = \{t_{\pm} \} \times \mathbb{S}^{n - 1}$. Here,
$I$ is some closed interval $[t_-, t_+]$. We also say that $f$ is
over-spherical. For a hypersurface $\Sigma$ homologous to $\partial_- M$, we
fix the direction of the unit normal of $\Sigma$ to point outside of the
region bounded by $\Sigma$ and $\partial_- M$. In particular, In particular,
we fix $\nu_-$ (\text{{\itshape{resp.}}} $\nu_+$) to be the unit normal of
$\partial_- M$ (\text{{\itshape{resp.}}} $\partial_+ M$) to point inside
(\text{{\itshape{resp.}}} outside) of $M$. And the second fundamental form $A$
and the mean curvature $H$ of a given hypersurface $\Sigma$ are computed using
the choice of the unit normals, that is, $A = \nabla \nu$ and $H
=\ensuremath{\operatorname{tr}}_{\Sigma} (\nabla \nu)$.

Let
\begin{equation}
  g_{\xi} = \mathrm{d} t^2 + \xi (t)^2 g_{\mathbb{S}^{n - 1}} \label{warped
  product}
\end{equation}
on $\mathbb{S}^n_I$ and $\xi : I \to \mathbb{R}$ is a positive function.
Define
\begin{equation}
  h_{\xi} (t) = (n - 1) \tfrac{\xi' (t)}{\xi (t)} \label{h} .
\end{equation}
The log-concavity says that $(\log \xi)'' < 0$.

The Llarull type theorem for the warped products is given in the following:

\begin{theorem}
  \label{llarull band}Let $f$ be an over-spherical spin map from $(M, g)$ to
  $(\mathbb{S}^n_I, g_{\xi})$ such that $R_g \geqslant f^{\ast} R_{g_{\xi}}$
  in $M$, $H_{\partial_+ M} \geqslant h_{\xi} (t_+)$ along $\partial_+ M$ and
  $H_{\partial_- M} \leqslant h_{\xi} (t_-)$ along $\partial_- M$, then $f$ is
  an isometry.
\end{theorem}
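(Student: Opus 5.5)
The plan is to run the spinorial Callias-operator argument of Cecchini--Zeidler (and of \cite{listing-scalar-arxiv-2010}, \cite{goette-scalar-2002}) on the band. Throughout we assume, as the statement implicitly does, that $f$ is distance non-increasing (or at least area non-increasing), $M$ is compact, and $f$ has nonzero degree; $\xi$ is log-concave.

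\textbf{The twisted bundle and Callias operator.} Set $t = t\circ f$ on $M$ and write $\phi := h_\xi(t)/2$. Let $S$ be the spinor bundle of $M$ and $E = f^*S_{\mathbb{S}^{n-1}}$, the pull-back of the spinor bundle of the cross section, with the connection induced from $g_\xi$; in odd dimensions we double as usual. On $S\otimes E$ consider the Callias-type operator
\begin{equation}
  \mathcal{B} = \mathcal{D}_E + \phi\,\sigma ,
\end{equation}
where $\sigma$ is an involution anticommuting with Clifford multiplication. We impose the local boundary condition $\chi\psi = \pm\psi$ on $\partial_\pm M$, with $\chi = \sigma\, c(\nu)$.

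\textbf{Integrated Lichnerowicz formula.} For such $\psi$ one obtains
\begin{equation}
  \int_M |\mathcal{B}\psi|^2 = \int_M |\nabla\psi|^2 + \Big\langle \big(\tfrac{R_g}{4} + \mathcal{R}^E + \phi^2 + \sigma c(\mathrm{d}\phi)\big)\psi,\psi\Big\rangle
  + \tfrac12\int_{\partial M}\big(H \mp 2\phi\big)|\psi|^2 .
\end{equation}
Using the Llarull/Goette--Semmelmann estimate together with distance non-increase, the twisting curvature, combined with the $t$-direction curvature of $g_\xi$, satisfies $\mathcal{R}^E \ge -\frac{(n-1)(n-2)}{4\xi^2}$. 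Moreover $|\mathrm{d}t| \le 1$ and $\phi' < 0$ by log-concavity, so $\sigma c(\mathrm{d}\phi) \ge \phi'(t)$ pointwise. The scalar curvature of $g_\xi$ is
\begin{equation}
  R_{g_\xi} = \frac{(n-1)(n-2)(1-\xi'^2)}{\xi^2} - 2(n-1)\frac{\xi''}{\xi},
\end{equation}
and a direct computation gives $\frac{R_{g_\xi}}{4} = \frac{(n-1)(n-2)}{4\xi^2} - \phi^2 \cdot \frac{n}{n-1} - \phi'$ up to exact bookkeeping. This bookkeeping is where the Penrose-type modified connection $\nabla_X + \frac{1}{n}c(X)\mathcal{B}$ is needed: it converts $|\nabla\psi|^2$ into $\frac1n|\mathcal{B}\psi|^2$ and yields the sharp factor $\frac{n}{n-1}\phi^2$. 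Consequently, with $R_g \ge f^*R_{g_\xi}$ the interior term is $\ge 0$. The boundary signs follow from $H_{\partial_+M} \ge h_\xi(t_+) = 2\phi$ and $H_{\partial_-M} \le h_\xi(t_-)$, given the orientation conventions for $\nu_\pm$.

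\textbf{Existence of a harmonic spinor and rigidity.} The boundary-value problem is Fredholm, and its index equals $\deg f\cdot \chi$-type index of $\mathbb{S}^{n-1}$ (Cecchini--Zeidler), which is nonzero. Hence there is $\psi \ne 0$ with $\mathcal{B}\psi = 0$, and every inequality above becomes an equality. Equality forces $\psi$ to be a twisted Killing-type spinor and gives $|\mathrm{d}t| = 1$, so $t$ is a distance function whose level sets are umbilic with $H = h_\xi(t)$. It also gives equality in the Llarull estimate on each level set, so $f|_{\{t\}}$ is an isometry onto $\xi(t)^2 g_{\mathbb{S}^{n-1}}$ by the Llarull case (Theorem~\ref{std llarull}). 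Together these give $g = \mathrm{d}t^2 + \xi^2 g_{\mathbb{S}^{n-1}}$ and show that $f$ is an isometry.

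\textbf{Main obstacle.} I expect the hardest step to be matching the potential $\phi$ and the curvature estimate so that the interior term is exactly $\frac14(R_g - f^*R_{g_\xi}) \ge 0$; this requires the modified connection trick and the precise constant $\frac{n}{n-1}$. As a first attempt I would choose $\phi = \frac{n}{2(n-1)} h_\xi$-type normalization, adjusted so that the boundary terms match $h_\xi$.
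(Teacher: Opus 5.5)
Your route is not the paper's. The paper does not prove this spin theorem; it quotes it from the literature. Its own argument recovers it for $n=3,4$, without the spin assumption, as the case $\gamma=0$ of Theorem~\ref{spec llarull band}: a stable warped $\mu$-bubble, then Gauss--Bonnet or Listing's theorem on the bubble, then a foliation argument. A Callias-operator proof is a legitimate alternative, but yours has a genuine gap at the index step.

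\textbf{The index vanishes for even $n$.} The index of your boundary problem equals that of the Dirac operator on a separating hypersurface $\Sigma^{n-1}$ twisted by $f_\Sigma^{*}S_{\mathbb{S}^{n-1}}$. For odd $n$ this is $\deg f\cdot\chi(\mathbb{S}^{n-1})=2\deg f\neq 0$. For even $n$, $\Sigma$ is odd-dimensional and the index is zero, consistent with $\chi(\mathbb{S}^{n-1})=0$. Then you obtain no $\psi\neq 0$ and hence no rigidity. In particular you get nothing for $n=4$, the case this paper is about. Doubling only supplies the involution $\sigma$ when $n$ is odd; it creates no index.

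The naive product with a circle does not rescue this either. The sharp constant in your estimate is the Friedrich factor $\tfrac{n}{n-1}$ of $\dim M$, and on $M\times S^1$ it becomes $\tfrac{n+1}{n}$. With the potential $\tfrac{n+1}{2n}h_\xi$ forced by the boundary terms, the zero-order term in the model has a definite deficit $-\tfrac{n^2-1}{4n^2}(\xi'/\xi)^2|\psi|^2$. This is not an $\varepsilon$-error. A Riccati comparison along the $t$-direction shows that no other potential satisfying the boundary inequalities can absorb it.

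This is the same parity restriction as in the Cecchini--Zeidler Callias approach, whose warped-product result needs fibres of non-zero Euler characteristic. The all-dimensional spinorial proof of B\"ar--Brendle--Hanke--Wang uses a genuinely different setup. So either restrict your claim to odd $n$, or supply such an ingredient.

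\textbf{The curvature bookkeeping is wrong as written, but fixable as your last sentence guesses.} With $\mathcal{B}=D_E+\phi\sigma$ and $\phi=h_\xi/2$, your estimates only give the zero-order bound $\tfrac14(R_g-f^{*}R_{g_\xi})-\tfrac{1}{n-1}\phi^2$, which is negative in the model. The connection $\nabla_X+\tfrac1n c(X)\mathcal{B}$ yields nothing, because it equals $\nabla_X$ on solutions of $\mathcal{B}\psi=0$. What works is inserting the Friedrich inequality $|\nabla\psi|^2\ge\tfrac1n|D_E\psi|^2$, with $D_E$ rather than $\mathcal{B}$, into the twisted Lichnerowicz formula. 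This gives
\[ \int_M|\mathcal{B}\psi|^2\ \ge\ \tfrac{n}{n-1}\int_M\big\langle(\tfrac{R_g}{4}+\mathcal{R}^E)\psi,\psi\big\rangle+\int_M(\phi^2-|\mathrm{d}\phi|)|\psi|^2+\int_{\partial M}\big(\tfrac{n}{2(n-1)}H\mp\phi\big)|\psi|^2 . \]
So the potential must be $\phi=\tfrac{n}{2(n-1)}h_\xi\circ t=\tfrac n2(\xi'/\xi)\circ t$. Then $\mathcal{R}^E\ge-\tfrac{(n-1)(n-2)}{4\xi^2}$, $|\mathrm{d}\phi|\le-\phi'$ and your formula for $R_{g_\xi}$ make the interior integrand exactly $\ge\tfrac{n}{4(n-1)}(R_g-f^{*}R_{g_\xi})|\psi|^2$. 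The boundary integrands become $\tfrac{n}{2(n-1)}(H_{\partial_+M}-h_\xi(t_+))$ and $\tfrac{n}{2(n-1)}(h_\xi(t_-)-H_{\partial_-M})$.

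\textbf{The equality case needs adjusting.} Use strict log-concavity together with unique continuation for $\psi$ to get $|\mathrm{d}t|=1$. Then use the algebraic equality case of Llarull's estimate, which forces all singular values of $\mathrm{d}f_\Sigma$ to equal $1/\xi$. Do not invoke Theorem~\ref{std llarull} on level sets: that needs a scalar curvature bound on them, which you have not derived.
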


The $\mu$-bubble proof {\cite{gromov-four-2023}}, {\cite{hu-rigidity-2023}} of
Theorem \ref{llarull band} was previously known to work in three dimensions
with the recent development of Cecchini-Wang-Xie-Zhu
{\cite{cecchini-scalar-2024}} proof of Theorem \ref{std llarull} in dimension
four using Listing's theorem {\cite{listing-scalar-arxiv-2010}} and the Ricci
flow. However, Theorem \ref{llarull band} remains open for higher dimensions
for non-spin bands.

In this article, we settle the four-dimensional case of Theorem \ref{llarull
band} in the non-spin setting, see Theorem \ref{spec llarull band}. In fact,
we can prove a more general theorem which involves the so-called
\text{{\itshape{spectral}}} \text{{\itshape{scalar}}}
\text{{\itshape{curvature}}}.

\begin{definition}
  \label{def spec}Let $\gamma$ be a real number and $(M, g)$ be a Riemannian
  manifold and $u$ be a positive function, we call
  \begin{equation}
    \Lambda (g, u) := - \gamma u^{- 1} \Delta_g u + \tfrac{1}{2} R_g
    \label{spec sc}
  \end{equation}
  the spectral scalar curvature. We also use the convention $\Lambda_{g, u} =
  \Lambda (g, u)$. We omit the reference to $u$ when the context is clear.
\end{definition}

The earliest occurrence of the spectral scalar curvature seems to be in the
work of Schoen-Yau {\cite{schoen-existence-1983}}. More generally, let
$\sigma$ be a real number, we can consider
\begin{equation}
  - \gamma u^{- 1} \Delta_g u + \tfrac{1}{2} R_g + \sigma \gamma u^{- 2} |
  \nabla u|^2 \label{lambda}
\end{equation}
where a special case of \eqref{lambda} is the Perelman or weighted scalar
curvature $P = R_g + 2 \Delta f - | \nabla f|^2$ (with $\sigma = 1 - \gamma /
2$, $f = - \gamma \ln u$), see {\cite{wang-rigidity-2025}} and the references
therein. In fact, \eqref{lambda} is a special case of \eqref{spec sc} because
of the following identity
\[ - u^{- 1} \Delta_g u + \sigma u^{- 2} | \nabla u|^2 = - \tfrac{1}{1 -
   \sigma} u^{- (1 - \sigma)} \Delta_g u^{1 - \sigma} . \]

A spectral Llarull theorem was first established in
{\cite{chai-spectral-2024}} with \eqref{spec sc} bounded below by a positive
constant via spinors and spacetime harmonic functions. There were also similar
results which deals with bounds on \eqref{spec sc}, see
{\cite{chai-band-2025}}, {\cite{hao-llarull-2024}},
{\cite{hirsch-spectral-2024}}, {\cite{zhou-weighted-2025}},
{\cite{chow-scalar-2025}}. Motivated by the warped $\mu$-bubble proof of the
band width estimates {\cite{chai-band-2025}}, in particular for over-torical
bands, we are interested in a non-spin analog of Theorem \ref{llarull band}
for over-spherical bands with spectral scalar curvature
bounds.{\foldedcomment{+1KB5L6S61rv0zAOA}{+1KB5L6S61rv0zAOB}{comment}{bk21}{1766830370}{}{
Theorem \ref{llarull band} was proven by the $\mu$-bubble method in
{\cite{hu-rigidity-2023}} for dimension three. While the band width estimate
for over-torical bands holds up to dimension seven (due to the singularity of
area-minimising hypersurfaces), the $\mu$-bubble proof of Theorem \ref{llarull
band} seems only working well in three dimensions with the exception of
Theorem \ref{std llarull} which was proven by Cecchini-Wang-Xie-Zhu
{\cite{cecchini-scalar-2024}} in dimension four by a combination of Listing's
theorem and the Ricci flow.}}

We are able to show the following Llarull type theorem for bands in three and
four dimensions. Define
\begin{align}
u_{\xi} (t) & = \xi^{\tfrac{1}{2 - \gamma}}, \\
m_{\xi} (t) & = h_{\xi} (t) + \gamma \tfrac{u_{\xi}' (t)}{u_{\xi} (t)} = (n
- 1 + \tfrac{\gamma}{2 - \gamma}) \tfrac{\xi'}{\xi} \label{m}
\end{align}
for \eqref{warped product}. We require that $m_{\xi}' (t) < 0$ on $[t_-,
t_+]$; $\gamma < 4$ when $n = 3$ and $\gamma < 3$ when $n = 4$.

\begin{theorem}
  \label{spec llarull band}Let $n = 3$ or 4, $(M, g)$ be an $n$-dimensional
  oriented band, $f : (M, g) \to (\mathbb{S}^n_I, g_{\xi})$ be a smooth map of
  non-zero degree which does not increase the distance and $f (\partial_{\pm}
  M) \subset f (\partial_{\pm} \mathbb{S}^n_I)$. In addition, there exists a
  positive function $u$ such that the following curvature bounds
\begin{enumerate}[(a)]
    \item $\label{spectral sc}$ $\Lambda_{g, u} \geqslant f^{\ast} \Lambda
    (\xi, u_{\xi})$ in $(M, g)$,
    
    \item $H_{\partial_+ M} + \gamma u^{- 1} \langle \nabla u, \nu_+ \rangle
    \geqslant m_{\xi} (t_+)$ on $\partial_+ M$ and $H_{\partial_- M} + \gamma
    u^{- 1} \langle \nabla u, \nu_- \rangle \leqslant m_{\xi} (t_-)$ on
    $\partial_- M$,
  \end{enumerate}
  hold, then $f$ is an isometry and $u$ is a constant multiple of $u_{\xi}$.
\end{theorem}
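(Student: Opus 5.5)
We will use a warped $\mu$-bubble argument. Write $\phi = t\circ f : M \to [t_-,t_+]$. Since $f$ does not increase distance, $\phi$ is $1$-Lipschitz; after smoothing we may assume $|\nabla \phi| \leqslant 1 + \epsilon$. We then minimize, among Caccioppoli sets $\Omega$ containing a neighbourhood of $\partial_- M$, the warped functional
\[ \mathcal{A}(\Omega) = \int_{\partial^\ast \Omega \setminus \partial_- M} u^{\gamma}\, \mathrm{d}\mathcal{H}^{n-1} - \int_{\Omega} m_{\xi}(\phi)\, u^{\gamma}\, \mathrm{d}\mathcal{H}^n . \]
The boundary inequalities (b) act as barriers. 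On $\partial_+M$ we have $H+\gamma u^{-1}u_\nu \geqslant m_\xi(t_+)$, and on $\partial_-M$ we have $\leqslant m_\xi(t_-)$. If these are strict (which we arrange by a small perturbation of $m_\xi$, using $m_\xi'<0$), they keep the minimizer $\Sigma=\partial\Omega$ off the boundary. In dimensions $n\leqslant 4$ the minimizer is smooth. Its first variation gives
\[ H_\Sigma + \gamma u^{-1}\langle\nabla u,\nu\rangle = m_\xi(\phi) . \]
The second variation gives, for all $\psi$,
\[ \int_\Sigma u^\gamma\big(|\nabla_\Sigma\psi|^2 - (|A|^2+\mathrm{Ric}(\nu,\nu) + \gamma u^{-1}\nabla^2u(\nu,\nu)-\gamma u^{-2}u_\nu^2 + m_\xi'\langle\nabla\phi,\nu\rangle)\psi^2\big) \geqslant 0 . \]

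Next we rewrite the stability inequality using the Schoen--Yau rearrangement $\mathrm{Ric}(\nu,\nu)+|A|^2 = \tfrac12(R_g - R_\Sigma + |A|^2 + H^2)$, together with $\Delta_g u = \Delta_\Sigma u + \nabla^2u(\nu,\nu)+Hu_\nu$. We then substitute $\psi = u^{-\gamma/2}\varphi$ (or use $u^{\gamma/2}$ as a weight) and apply the first variation equation. After completing squares, this yields a spectral-type inequality on $\Sigma$:
\[ \int_\Sigma \tfrac{4}{4-\gamma'}|\nabla_\Sigma \varphi|^2 + \tfrac12 R_\Sigma \varphi^2 \geqslant \int_\Sigma \Big(\Lambda_{g,u} + \tfrac{n}{2(n-1)} m_\xi^2\cdot c_{n,\gamma} + m_\xi'(\phi)\langle\nabla\phi,\nu\rangle\Big)\varphi^2 . \]
The constraints $\gamma<4$ (for $n=3$) and $\gamma<3$ (for $n=4$) are exactly what keep the coefficient of the $|\nabla u|^2$ cross terms nonnegative after Cauchy--Schwarz, using $\tfrac{H^2}{n-1}\leqslant |A|^2$. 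Now plug in (a), use $m_\xi'<0$ and $\langle\nabla\phi,\nu\rangle\leqslant 1$, and use the model identity. In the model, $\Lambda(\xi,u_\xi)$ is computed so that the warped sphere slices of $g_\xi$ are $\mu$-bubbles with equality; concretely $\Lambda(\xi,u_\xi) + \tfrac{n}{2(n-1)}(\cdots) + m_\xi' = \tfrac{(n-1)(n-2)}{2\xi^2}$, which is the scalar curvature term of $\xi^2 g_{\mathbb{S}^{n-1}}$. The conclusion is that $\Sigma$ carries the spectral bound $-\tfrac{4}{4-\gamma'}\Delta_\Sigma + \tfrac12 R_\Sigma \geqslant \tfrac{(n-1)(n-2)}{2}\xi(\phi)^{-2}$.

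The final step, which I expect to be the main obstacle, is a Llarull-type rigidity on the $(n-1)$-dimensional slice. The map $f|_\Sigma$ composed with projection to $\mathbb{S}^{n-1}$ has nonzero degree, since $\Sigma$ is homologous to $\partial_-M$. The metric comparison gives $g_\Sigma \geqslant \xi(\phi)^2 f^\ast g_{\mathbb{S}^{n-1}}$, and the slice $\phi$ is not constant a priori. For $n=3$, $\Sigma$ is a surface, and we will use Gauss--Bonnet with the spectral bound (test $\varphi=1$) together with an area comparison: the area of $\Sigma$ is at least $\int\xi(\phi)^2 f^\ast dA_{\mathbb{S}^2} \geqslant 4\pi\min\xi^2$. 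To handle the varying $\xi(\phi)$, we first choose the weight to localize the slice and use monotonicity of $m_\xi$ in a foliation/continuity argument. For $n=4$, $\Sigma$ is a $3$-manifold, and we need a spectral Llarull theorem in dimension three. We would attempt the approach of Cecchini--Wang--Xie--Zhu: pass the spectral bound to a conformal metric $\tilde g=w^{2}g_\Sigma$ (where $w$ is the first eigenfunction) with $R_{\tilde g}\geqslant 6$ on the $3$-sphere. Then Listing's theorem, or the three-dimensional $\mu$-bubble Llarull of Hu--Liu--Shi applied again, forces equality. Equality at every step then propagates back: $A=\tfrac{H}{n-1}g$, $\nabla\phi=\nu$, the second variation kernel is nontrivial, and an ODE/foliation argument with the constant-$u$-ratio shows that the level sets of $\phi$ are equidistant umbilic spheres with $g=g_\xi$ and $u/u_\xi$ constant. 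Hence $f$ is an isometry.
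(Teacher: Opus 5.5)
Your outline follows the paper's route: the same functional with $\mu=m_\xi\circ\operatorname{pr}_I\circ f$, the Schoen--Yau rewrite, the model identity giving $W\leqslant-\tfrac12(n-1)(n-2)\xi(\phi)^{-2}$, Gauss--Bonnet for $n=3$ and Listing for $n=4$. However, the two steps that actually produce rigidity are not correctly carried out.

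\textbf{The slice step.} For $n=3$ no localization is needed. Your bound $|\Sigma|\geqslant4\pi\min\xi^2$ cannot close the argument when $\phi$ varies on $\Sigma$. Testing the rewritten stability inequality with the constant function gives $2\pi\chi(\Sigma)\geqslant\int_\Sigma\xi(\phi)^{-2}\,\mathrm{d}A_g$. On the other hand, $g\geqslant f^*g_\xi$ gives pointwise $\mathrm{d}A_g\geqslant\xi(\phi)^2|f_\Sigma^*\mathrm{d}A_{\mathbb{S}^2}|$. The weight therefore cancels pointwise, the right side is $\geqslant4\pi|\deg f_\Sigma|$, and all inequalities are equalities.

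For $n=4$ the pointwise analogue is $\|\mathrm{d}f_\Sigma\|^2\leqslant\xi(\phi)^{-2}$, which gives $\lambda_1\bigl(-\tfrac{4}{4-\gamma}\Delta_\Sigma+\tfrac12(R_\Sigma-6\|\mathrm{d}f_\Sigma\|^2)\bigr)\geqslant0$. Three corrections are then needed:
\begin{enumerate}
\item The conformal factor must be $v^{4/(4-\gamma)}$, with $v$ the positive eigenfunction; your $w^2$ works only when $\gamma=2$. The leftover $|\nabla\log v|^2$ term has the right sign because $\gamma<3$.
\item The outcome is $R_{\tilde g}\geqslant6\|\mathrm{d}f_\Sigma\|^2_{\tilde g}$, not $R_{\tilde g}\geqslant6$.
\item Since $f_\Sigma$ need not be $1$-Lipschitz for $\tilde g$ and $\Sigma$ is not known to be a sphere, a constant-bound Llarull theorem (Llarull, Hu--Liu--Shi) does not apply. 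Listing's theorem with its rigidity clause is exactly the right tool: it forces $\lambda_1=0$, $v$ constant and $f_\Sigma$ a homothety. No Ricci flow is needed.
\end{enumerate}

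\textbf{The global step (the more serious gap).} Equality is forced only on the stable bubble $\Sigma$. There you get $A^0=0$, $\nabla^\Sigma u=0$, $u^{-1}u_\nu=\mu/(2(n-1)-(n-2)\gamma)$, $\nabla\phi=\nu$ and $\Lambda_{g,u}=f^*\Lambda(\xi,u_\xi)$. Nothing forces equality on other level sets of $\phi$, and ``equality propagates back'' plus an unspecified ODE does not supply it. The paper's mechanism has three parts:
\begin{enumerate}
\item The identities above kill every zeroth-order term, so the linearization of $\eta=H+\gamma u^{-1}u_\nu-\mu$ at $\Sigma$ is $-\Delta_\Sigma$. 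The implicit function theorem then yields a foliation $\{\Sigma_t\}$ with $\eta(t)$ constant on each leaf.
\item Every leaf still maps to $\mathbb{S}^{n-1}$ with nonzero degree. The same Gauss--Bonnet/Listing argument therefore shows that the principal eigenvalue of $\tilde L_t=-\tfrac{4}{4-\gamma}\Delta_{\Sigma_t}+\tfrac12\bigl(R_{\Sigma_t}-(n-1)(n-2)\xi(\phi)^{-2}\bigr)$ is $\leqslant0$. Dividing $\eta'(t)=\mathcal{L}_{\mu,t}\phi_t$ by $\phi_t$ and integrating against the square of that eigenfunction gives $\tfrac{\mathrm{d}}{\mathrm{d}t}\bigl(\eta(t)e^{\int^tQ}\bigr)\leqslant0$. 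Hence $\eta\leqslant0$ for $t>0$ and $\eta\geqslant0$ for $t<0$.
\item Then $\tfrac{\mathrm{d}}{\mathrm{d}t}E(\Omega_t)=\eta(t)\int_{\Sigma_t}u^\gamma\phi_t$ gives $E(\Omega_t)\leqslant E(\Omega)$. So every $\Omega_t$ is again a minimiser, each leaf is rigid by the slice step, and an open--closed argument foliates all of $M$.
\end{enumerate}
It is this minimizing property of the neighbouring regions, not equality on $\Sigma$ alone, that spreads rigidity.

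\textbf{Smaller points.} In your second variation the signs of $\gamma u^{-1}\nabla^2u(\nu,\nu)$ and $\gamma u^{-2}u_\nu^2$ are reversed. As written you would obtain $\tfrac12R_g+\gamma u^{-1}\Delta_gu$, not $\Lambda_{g,u}$. Also, smoothing $\phi$ to $|\nabla\phi|\leqslant1+\epsilon$ is unnecessary, since $f$ is smooth. Both that smoothing and perturbing $m_\xi$ to make the barriers strict destroy the exact bound on $W$ that every equality conclusion uses. You would therefore need a limiting argument you do not give. The paper instead works directly with a nontrivial minimiser under the non-strict barriers.
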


\begin{remark}
  When $\gamma = 2$, the definition of $u_{\xi}$ might have a minor issue
  which can be easily addressed by replacing $u_{\xi}$ by $u (t)$, $\xi$ by a
  constant and $m_{\xi}$ by $(2 (n - 1) - (n - 2) \gamma) u' / u$ in Theorem
  \ref{spec llarull band}.
\end{remark}

\begin{remark}
  Note that also the warping factor in \eqref{warped product} is not
  necessarily log-concave, instead the condition is $m_{\xi}' < 0$, and $\xi$
  can be log-convex if $\gamma > 2$.
\end{remark}

Our proof is based on the warped $\mu$-bubble method, which is a
generalization of the $\mu$-bubble method and it recently was used in the
resolution of several important problems together with the spectral version of
various curvatures, see
{\cite{chodosh-generalized-2024,chodosh-stable-2023,chodosh-stable-2024}}. We
use the Gauss-Bonnet theorem when $n = 3$ and Listing's Theorem \ref{thm
listing} when $n = 4$. Listing's theorem which we now recall is stronger than
Llarull's Theorem \ref{std llarull}.

\begin{theorem}[{\cite{listing-scalar-arxiv-2010}}]
  \label{thm listing}If $f : (M^n, g) \to (\mathbb{S}^n, g_{\mathbb{S}^n})$ is
  a smooth spin map of non-zero degree such that
  \[ R_g (p) \geqslant n (n - 1) \| \mathrm{d} f_p \|^2 \text{ for all } p \in
     M, \]
  then there exists a constant $c > 0$ such that $f : (M, c g) \to
  (\mathbb{S}^n, g_{\mathbb{S}^n})$ is an isometry. In particular, when $n =
  3$ (since all 3-manifolds are spin).
\end{theorem}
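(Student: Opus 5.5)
The plan is to follow the twisted Dirac operator method of Llarull and Goette--Semmelmann, keeping track of the pointwise factor $\|\mathrm{d} f_p\|^2$ instead of a global bound. I assume $M$ is closed and connected, which is the setting of Theorem \ref{thm listing}.

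\textbf{Index and Lichnerowicz step.} First assume $n$ is even. Let $E$ be the complex spinor bundle of $(\mathbb{S}^n, g_{\mathbb{S}^n})$ with its Levi-Civita connection, and let $S$ be the spinor bundle of $M$. Consider the Dirac operator $D_E$ on $S \otimes f^{\ast} E$. By the Atiyah--Singer index theorem, $\operatorname{ind} D_E$ is a nonzero multiple of $\deg f \cdot \chi (\mathbb{S}^n) \neq 0$, so there is a nontrivial harmonic section $\psi$. The Lichnerowicz formula reads
\[ D_E^2 = \nabla^{\ast} \nabla + \tfrac{1}{4} R_g + \mathcal{R}^E . \]
Llarull's algebraic estimate, written in terms of the singular values $\mu_1, \ldots, \mu_n$ of $\mathrm{d} f_p$, gives
\[ \mathcal{R}^E \geqslant - \tfrac{1}{4} \textstyle\sum_{i \neq j} \mu_i \mu_j \geqslant - \tfrac{1}{4} n (n - 1) \| \mathrm{d} f_p \|^2 . \]
Integrating $\langle D_E^2 \psi, \psi \rangle = 0$ and using the hypothesis $R_g (p) \geqslant n (n - 1) \| \mathrm{d} f_p \|^2$ then forces $\nabla \psi = 0$. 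Hence $|\psi|$ is a nonzero constant, and both inequalities are equalities at every point.

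\textbf{Odd $n$.} For odd $n$ I would reduce to the even case by the standard suspension trick. Pass to $M \times \mathbb{S}^1_r$ with $r$ large, and compose $f \times \mathrm{id}$ with a distance non-increasing, degree-one smash map onto $\mathbb{S}^{n + 1}$. The extra curvature deficit is $O (r^{- 1})$, so letting $r \to \infty$ recovers the same pointwise equalities in the limit. This limiting step needs some care, but it is the usual argument.

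\textbf{From equality to conformality.} Equality in Llarull's estimate forces $\mu_1 = \cdots = \mu_n = \lambda (p) := \| \mathrm{d} f_p \|$ at each point, and also $R_g = n (n - 1) \lambda^2$. Thus $f^{\ast} g_{\mathbb{S}^n} = \lambda^2 g$, so $f$ is weakly conformal.

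\textbf{Main obstacle: $\lambda$ never vanishes.} Since $\deg f \neq 0$, $\lambda \not\equiv 0$, and the set $U = \{ \lambda > 0 \}$ is open and nonempty. I would show $U$ is closed using the parallel section. On $U$, the equation $\nabla \psi = 0$ ties the connection on $f^{\ast} E$ to the spin connection of $g$. Comparing curvatures then gives an identity of the form $\lambda^2 \cdot (\text{Clifford curvature of } \mathbb{S}^n)$ acting on $\psi$ equal to the curvature of $g$ acting on $\psi$. From this I expect $\lambda$ to be locally constant on $U$, by the computation below. Continuity then makes $\lambda$ bounded away from zero on $\bar{U}$, so $U = M$ by connectedness. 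If the parallel-section argument proves awkward, the fallback is the conformal equation itself. Writing $\lambda = e^{- \varphi}$ turns it into a semilinear elliptic equation, and a unique-continuation or Harnack argument near $\partial U$ should show that $\lambda$ cannot tend to zero at a boundary point of $U$.

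\textbf{Constancy of $\lambda$.} On $M$ we have $g = e^{2 \varphi} \hat{g}$ with $\hat{g} = f^{\ast} g_{\mathbb{S}^n}$ and $\varphi = - \log \lambda$. Here $\hat{g}$ is a genuine metric locally isometric to the round sphere. The conformal transformation law for scalar curvature gives
\[ R_g = e^{- 2 \varphi} \bigl( n (n - 1) - 2 (n - 1) \hat{\Delta} \varphi - (n - 1) (n - 2) | \hat{\nabla} \varphi |^2_{\hat{g}} \bigr) . \]
Setting this equal to $n (n - 1) e^{- 2 \varphi}$ yields
\[ 2 \hat{\Delta} \varphi + (n - 2) | \hat{\nabla} \varphi |^2 = 0 . \]
On the closed manifold $M$, the strong maximum principle applied at a maximum of $\varphi$ then shows $\varphi$ is constant. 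Setting $c = \lambda^2$, the map $f : (M, c g) \to (\mathbb{S}^n, g_{\mathbb{S}^n})$ is a local isometry. A local isometry from a closed manifold onto the simply connected sphere is a covering and hence an isometry when $n \geqslant 2$. The case $n = 3$ needs no separate spin hypothesis, since every orientable 3-manifold is spin.
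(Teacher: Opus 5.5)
The paper gives no proof of this statement. It quotes it from Listing and uses it as a black box, most importantly with $n=3$ for the leaves in Proposition~\ref{local rigidity}. Your framework (twisted Dirac operator, index theorem, Lichnerowicz formula, Llarull's bound $\mathcal{R}^E\geqslant-\tfrac{1}{4}\sum_{i\neq j}\mu_i\mu_j$) is the standard one. For even $n$ your equality analysis correctly gives $R_g=n(n-1)\lambda^2$ and $f^{\ast}g_{\mathbb{S}^n}=\lambda^2 g$ pointwise. Two steps, however, are not established.

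\textbf{Nonvanishing of $\lambda$.} As written this step is circular. You claim $\lambda$ is locally constant on $U=\{\lambda>0\}$ ``by the computation below''. That computation only yields, on $U$, your equation $2\hat\Delta\varphi+(n-2)|\hat\nabla\varphi|^2=0$, i.e.\ $e^{(n-2)\varphi/2}$ is $\hat g$-harmonic, and this equation has many nonconstant local solutions. You also apply the maximum principle ``at a maximum of $\varphi$ on the closed manifold $M$''. That presupposes $\varphi<\infty$ everywhere, i.e.\ $U=M$, which is what you are trying to prove. The fallback is not justified either: Harnack and unique continuation control a positive solution only on compact subsets of its domain, not up to $\partial U$.

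The repair is to use the other extremum.
\begin{enumerate}[(i)]
\item $\lambda=\|\mathrm{d}f\|$ is continuous and $\not\equiv 0$, so its maximum is positive and attained at some $p\in U$. There $\varphi$ attains its minimum.
\item The strong maximum principle for the $\hat g$-harmonic function $e^{(n-2)\varphi/2}$ (for $\varphi$ itself if $n=2$) makes $\varphi$ constant on the component $U_0$ of $U$ containing $p$.
\item By continuity $\lambda\equiv\max\lambda>0$ on $\overline{U_0}$, so $U_0$ is open and closed, hence $U_0=M$.
\end{enumerate}
This gives nonvanishing and constancy at once, and the parallel-section curvature comparison becomes unnecessary.

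\textbf{Odd $n$.} This is the case the paper actually needs ($n=3$). Here your argument relies on a parallel section on $M$ that the suspension does not provide. You only get harmonic spinors $\psi_r$ on $M\times\mathbb{S}^1_r$ that are almost parallel, and ``recovers the same pointwise equalities in the limit'' is asserted rather than proved. With the repair above you only need the two pointwise identities, and these can be extracted as follows.
\begin{enumerate}[(i)]
\item Use a degree-one map $\mathbb{S}^n\times\mathbb{S}^1_r\to\mathbb{S}^{n+1}$ that is $1$-Lipschitz on the $\mathbb{S}^n$-slices and $O(1/r)$-Lipschitz in the circle direction.
\item Work with Llarull's sum, not with the hypothesis in the form $R\geqslant n(n+1)\|\mathrm{d}F\|^2$, which fails on $M\times\mathbb{S}^1_r$. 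The sum for $F$ exceeds $\sum_{i\neq j}\mu_i\mu_j(\mathrm{d}f)$ by only $O(1/r)$.
\item Suppose $R_g-\sum_{i\neq j}\mu_i\mu_j\geqslant\delta>0$ on a ball $B\subset M$. Normalizing $\|\psi_r\|_{L^2}=1$, you get $\int|\nabla\psi_r|^2+\tfrac{\delta}{4}\int_{B\times\mathbb{S}^1_r}|\psi_r|^2\leqslant C/r$.
\item Kato's inequality and the Poincar\'e inequality on the fixed slices $M\times\{\theta\}$ give $\int_{B\times\mathbb{S}^1_r}|\psi_r|^2\geqslant c>0$ for all large $r$. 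This contradicts (iii).
\end{enumerate}
Hence $R_g=\sum_{i\neq j}\mu_i\mu_j\leqslant n(n-1)\lambda^2\leqslant R_g$ everywhere. This forces all singular values to coincide and $R_g=n(n-1)\lambda^2$, after which the maximum-principle argument above applies unchanged.
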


Our use of Listing's theorem in four-dimensional Llarull type theorem is
different from {\cite{cecchini-scalar-2024}}, more specifically, we are able
to avoid the Ricci flow by exploiting the full rigidity statement of Listing's
Theorem \ref{thm listing}, see Propositions \ref{local rigidity} and \ref{sign
of leaf}.
{\foldedcomment{+YwZVIEztzfSaNy}{+YwZVIEztzfSaNz}{comment}{bk21}{1772019191}{}{\begin{remark}
  It should be quite straightforward to extend our method to some of the
  boundary cases, in particular the rotationally symmetric case with barriers,
  see {\cite{chai-scalar-2023}} (cf. {\cite{ko-scalar-2024}},
  {\cite{wang-scalar-mean-2024}}), see also a recent more general version by
  Ko-Yao {\cite{ko-capillary-2026}} who developed the capillary minimal
  slicing technique. 
\end{remark}}}We also showed a simple generalization of Theorem \ref{thm
listing} in Theorem \ref{spectral listing theorem} which recovers a special
case obtained for the Perelman scalar curvature by Zhou-Zhu
{\cite{zhou-weighted-2025}} via spinor methods.

As Llarull's Theorem \ref{std llarull} is the most important case, we also
need to develop a strategy. To this end, we need a new setup for the function
$\xi$ appearing in the warped product \eqref{warped product}. Now we assume
that $t_- = 0$, \ $\xi$ is positive on $(t_-, t_+]$ and $\xi (t) = A (t - t_-)
+ O (|t - t_- |^2)$ near $t_-$ for some positive constant $A$. Topologically,
$\{t_- \} \times \mathbb{S}^{n - 1}$ is a conical point $p_{\xi}$ in
$\mathbb{S}^n_I$. Now we setup $M$ such that it has similar structures. We
assume that $M$ is a compact metric space such that $M\backslash (\cup_i \{p_i
\})$ is a smooth Riemannian manifold with the Riemannian metric $g$, and $M$
is the metric completion of $(M\backslash \cup_i \{p_i \}, g)$. Let the
resulting metric be $d_g$ and we call $p_i$ a singular point of $M$. We assume
that near $p_i$, the metric $g$ is of the form $\mathrm{d} t^2 + t^2 g_S +
g_1$ for some closed Riemannian manifold $S$ with the Riemannian metric $g$
for some small (compared to $\mathrm{d} t^2 + t^2 g_S$) 2-form $g_1$, and that
$\{t = 0\}$ corresponds to the singular point $p_i$. The limit of the spaces
$\{(M, \lambda^{- 1} d_g, p_i)\}_{\lambda > 0}$ as $\lambda \to 0$ in the
pointed Gromov-Hausdorff distance is called the tangent cone of $M$ at $p_i$.
It is clear that the tangent cone metric is given by $\mathrm{d} t^2 + t^2
g_S$. For the notion of tangent cones, see {\cite[Chapter
8]{burago-course-2001}}.

For simplicity, we assume that $f^{- 1} (p_{\xi})$ exhausts all the conical
points of $(M, g)$. Note that $f^{- 1} (p_{\xi})$ can include smooth points of
$M$, but such points still have the structure described above with $(S, g_S)$
being the standard $(n - 1)$-sphere. Let $p \in f^{- 1} (p_{\xi})$, and we
further assume that the tangent cones of $(M, g)$ at $p$ and of
$(\mathbb{S}^n_I, g_{\xi})$ at $p_{\xi}$ are isometric. Let $\bar{u} (t) =
t^{\tfrac{1}{2 - \gamma}}$, now we assume that $\gamma < 2$, and we require
that the tangent cone metric given by $\bar{g} = \mathrm{d} t^2 + A^2 t^2
g_{\mathbb{S}^{n - 1}}$ satisfies
\begin{equation}
  (n - 2) (A^{- 2} - 1) + \tfrac{2 (n - 1)}{2 - \gamma} \geqslant 0.
  \label{unit cross-section size}
\end{equation}
\begin{theorem}
  \label{llarull cone theorem}Let $n = 3$ or $4$, and let $f$, $(M, g)$ and
  $(\mathbb{S}^n_I, g_{\xi})$ be given as above, if there exists a positive
  function $u$ on $M\backslash f^{- 1} (p_{\xi})$ which satisfies
\begin{enumerate}[(a)]
    \item $\Lambda_{g, u} \geqslant f^{\ast} \Lambda (g_{\xi}, u_{\xi})$ in
    $(M, g)$,
    
    \item $H_{\partial_+ M} + \gamma u^{- 1} \langle \nabla u, \nu_+ \rangle
    \geqslant m_{\xi} (t_+)$ on $\partial_+ M$,
    
    \item $u = t^{\alpha} v (x) + O (t^{1 + \alpha})$ for some $\alpha \in
    \mathbb{R}$ near $t = 0$,
  \end{enumerate}
  then $f$ is an isometry and $u$ is a constant multiple of $u_{\xi}$.
\end{theorem}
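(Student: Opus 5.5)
The plan is to reduce Theorem \ref{llarull cone theorem} to the warped $\mu$-bubble argument behind Theorem \ref{spec llarull band}. The conical points play the role of the missing boundary $\partial_- M$; the main work is to show that no barrier condition is needed there.

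For small $\epsilon>0$, let $M_\epsilon = M\setminus \bigcup_i B_\epsilon(p_i)$, where $B_\epsilon(p_i)=\{t<\epsilon\}$ in the conical coordinates near $p_i$, and set $\partial_- M_\epsilon=\bigcup_i\{t=\epsilon\}$. Then $M_\epsilon$ is a smooth band, and $f$ restricted to it is a degree-nonzero, distance non-increasing map onto a region containing $[\epsilon', t_+]\times\mathbb{S}^{n-1}$.

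First I would compute the left side of condition (b) of Theorem \ref{spec llarull band} on $\partial_- M_\epsilon$ using hypothesis (c) and the cone metric. Since $g=\mathrm{d}t^2+t^2g_S+g_1$, the mean curvature of $\{t=\epsilon\}$ with respect to $\nu_-=\partial_t$ is $(n-1)/\epsilon+o(1/\epsilon)$. Since $u=t^\alpha v+O(t^{1+\alpha})$, we get $\gamma u^{-1}\partial_t u=\gamma\alpha/\epsilon+O(1)$. Hence
\[ H_{\partial_-M_\epsilon}+\gamma u^{-1}\langle\nabla u,\nu_-\rangle=\frac{n-1+\gamma\alpha}{\epsilon}+o(\epsilon^{-1}). \]
On the model side, $\xi=At+O(t^2)$ gives
\[ m_\xi(\epsilon)=\Bigl(n-1+\tfrac{\gamma}{2-\gamma}\Bigr)\epsilon^{-1}+O(1). \]
Condition (b) at $t_-$ is an upper bound, and it does not come for free, so I would not impose it. Instead, I would run the warped $\mu$-bubble minimization for the functional
\[ \mathcal{A}(\Omega)=\int_{\partial^*\Omega}u^\gamma-\int_\Omega m_\xi(t\circ f)\,u^\gamma, \]
and rule out minimizers degenerating into the cone points by a separate argument.

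Near $p_i$ I would compare with the model cone $(\bar g,\bar u)$. The quantity \eqref{unit cross-section size} is exactly what makes small cone slices $\{t=\delta\}$ \emph{stable barriers}: a direct computation shows that the first variation of $\mathcal{A}$ across $\{t=\delta\}$ has a sign governed by $(n-2)(A^{-2}-1)+\tfrac{2(n-1)}{2-\gamma}\geqslant 0$. Here $(n-2)A^{-2}$ comes from the curvature of the cross-section $A^2g_{\mathbb{S}^{n-1}}$, and the term $\tfrac{2(n-1)}{2-\gamma}$ comes from the weight $\bar u=t^{1/(2-\gamma)}$. Since $\gamma<2$, the integral of $m_\xi u^\gamma$ near the tip is also finite for the relevant $\alpha$. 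The exponent $\alpha$ is not prescribed, so I would first show that (a) forces $\alpha\geqslant 1/(2-\gamma)$ or the corresponding $\alpha$ giving the same sign. To do this, insert the expansion into $-\gamma u^{-1}\Delta u+\tfrac12R_g\geqslant f^*\Lambda$ and compare the $t^{-2}$ coefficients; the tangent-cone isometry hypothesis fixes the scalar-curvature leading term.

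With this in hand, a minimizing sequence stays in $\{t\geqslant\delta\}$, and a minimizer $\Sigma$ exists in $M_\delta$ and is homologous to the slices. I would then apply the stability inequality and use the Gauss--Bonnet theorem for $n=3$ and Theorem \ref{thm listing} for $n=4$, exactly as in the proof of Theorem \ref{spec llarull band}. This yields equality throughout, hence the local rigidity of Propositions \ref{local rigidity} and \ref{sign of leaf}: every leaf is umbilic, $u$ depends only on $t$, and the foliation sweeps out $M\setminus\bigcup p_i$. Therefore $f$ is an isometry away from the cone points. By completeness and the tangent-cone assumption it is an isometry of the metric completions, and $u=cu_\xi$.

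The main obstacle is the barrier step near the cone points. Specifically, one must show that hypothesis (c), together with (a) and \eqref{unit cross-section size}, makes the $\mu$-bubble functional prefer not to collapse into $p_i$ for every admissible $\alpha$. My first attempt would be a blow-up comparison: rescale by $\lambda^{-1}$ around $p_i$ and reduce to the exact cone $(\bar g,\bar u)$. There, the first and second variations of $\mathcal{A}$ on slices $\{t=\delta\}$ can be computed explicitly, and the inequality \eqref{unit cross-section size} should emerge as the nonnegativity condition.
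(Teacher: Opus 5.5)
There is a genuine gap, and it sits at the step you yourself flag as the main obstacle: nothing in your argument makes the slices $\{t=\delta\}$ barriers, and the mechanism you propose cannot do so.

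Write $\bar\alpha=\tfrac{1}{2-\gamma}$. On the coordinate slices,
\[
\eta=H+\gamma u^{-1}u_\nu-\mu=\gamma(\alpha-\bar\alpha)t^{-1}+O(1).
\]
So the slices are barriers at leading order only if $\alpha<\bar\alpha$; the inequality $\alpha\geqslant\bar\alpha$ you aim for points the wrong way. Moreover, comparing $t^{-2}$ coefficients in (a) and averaging over the cross-section (for $\gamma>0$) gives an \emph{upper} bound: $\alpha(\alpha+n-2)$ plus the average of $|\nabla\log v|^2$ is at most $\bar\alpha(\bar\alpha+n-2)$.

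The paper excludes the strict case $\alpha<\bar\alpha$ globally. In that case the small slices would be strict barriers, and Theorem \ref{spec llarull band} applied outside one of them would contradict strictness. Then (a) together with Theorem \ref{spectral listing theorem} forces $\alpha=\bar\alpha$ and $v$ constant.

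In this only surviving case the $t^{-1}$ terms cancel exactly. What remains of $\eta$ on the slices is an $O(1)$ quantity governed by $g_1$, the remainder in $u$, and the next-order behaviour of $\xi$ and $f$, and it has no pointwise sign. Your blow-up to the exact cone $(\bar g,\bar u)$ discards precisely this term. On the exact cone every slice has $\eta\equiv 0$ and linearization $-\Delta_\Sigma$, so the first variation vanishes and the second is neutral. Hence \eqref{unit cross-section size} cannot emerge from that computation. In any case, stability is not what keeps minimizers off the tip; that requires a sign of $\eta$ on an enclosing hypersurface. So ``a minimizing sequence stays in $\{t\geqslant\delta\}$'' is unsupported.

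What is missing is the paper's two-step mechanism.

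\emph{Step 1.} Perturb the slices by the implicit function theorem in the rescaled picture $(\hat g_t,\hat u_t)$, where the linearization is $-\bar\Delta_1$ on mean-zero functions. This produces leaves on which $\eta$ is \emph{constant}, so a sign for a weighted average of $\eta$ becomes a pointwise sign.

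\emph{Step 2.} Obtain the sign of that average from the integral identity of Proposition \ref{variational principle}. With weight $\bar u^2$, the first-order (in $t$) discrepancy of $H+\gamma u^{-1}u_\nu$ on $\hat\Sigma_1$ between $(\hat g_t,\hat u_t)$ and the model is traded for two interior terms:
\begin{itemize}
\item $\int\bar u^2\,\delta\Lambda$, which is controlled by (a);
\item $-\tfrac12\int\bar u^2\langle\mathcal R_{\bar g},\delta g\rangle$, which is controlled by $g\geqslant f^*g_\xi$ once $\mathcal R_{\bar g}\geqslant 0$ (Corollary \ref{infinitesimal comparison}).
\end{itemize}
This is where \eqref{unit cross-section size} actually enters: it is the condition $t^2\mathcal R_{\bar g}(e_i,e_i)\geqslant 0$ for the modified Ricci tensor \eqref{mod ric}, not a sign of any slice variation.

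Finally, the ODE argument of Proposition \ref{sign of leaf} propagates $\eta\leqslant 0$ from the limit to every small leaf. Each leaf is then a lower barrier, and Theorem \ref{spec llarull band} with Remark \ref{slightly general} is applied outside a leaf, which is then shrunk.
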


\begin{remark}
  The 2-tensor $\mathcal{R}_{\bar{g}}$ defined in \eqref{mod ric} of the
  metric $\bar{g}$ is non-negative under the assumption of \eqref{unit
  cross-section size}; note that $\bar{g} = \mathrm{d} t^2 + A^2 t^2
  g_{\mathbb{S}^{n - 1}}$ might not have a positive curvature operator since
  $A$ might be greater than 1. On the other hand, \eqref{unit cross-section
  size} is always satisfied for $0 < A \leqslant 1$.
\end{remark}

We believe the assumption of isometric tangent cones are removable, but at the
moment it seems challenging to prove Theorem \ref{llarull cone theorem}
without the isometric tangent cone assumption for general $\gamma \neq 0$
because of $u$. Actually, similar conditions were imposed in
{\cite{chai-spectral-2024}}. For $\gamma = 0$, however, we can remove the
assumption by making use of the distance comparison.

\begin{theorem}
  \label{cone llarull}Let $n = 3$ or $4$, and let $f$, $(M, g)$ and
  $(\mathbb{S}^n_I, g_{\xi})$ be given as above. Assume that $\xi = A t + O
  (t^2)$ with $0 < A \leqslant 1$,
\begin{enumerate}[(a)]
    \item  $R_g \geqslant f^{\ast} R_{g_{\xi}}$ in $(M, g)$,
    
    \item and $H_{\partial_+ M} \geqslant h_{\xi} (t_+)$ on $\partial_+ M$,
  \end{enumerate}
  then $f$ is an isometry.
\end{theorem}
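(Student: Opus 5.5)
The plan is to reduce Theorem \ref{cone llarull} to Theorem \ref{llarull cone theorem} with $\gamma = 0$ and $u \equiv 1$. The only hypothesis of Theorem \ref{llarull cone theorem} not given directly is the isometry of tangent cones; this must now be derived from the distance non-increasing property of $f$ together with $A \leqslant 1$. With $\gamma = 0$ we have $\Lambda_{g,1} = \tfrac12 R_g$, $u_\xi \equiv 1$ and $m_\xi = h_\xi$. Hypotheses (a), (b) therefore translate verbatim, and (c) holds with $\alpha = 0$, $v \equiv 1$. Condition \eqref{unit cross-section size} becomes $(n-2)(A^{-2}-1) + (n-1) \geqslant 0$, which is automatic for $0 < A \leqslant 1$.

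It remains to show that at each $p \in f^{-1}(p_\xi)$ the tangent cone $\mathrm{d}t^2 + t^2 g_S$ of $M$ is isometric to $\mathrm{d}t^2 + A^2 t^2 g_{\mathbb{S}^{n-1}}$. First, rescaling $f$ by $\lambda^{-1}$ and passing to the limit $\lambda \to 0$ (Arzelà–Ascoli for $1$-Lipschitz maps in the pointed Gromov–Hausdorff sense) produces a $1$-Lipschitz map $F$ between the tangent cones. This map sends the vertex to the vertex and has nonzero degree, since the degree is preserved on small spheres around $p$. Second, the cone structure is used. The distance to the vertex is $t$ on both sides, so $F$ satisfies $t(F(x)) \leqslant t(x)$, and by degree $F$ is surjective on each level $\{t = r\}$. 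I then intend to show $t(F(x)) = t(x)$, and hence that $F$ restricted to the link is a distance non-increasing map $\phi : (S, g_S) \to (\mathbb{S}^{n-1}, A^2 g_{\mathbb{S}^{n-1}})$ of nonzero degree. Third, the scalar curvature comparison is applied to the link. Near $p$, hypothesis (a) gives $R_g \geqslant R_{g_\xi} \circ f$. The leading $t^{-2}$ terms are $t^{-2}(R_{g_S} - (n-1)(n-2))$ on $M$ and $t^{-2}(n-1)(n-2)(A^{-2} - 1)$ on the model. Comparing them yields $R_{g_S} \geqslant (n-1)(n-2)A^{-2} = R_{A^2 g_{\mathbb{S}^{n-1}}}$.

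The links have dimension $n-1 \in \{2,3\}$, so Theorem \ref{thm listing} applies, either in its classical Llarull form or via Gauss–Bonnet when $n-1 = 2$; in dimension three every manifold is spin. Since $\phi$ is distance non-increasing, $\|\mathrm{d}\phi\| \leqslant 1$, so the scalar curvature condition of the theorem holds. We conclude that $\phi$ is a homothety, and since $\phi$ is $1$-Lipschitz while $R_{g_S} \geqslant R_{A^2 g_{\mathbb{S}^{n-1}}}$, the homothety constant is $1$. Thus $\phi$ is an isometry, which gives the isometric tangent cones, and Theorem \ref{llarull cone theorem} then yields that $f$ is an isometry.

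The main obstacle is the second step: extracting a genuine link map $S \to \mathbb{S}^{n-1}$ from $F$, together with the radial equality $t \circ F = t$. If this proves awkward, I would instead use the coarea/comparison route. Define the link map by composing $F|_{\{t=1\}}$ with the radial projection onto $\{t = 1\}$. On $\{t = r\}$ this projection is $1/r$-scaling in the cone metric, so one must check that it is still distance non-increasing, using the precise form of the cone distance $d^2 = t_1^2 + t_2^2 - 2t_1t_2\cos(\min(d_{\rm link},\pi))$ to obtain $d_{\rm link}$-contraction. If this fails, a fallback is to run the Gauss–Bonnet/Listing rigidity directly on small $\mu$-bubbles near $p$ rather than on the link.
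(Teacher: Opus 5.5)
Your architecture matches the paper's. You blow up at $p\in f^{-1}(p_\xi)$, get a distance non-increasing map of nonzero degree between the unit cross-sections, let $R_g\geqslant f^\ast R_{g_\xi}$ descend to the link, apply lower-dimensional Llarull/Listing rigidity, and then run the isometric-tangent-cone machinery with constant $u$. Your argument that the homothety constant is $1$ is fine once the link map is $1$-Lipschitz.

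The step you flag yourself, the radial equality $t\circ F=r$ (source radius $r$, target radius $t$), is a genuine gap. It cannot be derived from the tangent-cone data you use: a $1$-Lipschitz, nonzero-degree, vertex-preserving map together with the scalar curvature comparison. Here is a counterexample. Take $A=1$, so the model cone is $\mathbb{R}^n$. Let the source cone be $\mathrm{d}r^2+c^2r^2g_{\mathbb{S}^{n-1}}$ with $0<c<1$, and set $F(r,x)=(cr,x)$. Then:
\begin{itemize}
\item $F^\ast(\mathrm{d}t^2+t^2g_{\mathbb{S}^{n-1}})=c^2(\mathrm{d}r^2+r^2g_{\mathbb{S}^{n-1}})\leqslant \mathrm{d}r^2+c^2r^2g_{\mathbb{S}^{n-1}}$;
\item $F$ has degree one and maps vertex to vertex;
\item the source scalar curvature $(n-1)(n-2)(c^{-2}-1)r^{-2}$ dominates $0$.
\end{itemize}
Yet $t\circ F=cr<r$, and the cones are not isometric.

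The same example defeats your fallback. Radial projection from $\{t=s\}$ to $\{t=1\}$ dilates the link by $1/s$. Here it produces the $1/c$-dilation $(\mathbb{S}^{n-1},c^2g_{\mathbb{S}^{n-1}})\to(\mathbb{S}^{n-1},g_{\mathbb{S}^{n-1}})$, which is not distance non-increasing. The link is round but of the wrong radius, so rigidity only returns a homothety with constant $c\neq 1$.

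The missing ingredient is global, and it is what the paper uses: a barrier argument combining hypothesis (b) with the band rigidity. With $r=d_g(p,\cdot)$ one has $r\geqslant t\circ f$. The geodesic spheres $\Sigma_r$ have $H=\tfrac{n-1}{r}+o(r^{-1})$, while $\mu=h_\xi(t\circ f)=\tfrac{n-1}{t\circ f}+O(1)$. If $r/(t\circ f)$ does not tend to $1$, then (after a small perturbation) $\Sigma_r$ is a strict lower barrier, $H<\mu$, for the band between $\Sigma_r$ and $\partial_+M$. This is impossible by Theorem \ref{spec llarull band} with $\gamma=0$, in the form of Remark \ref{slightly general}, since its rigidity forces $H=\mu$ on the lower boundary. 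Hence $r/(t\circ f)\to1$, i.e.\ $t\circ F=r$ on the tangent cone. Only then is your link map distance non-increasing, and the rest of your argument goes through.

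A minor point: at $\gamma=0$ you may only use the ``$f$ is an isometry'' part of Theorem \ref{llarull cone theorem}. Its claim that $u$ is a constant multiple of $u_\xi=\xi^{1/2}$ is false for $u\equiv1$. The paper accordingly finishes by applying Theorem \ref{spec llarull band} with constant $u$.
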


Our construction is local, and it also applies to the case when $\xi = A_+
(t_+ - t) + O (|t - t_+ |^2)$ for $0 < A_+ \leqslant 1$ near $t = t_+$. So
this reproves Llarull's Theorem \ref{std llarull} in four dimensions by taking
$\xi : [0, \pi] \to \mathbb{R}$ to be $\sin t$. The three dimensional version
of Theorem \ref{cone llarull} already appeared in {\cite[Theorem
1.6]{chai-scalar-2025}}.

\

The article is organized as follows:

In Section \ref{sec basics}, we introduce the warped $\mu$-bubble, compute the
related first and second variation formulas, in particular, we relate the
second variation with the curvature \eqref{lambda}. We also calculate some
model metrics.

In Section \ref{sec band}, we develop a foliation analysis to prove Theorem
\ref{spec llarull band}. In Section \ref{sec cone}, we construct a foliation
near the conical point with its leaf satisfying a certain mean curvature
condition to show Theorem \ref{llarull cone theorem}. In particular, we prove
Theorem \ref{cone llarull}.

\

\text{{\bfseries{Acknowledgement}}} X.C. has been partially supported by the
National Research Foundation of Korea (NRF) grant funded by the Korea
government (MSIT) (No. RS-2024-00337418). X.W. is sponsored by the National
Key R\&D Program of China (Grant No. 2024YFA1013200) and the Natural Science
Foundation of Chongqing, China (Grant No. CSTB2024NSCQ-LZX0040,
CSTB2023NSCQ-LZX0042).

\section{Basics of warped $\mu$-bubbles}\label{sec basics}

In this section, we introduce the warped $\mu$-bubble which is a critical
point of an energy functional related to the weighted area. We compute the
first and second variation of the energy functional, in particular, we relate
the second variation with the curvature \eqref{lambda}.

\subsection{The variational problem}

We assume that $M$ lies in a slightly larger manifold and we move $\partial_-
M$ slightly outward and obtain a resulting band $M_1$. Let $\mathcal{C}_1$ be
the collection of all Caccioppoli sets $\Omega$ such that $\Omega$ contains a
neighborhood of $\partial_- M_1$ and define the warped $\mu$-bubble functional
\begin{equation}
  E (\Omega) = \int_{\partial^{\ast} \Omega \cap
  \ensuremath{\operatorname{int}}M_1} u^{\gamma} \mathrm{d} \mathcal{H}^{n -
  1} - \int_{\Omega} u^{\gamma} \mu \mathrm{d} \mathcal{H}^n, \label{eq E
  band}
\end{equation}
where $\partial^{\ast} \Omega$ denotes the reduced boundary of $\Omega$. Let
$\mathcal{C}$ be the sub-collection of Caccioppoli sets $\Omega$ such that
$\partial^{\ast} \Omega \backslash \partial_- M$ is a subset in the closure of
$M$. We consider the minimisers of $E$ in the class $\mathcal{C}$. In Gromov's
terminology {\cite[Section 1.6.4]{gromov-four-2023}}, a minimiser of $E$ is
called a \text{{\itshape{warped $\mu$-bubble}}}.

Let $\Omega_t$ be a smooth family of Caccioppoli sets, then the first
variation of $E (\Omega_t)$ is given by
\begin{equation}
  \tfrac{\mathrm{d}}{\mathrm{d} t} E (\Omega_t) |_{t = 0} = \int_{\Sigma} (H +
  \gamma u^{- 1} u_{\nu} - \mu) u^{\gamma} \phi \mathrm{d} \mathcal{H}^{n -
  1}, \label{eq first variation}
\end{equation}
where $\Sigma_t = \partial \Omega_t \cap \ensuremath{\operatorname{int}}M$ and
$H$ is the mean curvature of $\Sigma_t$. The quantity $H + \gamma u^{- 1}
u_{\nu}$ is called the \text{{\itshape{$u^{\gamma}$-weighted mean
curvature}}}, and we often omit the dependence on $u^{\gamma}$.

Let $\Omega = \Omega_0$, if $\Sigma := \partial \Omega \cap
\ensuremath{\operatorname{int}}M$ satisfies $H + \gamma u^{- 1} u_{\nu} - \mu
= 0$ along $\Sigma$. We call $\Sigma$ a $\mu$-hypersurface. Then the second
variation of $E (\Omega_t)$ is given by
\begin{equation}
  \tfrac{\mathrm{d}^2}{\mathrm{d} t^2} E (\Omega_t) |_{t = 0} = \int_{\Sigma}
  \delta_Y (H + \gamma u^{- 1} u_{\nu} - \mu) u^{\gamma} \phi \mathrm{d}
  \mathcal{H}^{n - 1} . \label{second variation non-explicit}
\end{equation}
\begin{definition}
  \label{def stable}We say that a warped $\mu$-bubble $\Omega = \Omega_0$ is
  stable if
  \begin{equation}
    \tfrac{\mathrm{d}^2}{\mathrm{d} t^2} E (\Omega_t) |_{t = 0} \geqslant 0.
    \label{eq stable via E}
  \end{equation}
  For convenience, we also say $\Sigma$ is stable if $\Sigma$ is of vanishing
  $H + \gamma u^{- 1} u_{\nu} - \mu$ and the right-hand side of \eqref{second
  variation non-explicit} is non-negative.
\end{definition}

By a direct calculation (see {\cite{antonelli-spectral-2024}},
{\cite{chodosh-stable-2024}}), the first variation of $\eta = H + \gamma u^{-
1} u_{\nu} - \mu$ is given
by{\foldedcomment{+wTScXpEcPNtGrq}{+wTScXpEcPNtGrr}{comment}{bk21}{1766907138}{}{If
$\eta \equiv 0$ along $\Sigma$, denote by $\mathcal{L}_0 \phi = \delta_{\phi
\nu} \eta$ where the more general $\mathcal{L}_{\mu}$ is defined in Lemma
\ref{first variation eta general}.

\begin{definition}
  \label{op stable}We say that a warped $\mu$-hypersurface is
  $\mathcal{L}_0$-stable if the principal eigenvalue of the operator
  $\mathcal{L}_0$ is non-negative.
\end{definition}

Recall that by the Krein-Rutman theorem ({\color{orange}{todo}} ),
$\mathcal{L}_0$ admits an eigenvalue, called the principal eigenvalue, which
has the least real part among all eigenvalues.

\begin{lemma}
  The two stability notions in Definition \ref{def stable} and \ref{op
  stable} are in fact equivalent. 
\end{lemma}

\begin{proof}
  First, the energy stability implies the $\mathcal{L}_0$-stability: let
  $\phi$ be the principal eigenfunction, so $\mathcal{L}_0 \phi = \lambda_1
  \phi$; clearly, the energy stability $\int_{\Sigma} u^{\gamma} \phi
  \mathcal{L}_0 \phi \geqslant 0$ which clearly gives $\lambda_1 \geqslant 0$.
  
  Secondly, 
\end{proof}}}
\begin{align}
& \delta_{\phi \nu} \eta \label{first var crude} \\
= & - \Delta_{\Sigma} \phi - (\ensuremath{\operatorname{Ric}}(\nu) + |A|^2)
\phi \\
& \quad - \gamma w_{\nu}^2 \phi + \gamma u^{- 1} \phi (\Delta_g u -
\Delta_{\Sigma} u - H u_{\nu}) - \gamma \langle \nabla w, \nabla^{\Sigma}
\phi \rangle - \mu_{\nu} \phi,
\end{align}
where $w = \log u$. We also call $\delta_{\phi \nu} \eta$ to be the
linearisation of $\eta$.

\subsection{Rewrite of the second variation}

Now we write the linearisation of $\eta$ or the second variation of the warped
$\mu$-bubble in a way that is related to \eqref{lambda}.

\begin{lemma}
  \label{first variation eta general}The first variation of $\eta = H + \gamma
  u^{- 1} u_{\nu} - \mu$ is $\delta_{\phi \nu} \eta =\mathcal{L}_{\mu} \phi$,
  where given a function $f$ on $\Sigma$, the elliptic operator
  $\mathcal{L}_f$ is given
  by{\foldedcomment{+2dvGGyL01y79rGV2}{+2dvGGyL01y79rGV3}{comment}{bk21}{1765255628}{}{\begin{align}
    & L_f \phi \nonumber\\
    = & - \Delta_{\Sigma} \phi - \gamma u^{- 1} \phi \Delta_{\Sigma} u -
    \gamma \langle \nabla w, \nabla^{\Sigma} \phi \rangle + \sigma \gamma |
    \nabla^{\Sigma} w|^2 \phi \nonumber\\
    & \quad - \tfrac{1}{2} |A^0 |^2 \phi - \tfrac{1}{n - 1} \eta (n f -
    \gamma w_{\nu}) \phi - \tfrac{n}{2 (n - 1)} f^2 \phi \nonumber\\
    & \quad + \tfrac{1}{2} R_{\Sigma} \phi \nonumber\\
    & \quad - \left( \frac{2 n (1 - \sigma) - (n - 1) \gamma}{2 (2 (n - 1) (1
    - \sigma) - (n - 2) \gamma)} \mu^2 + \mu_{\nu} + \Lambda \right) \phi
    \nonumber\\
    & \quad - \tfrac{2 (n - 1) (1 - \sigma) - (n - 2) \gamma}{2 (n - 1)}
    \gamma (w_{\nu} - \tfrac{1}{2 (n - 1) (1 - \sigma) - (n - 2) \gamma}
    \mu)^2 \phi, \nonumber
  \end{align}}}
  \begin{equation}
    \mathcal{L}_f \phi = L \phi + \tfrac{1}{2} R_{\Sigma} \phi + Z_f \phi + W
    \phi \label{LL f},
  \end{equation}
  with $L$, $Z_f$ and $W$ given by
  \begin{equation}
    L \phi = - \Delta_{\Sigma} \phi - \gamma u^{- 1} \phi \Delta_{\Sigma} u -
    \gamma \langle \nabla w, \nabla^{\Sigma} \phi \rangle, \label{L}
  \end{equation}
  and
  \begin{equation}
    Z_f = - \tfrac{1}{n - 1} \eta (n f - \gamma w_{\nu}) - \tfrac{n}{2 (n -
    1)} f^2, \label{Z}
  \end{equation}
  and
\begin{align}
W = & - \tfrac{1}{2} |A^0 |^2 - \left( \frac{2 n - (n - 1) \gamma}{2 (2 (n
- 1) - (n - 2) \gamma)} \mu^2 + \mu_{\nu} + \Lambda \right) \label{W}
\\
& \quad - \tfrac{2 (n - 1) - (n - 2) \gamma}{2 (n - 1)} \gamma (w_{\nu} -
\tfrac{1}{2 (n - 1) - (n - 2) \gamma} \mu)^2 .
\end{align}
\end{lemma}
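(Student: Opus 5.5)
We start from the crude linearisation \eqref{first var crude} and rewrite it term by term until it has the form \eqref{LL f}; the whole argument is algebraic. The first step removes the ambient terms. The Gauss equation gives
\[ \operatorname{Ric}(\nu,\nu) = \tfrac12 R_g - \tfrac12 R_\Sigma + \tfrac12 H^2 - \tfrac12 |A|^2 , \]
so
\[ -(\operatorname{Ric}(\nu)+|A|^2) = -\tfrac12 R_g + \tfrac12 R_\Sigma - \tfrac12 H^2 - \tfrac12 |A|^2 . \]
Since $\Lambda = -\gamma u^{-1}\Delta_g u + \tfrac12 R_g$, we have $-\tfrac12 R_g + \gamma u^{-1}\Delta_g u = -\Lambda$, and this absorbs the ambient Laplacian of $u$. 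The terms $-\Delta_\Sigma\phi - \gamma u^{-1}\phi\Delta_\Sigma u - \gamma\langle\nabla w,\nabla^\Sigma\phi\rangle$ are exactly $L\phi$ from \eqref{L}. The term $-\mu_\nu\phi$ carries through unchanged. The first step therefore yields
\[ \mathcal{L}\phi = L\phi + \tfrac12 R_\Sigma\phi - \Lambda\phi - \mu_\nu\phi + Q\phi , \]
where
\[ Q = -\tfrac12 H^2 - \tfrac12|A|^2 - \gamma w_\nu^2 - \gamma H w_\nu . \]

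The second step splits the second fundamental form into its traceless part, $|A|^2 = |A^0|^2 + \tfrac{1}{n-1}H^2$. This gives
\[ Q = -\tfrac12|A^0|^2 - \tfrac{n}{2(n-1)}H^2 - \gamma H w_\nu - \gamma w_\nu^2 . \]
We then substitute $H = \eta + \mu - \gamma w_\nu$; the natural choice in the lemma is $f=\mu$. We expand $Q$ as a polynomial in $\eta$, $\mu$ and $w_\nu$. The $\eta^2$ and $\eta\mu$ terms come from $-\tfrac{n}{2(n-1)}(\eta+\mu-\gamma w_\nu)^2$. I would group all $\eta$-dependent terms together with the bare $-\tfrac{n}{2(n-1)}f^2$ into $Z_f$. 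One should check directly that the $\eta$-dependent terms give $-\tfrac{1}{n-1}\eta(n\mu - \gamma w_\nu)$ after the $\eta^2$ piece is handled. I expect the statement to be intended at $\eta\equiv 0$, or with $\eta^2$ absorbed, so I would verify the precise bookkeeping here. If an $\eta^2$ term survives, I would write it as $\eta\cdot(\text{stuff})$ using $H=\eta+\mu-\gamma w_\nu$ in only one factor.

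The third step treats the $\eta$-free remainder, which is a quadratic form in $(\mu, w_\nu)$. Its coefficients are as follows:
\[ w_\nu^2:\ -\tfrac{n\gamma^2}{2(n-1)} + \gamma^2 - \gamma = -\tfrac{\gamma\,(2(n-1)-(n-2)\gamma)}{2(n-1)}, \qquad \mu w_\nu:\ \tfrac{n\gamma}{n-1} - \gamma = \tfrac{\gamma}{n-1}, \]
together with the $\mu^2$ term; the part $-\tfrac{n}{2(n-1)}\mu^2$ is already placed in $Z_f$. Completing the square in $w_\nu$ with $a = 2(n-1)-(n-2)\gamma$ gives
\[ -\tfrac{a\gamma}{2(n-1)}\bigl(w_\nu - \tfrac{1}{a}\mu\bigr)^2 + \tfrac{\gamma}{2(n-1)a}\mu^2 . \]
It remains to check that the leftover $\mu^2$ coefficient, after adding back what was moved into $Z_f$, equals $-\tfrac{2n-(n-1)\gamma}{2a}$. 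The check uses the identity
\[ \tfrac{n}{2(n-1)} - \tfrac{\gamma}{2(n-1)a} = \tfrac{na-\gamma}{2(n-1)a} = \tfrac{2n-(n-1)\gamma}{2a}, \]
which holds because $na - \gamma = (n-1)(2n - (n-1)\gamma)$. Finally we collect $-\tfrac12|A^0|^2$, $-\Lambda$, $-\mu_\nu$, the $\mu^2$ term and the completed square into $W$ as in \eqref{W}.

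The main obstacle is the second step: keeping the bookkeeping consistent between $Z_f$ and $W$ so that the $\mu^2$ contributions counted in both are not double-counted. I would settle this by first doing the expansion with $\eta=0$, confirming $W$ there, and then tracking the linear-in-$\eta$ corrections separately.
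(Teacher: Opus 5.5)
Your route is the paper's: Gauss equation in Schoen--Yau form, $|A|^2=|A^0|^2+\tfrac{1}{n-1}H^2$, the substitution $H=\eta+\mu-\gamma w_\nu$, and completing the square in $w_\nu$. Every coefficient you wrote is right, including $na-\gamma=(n-1)(2n-(n-1)\gamma)$. The bookkeeping you left open has a definite answer, however, and the plan you sketch for it is inconsistent. You cannot both put $-\tfrac{n}{2(n-1)}\mu^2$ into $Z_f$ and ``add it back'' to reach the coefficient $-\tfrac{2n-(n-1)\gamma}{2a}$ in $W$; that counts it twice. Carrying the expansion out, the $\eta$-dependent terms are exactly
\[ -\tfrac{1}{n-1}\eta\,(n\mu-\gamma w_\nu)-\tfrac{n}{2(n-1)}\eta^2 , \]
which is the paper's line \eqref{terms with eta} after simplification. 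The \emph{whole} $\mu^2$ coefficient $-\tfrac{n}{2(n-1)}+\tfrac{\gamma}{2(n-1)a}$ belongs to $W$.

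So the identity $\delta_{\phi\nu}\eta=\mathcal{L}_\mu\phi$ holds with $Z_\mu=-\tfrac{1}{n-1}\eta(n\mu-\gamma w_\nu)-\tfrac{n}{2(n-1)}\eta^2$; that is, the $f^2$ in \eqref{Z} must be read as $\eta^2$. Taken literally with $f=\mu$, the stated formula is off by $-\tfrac{n}{2(n-1)}(\mu^2-\eta^2)\phi$. Neither of your fallbacks rescues the literal version. At $\eta\equiv0$ the error $-\tfrac{n}{2(n-1)}\mu^2\phi$ persists, and rewriting $\eta^2=\eta(H+\gamma w_\nu-\mu)$ does not produce the form \eqref{Z}. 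You also should not restrict to $\eta\equiv0$, because the lemma is applied in Proposition~\ref{sign of leaf} to leaves with $\eta(t)\neq0$. What is used there is only $Z_\mu\le -q_t\eta$, and the corrected $Z_\mu$ gives this since $-\tfrac{n}{2(n-1)}\eta^2\le0$. State the corrected $Z_\mu$ explicitly and your proof is complete.
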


\begin{proof}
  It follows from Schoen-Yau's rewrite {\cite{schoen-proof-1979}}, $|A|^2 =
  |A^0 |^2 + \tfrac{1}{n - 1} H^2$ that
\begin{align}
\ensuremath{\operatorname{Ric}} (\nu) + |A|^2 = & \tfrac{1}{2} R_g -
\tfrac{1}{2} R_{\Sigma} + \tfrac{1}{2} H^2 + \tfrac{1}{2} |A|^2
\\
= & \tfrac{1}{2} R_g - \tfrac{1}{2} R_{\Sigma} + \tfrac{n}{2 (n - 1)} H^2
+ \tfrac{1}{2} |A^0 |^2 .
\end{align}
  Here, $A^0$ is the traceless part of the second fundamental form. Since $H =
  \eta + \mu - \gamma w_{\nu}$, then
\begin{align}
& \ensuremath{\operatorname{Ric}} (\nu) + |A|^2 \\
= & \tfrac{1}{2} R_g - \tfrac{1}{2} R_{\Sigma} + \tfrac{1}{2} |A^0 |^2 +
\tfrac{n}{2 (n - 1)} \eta^2 + \tfrac{n}{n - 1} \eta (\mu - \gamma w_{\nu})
\\
& \quad + \tfrac{n}{2 (n - 1)} \mu^2 - \tfrac{n}{n - 1} \gamma \mu
w_{\nu} + \tfrac{n}{2 (n - 1)} \gamma^2 w_{\nu}^2 .
\end{align}
  Using the above and after suitable regrouping, we obtain
\begin{align}
& \delta_{\phi \nu} \eta \\
= & - \Delta_{\Sigma} \phi - \gamma u^{- 1} \phi \Delta_{\Sigma} u -
\gamma \langle \nabla w, \nabla^{\Sigma} \phi \rangle \\
& \quad - \tfrac{1}{2} |A^0 |^2 \phi - \tfrac{n}{n - 1} \eta (\mu -
\gamma w_{\nu}) \phi - \gamma \eta w_{\nu} - \tfrac{n}{2 (n - 1)} \eta^2
\phi \label{terms with eta} \\
& \quad - \tfrac{n}{2 (n - 1)} \mu^2 \phi + (\tfrac{n}{n - 1} \gamma -
\gamma) \phi \mu w_{\nu} + w_{\nu}^2 (- \tfrac{n}{2 (n - 1)} \gamma^2 +
\gamma^2 - \gamma) \phi \label{terms before completing the sqaure}
\\
& \quad + \tfrac{1}{2} R_{\Sigma} \phi - (- \gamma u^{- 1} \Delta_g u +
\tfrac{1}{2} R_g) \phi - \mu_{\nu} \phi .
\end{align}
  With some simplification of the terms on line \eqref{terms with eta},
  completing the square on the line \eqref{terms before completing the sqaure}
  and using the definitions of $\Lambda$, $Z_{\mu}$ in \eqref{Z} and $W$ in
  \eqref{W} finishes the proof.
\end{proof}

{\foldedcomment{+2e3Du1Gm2HOH7VEQ}{+2e3Du1Gm2HOH7VER}{comment}{bk21}{1765858523}{}{It
is evident that the operator $\mathcal{L}_0$ (or $L$, $\mathcal{L}_{\mu}$)
might not be self-adjoint, and the eigenvalues of $\mathcal{L}_0$ might
contain complex numbers. However, according to the Krein-Rutman theorem
({\color{orange}{todo}} ), $\mathcal{L}_0$ admits an eigenvalue called the
\text{{\itshape{principal}}} eig\text{{\itshape{}}}envalue which has least
real part among all its eigenvalues and the cooresponding eigenfunction can be
chosen positive. With this fact in mind, we define the following stability
notion for a warped $h$-hypersurface.

\begin{definition}
  We say that a warped $h$-hypersurface is $\mathcal{L}_0$-stable if the
  principal eigenvalue of $\mathcal{L}_0$ is non-negative.
\end{definition}

Since $\delta_Y (H + \gamma u^{- 1} u_{\nu} - \mu) =\mathcal{L}_0 \phi$ where
$\phi = \langle Y, \nu \rangle$, geometrically, $\mathcal{L}_0$-stability
means that there exists a variation such that the quantity $H + \gamma u^{- 1}
u_{\nu} - \mu$ is non-decreasing. It turns out the two notions,
$\mathcal{L}_0$-stability and energy stability are equivalent. Indeed, by the
explicit expressions of $\tfrac{\mathrm{d}^2}{\mathrm{d} t^2} E (\Omega_t)
\geqslant 0$, we can choose $\psi > 0$ to be the principal eigenfunction
cooresponding to the principal eigenvalue $\lambda_1$, then
\[ 0 \leqslant \tfrac{\mathrm{d}^2}{\mathrm{d} t^2} E (\Omega_t) =
   \int_{\Sigma} u^{\gamma} \phi \mathcal{L}_0 \phi = \lambda_1 \int_{\Sigma}
   u^{\gamma} \phi^2 \]
which gives $\lambda_1 \geqslant 0$. The other direction is entailed in the
following proposition.

\begin{proposition}
  If a warped $h$-hypersurface $\Sigma$ is $\mathcal{L}_0$-stable, then it is
  also energy stable.
\end{proposition}

\begin{proof}
  placeholder
  
  placeholder
\end{proof}}}

\begin{lemma}
  \label{rewrite of L}Let $\psi = u^{\gamma / 2} \phi$ and $L$ be given in
  \eqref{L}, then
  \[ \int_{\Sigma} u^{\gamma} \phi L \phi = \tfrac{4}{4 - \gamma}
     \int_{\Sigma} | \nabla_{\Sigma} \psi |^2 - \gamma (1 - \tfrac{\gamma}{4})
     \int_{\Sigma} | \psi \nabla_{\Sigma} w - \tfrac{1}{2 (1 -
     \tfrac{\gamma}{4})} \nabla_{\Sigma} \psi |^2 . \]
\end{lemma}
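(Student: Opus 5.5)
The proof is a direct computation: integrate by parts on the closed hypersurface $\Sigma$, substitute $\phi = u^{-\gamma/2}\psi$, and complete a square. No earlier results are needed beyond the definition \eqref{L} of $L$ and $w=\log u$. Throughout I assume $\gamma\neq 4$, so that $a := 1-\tfrac{\gamma}{4}\neq 0$. All gradients and Laplacians are taken on $\Sigma$, and I abbreviate $\nabla=\nabla_\Sigma$ when acting on $\phi,\psi,u,w$; the term $\langle\nabla w,\nabla^\Sigma\phi\rangle$ in \eqref{L} only involves the tangential part of $\nabla w$.

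\textbf{Step 1: remove $\Delta_\Sigma\phi$.} Integrating by parts,
\[ -\int_\Sigma u^\gamma\phi\Delta_\Sigma\phi=\int_\Sigma u^\gamma|\nabla\phi|^2+\gamma\int_\Sigma u^\gamma\phi\langle\nabla w,\nabla\phi\rangle. \]
The second term cancels exactly the drift term $-\gamma u^\gamma\phi\langle\nabla w,\nabla^\Sigma\phi\rangle$ in $u^\gamma\phi L\phi$.

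\textbf{Step 2: remove $\Delta_\Sigma u$.} Integrating by parts again, and using $\nabla u = u\nabla w$,
\[ -\gamma\int_\Sigma u^{\gamma-1}\phi^2\Delta_\Sigma u=\gamma(\gamma-1)\int_\Sigma u^\gamma\phi^2|\nabla w|^2+2\gamma\int_\Sigma u^\gamma\phi\langle\nabla\phi,\nabla w\rangle. \]
Adding Steps 1 and 2,
\[ \int_\Sigma u^\gamma\phi L\phi=\int_\Sigma \Bigl(u^\gamma|\nabla\phi|^2+2\gamma u^\gamma\phi\langle\nabla\phi,\nabla w\rangle+\gamma(\gamma-1)u^\gamma\phi^2|\nabla w|^2\Bigr). \]

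\textbf{Step 3: change variables to $\psi = u^{\gamma/2}\phi$.} Since $u^{\gamma/2}\nabla\phi=\nabla\psi-\tfrac{\gamma}{2}\psi\nabla w$, we get
\[ u^\gamma|\nabla\phi|^2=|\nabla\psi|^2-\gamma\psi\langle\nabla\psi,\nabla w\rangle+\tfrac{\gamma^2}{4}\psi^2|\nabla w|^2, \]
\[ u^\gamma\phi\langle\nabla\phi,\nabla w\rangle=\psi\langle\nabla\psi,\nabla w\rangle-\tfrac{\gamma}{2}\psi^2|\nabla w|^2. \]
Substituting these into the identity from Step 2, the $\psi^2|\nabla w|^2$ coefficient is
\[ \tfrac{\gamma^2}{4}+\gamma(\gamma-1)-\gamma^2=-\gamma\bigl(1-\tfrac{\gamma}{4}\bigr). \]
Hence the integrand becomes
\[ |\nabla\psi|^2+\gamma\psi\langle\nabla\psi,\nabla w\rangle-\gamma a\,\psi^2|\nabla w|^2 . \]

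\textbf{Step 4: complete the square.} Expanding the square in the statement gives
\[ -\gamma a\Bigl|\psi\nabla w-\tfrac{1}{2a}\nabla\psi\Bigr|^2=-\gamma a\psi^2|\nabla w|^2+\gamma\psi\langle\nabla w,\nabla\psi\rangle-\tfrac{\gamma}{4a}|\nabla\psi|^2 . \]
This matches the cross term and the $\psi^2|\nabla w|^2$ term from Step 3 exactly. The leftover $|\nabla\psi|^2$ coefficient is
\[ 1+\tfrac{\gamma}{4a}=1+\tfrac{\gamma}{4-\gamma}=\tfrac{4}{4-\gamma}, \]
which is the claimed formula.

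The only real obstacle is bookkeeping: tracking the three sources of the cross term $\psi\langle\nabla\psi,\nabla w\rangle$ and of the $\psi^2|\nabla w|^2$ coefficient after the substitution. It is also worth checking that the boundary terms vanish in both integrations by parts, which holds because $\Sigma$ is closed (or because $\phi$ is compactly supported in the interior).
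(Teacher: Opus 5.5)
Your proof is correct and follows the paper's argument: integrate by parts on the closed hypersurface $\Sigma$, pass to $\psi = u^{\gamma/2}\phi$, and complete the square; the only difference is that you integrate by parts before substituting, whereas the paper substitutes first. Your Step 3 integrand $|\nabla\psi|^2+\gamma\psi\langle\nabla\psi,\nabla w\rangle+(\tfrac{\gamma^2}{4}-\gamma)\psi^2|\nabla w|^2$ is exactly the paper's intermediate identity \eqref{rewrite of L without squares}, with the factor $\psi$ in the cross term and the square on $|\nabla^{\Sigma} w|$, both missing in the paper's display, correctly restored.
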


\begin{proof}
  Inserting $\psi = u^{\gamma / 2} \phi$ in the definition \eqref{L} of $L$,
\begin{align}
u^{\gamma} \phi L \phi = & - u^{\gamma / 2} \psi \Delta_{\Sigma} (u^{-
\gamma / 2} \psi) - \gamma u^{- 1} \psi^2 \Delta_{\Sigma} u \\
& \quad - \gamma \langle \nabla w, \nabla^{\Sigma} (u^{- \gamma / 2}
\psi) \rangle u^{\gamma / 2} \psi .
\end{align}
  Take the integration on $\Sigma$, and by integration by parts on the first
  two terms and a direct calculation,
  \begin{equation}
    \int_{\Sigma} u^{\gamma} \phi L \phi = \int_{\Sigma} | \nabla_{\Sigma}
    \psi |^2 + \gamma \langle \nabla w, \nabla^{\Sigma} \psi \rangle +
    (\tfrac{\gamma^2}{4} - \gamma) | \nabla^{\Sigma} w| \psi^2 .
    \label{rewrite of L without squares}
  \end{equation}
  The rest is a simple completing of the squares.
\end{proof}

The following is an immediate corollary from Lemmas \ref{first variation eta
general} and \ref{rewrite of L}.

\begin{corollary}
  \label{second var warped h hypersurface}For a warped $\mu$-hypersurface
  $\Sigma$, the second variation \eqref{eq stable via E} can be rewritten as
\begin{align}
& \tfrac{\mathrm{d}^2}{\mathrm{d} t^2} E (\Omega_t) |_{t = 0} \\
= & \tfrac{4}{4 - \gamma} \int_{\Sigma} | \nabla_{\Sigma} \psi |^2 +
\tfrac{1}{2} \int_{\Sigma} R_{\Sigma} \psi^2 \\
& \quad - \gamma (1 - \tfrac{\gamma}{4}) \int_{\Sigma} | \psi
\nabla_{\Sigma} w - \tfrac{1}{2 (1 - \tfrac{\gamma}{4})} \nabla_{\Sigma}
\psi |^2 + \int_{\Sigma} W \psi^2,
\end{align}
  where $W$ is given in \eqref{W} and $u^{\gamma / 2} \phi = \psi$.
\end{corollary}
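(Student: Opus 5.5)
We prove Corollary \ref{second var warped h hypersurface} by combining the second variation formula \eqref{second variation non-explicit} with Lemmas \ref{first variation eta general} and \ref{rewrite of L}. The argument is pure bookkeeping; no new estimate is needed.

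First, since $\Sigma$ is a warped $\mu$-hypersurface, $\eta = H + \gamma u^{-1} u_\nu - \mu \equiv 0$ along $\Sigma$. For a variation with normal speed $\phi$, \eqref{second variation non-explicit} gives
\[ \tfrac{\mathrm{d}^2}{\mathrm{d} t^2} E (\Omega_t) |_{t = 0} = \int_{\Sigma} u^{\gamma} \phi\, \delta_{\phi\nu}\eta . \]
Only the normal component of the variation matters here, because $\eta \equiv 0$ kills the tangential contribution. By Lemma \ref{first variation eta general}, $\delta_{\phi\nu}\eta = \mathcal{L}_\mu \phi = L\phi + \tfrac12 R_\Sigma \phi + Z_\mu \phi + W\phi$. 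From \eqref{Z}, with $f = \mu$,
\[ Z_\mu = -\tfrac{1}{n-1}\eta(n\mu - \gamma w_\nu) - \tfrac{n}{2(n-1)}\mu^2 . \]
The $\eta$ term vanishes on $\Sigma$.

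Here I must check a point. The $\mu^2$ contribution has to be already absorbed into $W$ for the stated formula to hold. In the proof of Lemma \ref{first variation eta general}, the $-\tfrac{n}{2(n-1)}\mu^2$ from \eqref{terms before completing the sqaure} goes into the completed square and the coefficient $\tfrac{2n-(n-1)\gamma}{2(2(n-1)-(n-2)\gamma)}\mu^2$ in $W$. The $\eta$-terms on \eqref{terms with eta} are what $Z_f$ records. So on a $\mu$-hypersurface the effective $Z_\mu$ vanishes, and I will read $Z_\mu$ this way, as "the terms carrying a factor of $\eta$". That reading fixes any normalization mismatch. This is the step I expect to need care: reconciling exactly which terms $Z$ versus $W$ contain. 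Everything else is mechanical. What remains is
\[ \int_\Sigma u^\gamma\phi L\phi + \tfrac12\int_\Sigma R_\Sigma u^\gamma\phi^2 + \int_\Sigma W u^\gamma \phi^2 . \]

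Next, set $\psi = u^{\gamma/2}\phi$, so that $u^\gamma\phi^2 = \psi^2$. The last two integrals then become $\tfrac12\int_\Sigma R_\Sigma\psi^2 + \int_\Sigma W\psi^2$. For the first integral, Lemma \ref{rewrite of L} gives directly
\[ \int_\Sigma u^\gamma \phi L \phi = \tfrac{4}{4-\gamma}\int_\Sigma|\nabla_\Sigma\psi|^2 - \gamma(1-\tfrac{\gamma}{4})\int_\Sigma|\psi\nabla_\Sigma w - \tfrac{1}{2(1-\gamma/4)}\nabla_\Sigma\psi|^2 . \]
Summing these three pieces yields exactly the asserted formula. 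As a sanity check, I would expand the completed square in Lemma \ref{rewrite of L} and recover \eqref{rewrite of L without squares}, whose last term should be read as $|\nabla^\Sigma w|^2\psi^2$. Expanding gives $|\nabla\psi|^2$ coefficient $\tfrac{4}{4-\gamma} - \tfrac{\gamma}{4-\gamma} = 1$, cross term $\gamma\langle\nabla w,\nabla\psi\rangle\psi$, and $\psi^2|\nabla w|^2$ coefficient $\gamma^2/4 - \gamma$. These match \eqref{rewrite of L without squares}, so the formula is consistent.
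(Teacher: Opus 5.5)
Your proposal is correct and follows the paper's own route: the paper declares the corollary immediate from Lemmas \ref{first variation eta general} and \ref{rewrite of L}, i.e.\ substitute $\delta_{\phi\nu}\eta=\mathcal{L}_\mu\phi$ into \eqref{second variation non-explicit}, drop the terms that vanish because $\eta\equiv 0$, and rewrite in terms of $\psi=u^{\gamma/2}\phi$. Your reading of $Z_\mu$ as ``the terms carrying a factor of $\eta$'' is the right one, and it is easy to confirm: regrouping line \eqref{terms with eta} gives $-\tfrac{1}{n-1}\eta(n\mu-\gamma w_\nu)-\tfrac{n}{2(n-1)}\eta^2$, so the $f^2$ in \eqref{Z} is a typo for $\eta^2$, while completing the square on \eqref{terms before completing the sqaure} produces the $\mu^2$-coefficient $-\tfrac{n}{2(n-1)}+\tfrac{\gamma}{2(n-1)(2(n-1)-(n-2)\gamma)}=-\tfrac{2n-(n-1)\gamma}{2(2(n-1)-(n-2)\gamma)}$, which is exactly the coefficient appearing in $W$.
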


\subsection{Model metrics for Theorem \ref{spec llarull band}}

Now we discuss some model metrics where $\Lambda (g_{\xi}, u_{\xi})$ is a
constant. The model metrics are $g_{\xi} = \mathrm{d} t^2 + \xi (t)^2
g_{\mathbb{S}^{n - 1}}$ with
\begin{equation}
  \xi (t) = a \sin (b t), \label{eq xi from a b}
\end{equation}
where
\begin{align}
a (\gamma, \Lambda) & = \sqrt{\frac{\gamma (n - 1) (n - 2) (2 n - (n - 1)
\gamma)}{2 \Lambda (2 (n - 2) - (n - 3) \gamma)}}, \\
b (\gamma, \Lambda) & = \frac{(2 - \gamma) \sqrt{2 \Lambda}}{\sqrt{\gamma (2
n - (n - 1) \gamma) (2 (n - 1) - (n - 2) \gamma)}} .
\end{align}
The functions $u$, $h$ in $H + \gamma u^{- 1} u' - m = 0$ by
\begin{equation}
  u = \xi^{\tfrac{1}{2 - \gamma}}, \text{ } m = \frac{2 (n - 1) + \gamma (2 -
  n)}{2 - \gamma} \xi' \xi^{- 1} . \label{model u h}
\end{equation}
and $m$ satisfies the ODE
\begin{equation}
  m' + \tfrac{- n \gamma + \gamma + 2 n}{4 (n - 1) + 2 \gamma (2 - n)} m^2 +
  \Lambda = \tfrac{1}{2} (n - 1) (n - 2) \xi^{- 2} . \label{model ode}
\end{equation}
This metric also satisfies $- \gamma u^{- 1} \Delta_g u + \tfrac{1}{2} R_g =
\Lambda$. Here, $\Lambda > 0$, the case already appeared in
{\cite{chai-spectral-2024}}. Now for $\Lambda < 0$, then $\xi = a (\gamma, -
\Lambda) \sinh (b (\gamma, - \Lambda) t)$, and $\xi$ satisfies \eqref{model u
h} and \eqref{model ode}. Now for $\Lambda = 0$, $\xi = a (\gamma, 1) b
(\gamma, 1) t$ and $\xi$ satisfies \eqref{model u h} and \eqref{model ode}.

\section{Llarull type theorem for bands}\label{sec band}

In this section, we give the proof of Theorem \ref{spec llarull band}. Our
strategy is a classical foliation argument which makes the full use of the
rigidity of Listing's Theorem \ref{thm listing}.

\subsection{Construction of $\mu$}

Given a warped product metric $g_{\xi} = \mathrm{d} t^2 + \xi (t)^2
g_{\mathbb{S}^{n - 1}}$ where $\xi (t) > 0$ for all $t \in [t_-, t_+]$ and
$\gamma \geqslant 0$ on $\mathbb{S}_I^n,$ define
\[ h_{\xi} (t) = (n - 1) \xi' / \xi, \text{ } m_{\xi} (t) = (n - 1 +
   \tfrac{\gamma}{2 - \gamma}) \tfrac{\xi'}{\xi}, \text{ } u_{\xi} (t) =
   \xi^{\tfrac{1}{2 - \gamma}} . \]
Let $(M, g)$ be a band such that the map $f : (M, g) \to (\mathbb{S}_I^n,
g_{\xi})$ is of non-zero degree, define $\ensuremath{\operatorname{pr}}_I :
\mathbb{S}_I^n \to I$ projection,
\begin{equation}
  \mu := \mu_{\xi} = m_{\xi} \circ \ensuremath{\operatorname{pr}}_I \circ f :
  M \to \mathbb{R}, \label{mu}
\end{equation}
and
\begin{equation}
  f_{\Sigma} : \ensuremath{\operatorname{pr}}_{\mathbb{S}^{n - 1}} \circ f
  \circ i_{\Sigma} : \Sigma \to \mathbb{S}^{n - 1} \label{f Sigma}
\end{equation}
where $i_{\Sigma} : \Sigma \to M$ is the inclusion map of $\Sigma$ into $M$.

\subsection{Local rigidity}

\begin{proposition}
  \label{local rigidity}Assume that $(M, g)$ is given in Theorem \ref{spec
  llarull band}, and that $\Sigma$ is a stable warped $\mu$-hypersurface in
  $(M, g)$ disjoint from $\partial_{\pm} M$ and such that $f_{\Sigma}$ is of
  non-zero degree, then $\Sigma$ is a level set of
  $\ensuremath{\operatorname{pr}}_I \circ f$, $(\Sigma, g_{\Sigma})$ is
  isometric to $(\mathbb{S}^{n - 1}, (\xi \circ
  \ensuremath{\operatorname{pr}}_I \circ f)^2 g_{\mathbb{S}^{n - 1}})$ via
  $f_{\Sigma}$.
\end{proposition}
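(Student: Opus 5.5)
The plan is to combine the stability inequality of Corollary \ref{second var warped h hypersurface} with the curvature assumptions of Theorem \ref{spec llarull band}. This should produce a spectral lower bound for $R_\Sigma$ in terms of $\|\mathrm{d} f_\Sigma\|^2$. I would then invoke Gauss--Bonnet when $n=3$ and (a spectral version of) Listing's Theorem \ref{thm listing} when $n=4$. Finally I would run the equality case backwards.

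\emph{Step 1: bound $W$ from above.} Write $\tau = \operatorname{pr}_I\circ f$, so that $\mu = m_\xi\circ\tau$ and $\mu_\nu = m_\xi'(\tau)\langle\nabla\tau,\nu\rangle$. Since $f$ is distance non-increasing, $|\nabla \tau|\le 1$. Since $m_\xi'<0$, this gives $\mu_\nu \ge m_\xi'(\tau)$, with equality iff $\nabla\tau=\nu$. Next I would verify the model identity for $\Lambda(g_\xi,u_\xi)$ by the same computation that gives \eqref{model ode}:
\[ \Lambda(g_\xi,u_\xi) = \tfrac12 (n-1)(n-2)\xi^{-2} - m_\xi' - c\, m_\xi^2, \qquad c = \tfrac{2n-(n-1)\gamma}{2(2(n-1)-(n-2)\gamma)}. \]
Combining this with assumption (a) and the formula \eqref{W}, the $\mu^2$ and $\mu_\nu$ terms cancel against the model, and I get
\[ W \le -\tfrac12|A^0|^2 - \tfrac12(n-1)(n-2)\,\xi(\tau)^{-2} - (\text{nonnegative square term}). \]
The last square term carries the coefficient $\gamma\,\tfrac{2(n-1)-(n-2)\gamma}{2(n-1)}$, and I will need it to be nonnegative for the admissible range of $\gamma$. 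If $\gamma<0$ it has the wrong sign; in that case I would instead absorb it by choosing the normalisation of $u_\xi$ as in the paper.

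\emph{Step 2: compare with $\|\mathrm{d} f_\Sigma\|$.} Because $f$ does not increase distance into $\mathrm{d}t^2+\xi^2 g_{\mathbb{S}^{n-1}}$, for tangent vectors $v$ to $\Sigma$ we have $\xi(\tau)^2|\mathrm{d} f_\Sigma(v)|^2\le |v|^2$. Hence $\|\mathrm{d} f_\Sigma\|^2\le \xi(\tau)^{-2}$. Stability, together with dropping the nonpositive square term involving $\nabla_\Sigma w$ (here $\gamma<4$ is used), then yields for all $\psi$
\[ \int_\Sigma \tfrac{4}{4-\gamma}|\nabla_\Sigma\psi|^2 + \tfrac12\big(R_\Sigma - (n-1)(n-2)\|\mathrm{d} f_\Sigma\|^2\big)\psi^2 \ge 0. \]

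\emph{Step 3, the main obstacle: rigidity on $\Sigma$.}

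For $n=3$, I take $\psi\equiv1$ (component by component) and use Gauss--Bonnet, $\int R_\Sigma\le 8\pi$. I also use the degree bound $\int\|\mathrm{d} f_\Sigma\|^2 \ge 2\int |\mathrm{Jac} f_\Sigma| \ge 8\pi |\deg f_\Sigma|$. Together these force equality throughout; since $\deg f_\Sigma\neq0$, this gives $\Sigma\cong \mathbb{S}^2$ with $f_\Sigma$ conformal and $|\mathrm{Jac}\,f_\Sigma| = \|\mathrm{d} f_\Sigma\|^2/2 = \xi(\tau)^{-2}$.

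For $n=4$, I need the spectral form of Listing's theorem, namely Theorem \ref{spectral listing theorem}, with constant $\tfrac{4}{4-\gamma}$. The requirement $\gamma<3$ should be exactly what makes this constant admissible in dimension $3$. That theorem gives that $f_\Sigma$ is a homothety and that the optimal $\psi$ is constant.

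\emph{Step 4: conclude.} In either case, equality in Steps 1--2 gives three facts: $\xi(\tau)\|\mathrm{d} f_\Sigma\|=1$, $A^0=0$, and $\nabla\tau=\nu$ on $\Sigma$. Since $f_\Sigma$ is a homothety with some fixed factor $c$, the function $\xi(\tau)$ is constant on $\Sigma$. Strict monotonicity of $m_\xi$ then gives that $\tau$ is constant on $\Sigma$, so $\Sigma$ is a level set of $\operatorname{pr}_I\circ f$. Finally, $f_\Sigma$ is an isometry onto $(\mathbb{S}^{n-1},\xi(\tau)^2 g_{\mathbb{S}^{n-1}})$.
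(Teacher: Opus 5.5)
Your overall route is the paper's. You use the same bound $W\le -\tfrac12(n-1)(n-2)\xi(\tau)^{-2}$, obtained from $\mu_\nu\ge m_\xi'(\tau)$ and the model identity, and the same reduced stability form with coefficient $\tfrac{4}{4-\gamma}$. You also use Gauss--Bonnet for $n=3$. For $n=4$, invoking Theorem \ref{spectral listing theorem} with spectral constant $\gamma'=\tfrac{4}{4-\gamma}$ is the paper's argument in different packaging. That theorem's conformal change $v^{\gamma'}g_\Sigma$ is exactly the paper's $v^{4\alpha}g_\Sigma$ with $\alpha=\tfrac{1}{4-\gamma}$. Moreover $\gamma'<4\iff\alpha<1\iff\gamma<3$, as you guessed.

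\textbf{The $n=3$ step is wrong as written.} In Step 2 you correctly bound the operator norm, $\|\mathrm{d}f_\Sigma\|^2\le\xi(\tau)^{-2}$. In Step 3 you then use $\int\|\mathrm{d}f_\Sigma\|^2\ge 2\int|\mathrm{Jac}\,f_\Sigma|$. That inequality holds for the Hilbert--Schmidt norm, not the operator norm; for a conformal map $\|\mathrm{d}f\|_{\mathrm{op}}^2=|\mathrm{Jac}\,f|$. Taken literally with $(n-1)(n-2)=2$, your chain reads $8\pi\ge\int R_\Sigma\ge 2\int\|\mathrm{d}f_\Sigma\|^2\ge 16\pi|\deg f_\Sigma|$. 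That is a contradiction even on the model sphere. Likewise, your conclusion $\|\mathrm{d}f_\Sigma\|^2/2=\xi(\tau)^{-2}$ contradicts Step 2. The correct estimate is $\int\|\mathrm{d}f_\Sigma\|_{\mathrm{op}}^2\ge\int|\mathrm{Jac}\,f_\Sigma|\ge 4\pi|\deg f_\Sigma|$. Equivalently, use the paper's $\int_\Sigma\xi(\tau)^{-2}\,\mathrm{d}A_g\ge 4\pi$, which follows from $\mathrm{d}A_g\ge f_\Sigma^*(\xi^2\,\mathrm{d}A_{\mathbb{S}^2})$. With this, the chain closes with equality: $\chi(\Sigma)=2$, $|\deg f_\Sigma|=1$, and $f_\Sigma$ is conformal with factor $\xi(\tau)^{-2}$.

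Two smaller fixes:
\begin{itemize}
\item In Step 4, get constancy of $\tau$ on $\Sigma$ directly from the equality $\nabla\tau=\nu$, which gives $\nabla^\Sigma\tau=0$ on connected $\Sigma$. Do not route it through a homothety and ``monotonicity''. For $n=3$ you have no homothety with a constant factor until you know $\tau$ is constant, and $\xi$ itself need not be injective. Once $\tau$ is constant, the conformal degree-one local diffeomorphism $f_\Sigma$ is the claimed isometry.
\item Drop the remark about $\gamma<0$; nothing in the paper ``absorbs'' those squares. Both dropped square terms need $\gamma\ge 0$, which the paper assumes in Section 3.1.
\end{itemize}
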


\begin{proof}
  First, we estimate $\mu_{\nu}$ appeared in the definition of $W$ in
  \eqref{W}. Recall that $\mu = m_{\xi} \circ \ensuremath{\operatorname{pr}}_I
  \circ f$, see \eqref{mu}. By the chain rule,
  \[ \mu_{\nu} = m_{\xi}' (\ensuremath{\operatorname{pr}}_I \circ f) \langle
     \nabla (\ensuremath{\operatorname{pr}}_I \circ f), \nu \rangle . \]
  Since $f$ is of distance non-increasing, $| \nabla
  (\ensuremath{\operatorname{pr}}_I \circ f) | \leqslant 1$; and since
  $m_{\xi}' < 0$,
  \begin{equation}
    \mu_{\nu} \geqslant m_{\xi}' (\ensuremath{\operatorname{pr}}_I \circ f) .
    \label{mu normal derivative}
  \end{equation}
  Now we can estimate the term $\frac{2 n - (n - 1) \gamma}{2 (2 (n - 1) - (n
  - 2) \gamma)} \mu^2 + \mu_{\nu} + \Lambda$ appeared in $W$, see \eqref{W}.
  We set
  \begin{equation}
    \Gamma = \frac{2 n - (n - 1) \gamma}{2 (2 (n - 1) - (n - 2) \gamma)}
    \label{Gamma}
  \end{equation}
  for convenience. By the comparison $\Lambda \geqslant f^{\ast}
  \Lambda_{g_{\xi}} = \Lambda_{g_{\xi}} \circ f$, $\mu = m_{\xi} \circ
  \ensuremath{\operatorname{pr}}_I \circ f$ and \eqref{mu normal derivative},
  we see
\begin{align}
& \Gamma \mu^2 + \mu_{\nu} + \Lambda \geqslant \Gamma m_{\xi}^2
(\ensuremath{\operatorname{pr}}_I \circ f) + m_{\xi}'
(\ensuremath{\operatorname{pr}}_I \circ f) + \Lambda_{g_{\xi}, u_{\xi}}
\circ f.
\end{align}
  It is easy to check that the right hand side of the above is $\tfrac{1}{2}
  (n - 1) (n - 2) \tfrac{1}{\xi^2 (\ensuremath{\operatorname{pr}}_I \circ
  f)}$. Hence,
  \[ \Gamma \mu^2 + \mu_{\nu} + \Lambda \geqslant \tfrac{1}{2} (n - 1) (n - 2)
     \tfrac{1}{\xi^2 (\ensuremath{\operatorname{pr}}_I \circ f)}, \]
  and evidently then
  \begin{equation}
    W \leqslant - \tfrac{1}{2} (n - 1) (n - 2) \tfrac{1}{\xi^2
    (\ensuremath{\operatorname{pr}}_I \circ f)} . \label{estimate W}
  \end{equation}
  Using the rewrite of Corollary \ref{second var warped h hypersurface} on the
  stability of $\Sigma$, and then \eqref{estimate W},
  \begin{equation}
    0 \leqslant \tfrac{4}{4 - \gamma} \int_{\Sigma} | \nabla_{\Sigma} \psi |^2
    + \tfrac{1}{2} \int_{\Sigma} R_{\Sigma} \psi^2 - \tfrac{1}{2} (n - 1) (n -
    2) \int_{\Sigma} \tfrac{1}{\xi^2 (\ensuremath{\operatorname{pr}}_I \circ
    f)} \psi^2 =: B_1 (\psi, \psi) \label{simplified stability for dim 3}
  \end{equation}
  for all $\psi \in C^2 (\Sigma)$. Note that, we have dropped the integrals of
  the non-negative terms listed below (up to constant multiples)
  \begin{equation}
    | \psi \nabla_{\Sigma} w - \tfrac{1}{2 (1 - \tfrac{\gamma}{4})}
    \nabla_{\Sigma} \psi |^2, \text{ } (w_{\nu} - \tfrac{1}{2 (n - 1) - (n -
    2) \gamma} \mu)^2 \psi^2, \text{ and } |A^0 |^2 \psi^2 .
    \label{non-negative dropped}
  \end{equation}
  Set
  \begin{equation}
    \tilde{L} = - \tfrac{4}{4 - \gamma} \Delta_{\Sigma} + \tfrac{1}{2}
    (R_{\Sigma} - (n - 1) (n - 2) \tfrac{1}{\xi^2
    (\ensuremath{\operatorname{pr}}_I \circ f)}) . \label{L tilde}
  \end{equation}
  Now we perform an analysis similar to
  {\cite{fischer-colbrie-structure-1980}} in three dimensions. Now $\tilde{L}
  = - \tfrac{4}{4 - \gamma} \Delta_{\Sigma} + \tfrac{1}{2} (R_{\Sigma} -
  \tfrac{2}{\xi^2 (\ensuremath{\operatorname{pr}}_I \circ f)})$. We set $\psi
  = 1$ and use the Gauss-Bonnet theorem in \eqref{simplified stability for dim
  3}, and obtain
  \begin{equation}
    2 \pi - \int_{\Sigma} \tfrac{1}{\xi^2 (\ensuremath{\operatorname{pr}}_I
    \circ f)} \mathrm{d} A_g \geqslant B_1 (1, 1) = 2 \pi \chi (\Sigma) -
    \int_{\Sigma} \tfrac{1}{\xi^2 (\ensuremath{\operatorname{pr}}_I \circ f)}
    \mathrm{d} A_g \geqslant 0. \label{after gauss-bonnet}
  \end{equation}
  where $\mathrm{d} A_g$ denotes the area element of $\Sigma$ with respect to
  the metric $g$. Since $g \geqslant g_{\xi}$, $\mathrm{d} A_g \geqslant
  f^{\ast} (\xi^2 \mathrm{d} A_{\mathbb{S}^2})$, here $A_{\mathbb{S}^2}$ is
  the area element of the standard 2-sphere. So the left hand side of
  \eqref{after gauss-bonnet} is non-positive, which forces all inequalities of
  \eqref{after gauss-bonnet} have to be equalities, in particular, $B_1 (1, 1)
  = 0$.
  
  It is easy to check $B_1 (\psi, \psi) = \int_{\Sigma} \psi \tilde{L}
  \psi$. Hence by \eqref{simplified stability for dim 3}, the first eigenvalue
  $\lambda_1$ of $\tilde{L}$ is non-negative. In fact, since $B_1 (1, 1) = 0$,
  $\lambda_1 = 0$ and hence $\tilde{L} 1 = 0$, that is, $R_{\Sigma} =
  \tfrac{2}{\xi^2 (\ensuremath{\operatorname{pr}}_I \circ f)}$ along $\Sigma$.
  Tracing back the equalities in the full stability inequality (see Corollary
  \ref{second var warped h hypersurface}), and noting that we have used the
  non-negativity of the terms in \eqref{non-negative dropped}, we see
  \begin{equation}
    \nabla_{\Sigma} w = 0, \text{ } w_{\nu} - \tfrac{1}{4 - \gamma} \mu = 0,
    \text{ and } |A^0 | = 0 \text{ along } \Sigma \label{non-negative is zero
    in dim 3}
  \end{equation}
  (note that $n = 3$). Moreover, $\nabla (\ensuremath{\operatorname{pr}}_I
  \circ f) = \nu$ and $\Lambda = f^{\ast} \Lambda_{g_{\xi}}$ along $\Sigma$.
  
  Now we handle four dimensions. We use the estimate
  \begin{equation}
    \|(\mathrm{d} f_{\Sigma})_p \|^2 \leqslant \tfrac{1}{\xi^2
    (\ensuremath{\operatorname{pr}}_I (f (p)))} \label{eq estimate of
    gradient}
  \end{equation}
  in the estimate \eqref{estimate W} of $W$, and we obtain (using also $n =
  4$)
  \begin{equation}
    0 \leqslant \tfrac{4}{4 - \gamma} \int_{\Sigma} | \nabla_{\Sigma} \psi |^2
    + \tfrac{1}{2} \int_{\Sigma} (R_{\Sigma} - 6\|(\mathrm{d} f_{\Sigma})_p
    \|^2) \psi^2 =: B_2 (\psi, \psi) \label{simplified stability for dim 4}
  \end{equation}
  instead of \eqref{simplified stability for dim 3}. Let $\hat{L} = -
  \tfrac{4}{4 - \gamma} \Delta_{\Sigma} + \tfrac{1}{2} (R_{\Sigma} -
  6\|(\mathrm{d} f_{\Sigma})_p \|^2)$, by the above, the first eigenvalue of
  $\hat{L}$ is non-negative. We choose the corresponding eigenfunction $v$ to
  be positive that is, $\hat{L} v = \lambda_1 v \geqslant 0$. Without loss of
  generality, we assume that $\sup_{\Sigma} v = 1$.
  
  Let $\alpha = \tfrac{1}{4 - \gamma}$ and $g_{\Sigma}$ be the induced metric
  on $\Sigma$ from $g$. We consider the conformal metric $v^{4 \alpha}
  g_{\Sigma}$, and the scalar curvature of $\Sigma$ with respect to $v^{4
  \alpha} g_{\Sigma}$ is
\begin{align}
R_{\Sigma} (v^{4 \alpha} g_{\Sigma}) & = v^{- 4 \alpha} (R_{\Sigma} - 8
\alpha v^{- 1} \Delta_{\Sigma} v - 8 \alpha (\alpha - 1) | \nabla^{\Sigma}
\log v|^2) \\
& = v^{- 4 \alpha} (2 \lambda_1 + 6\|(\mathrm{d} f_{\Sigma})_p \|^2 - 8
\alpha (\alpha - 1) | \nabla^{\Sigma} \log v|^2) \\
& = v^{- 4 \alpha} (2 \lambda_1 - 8 \alpha (\alpha - 1) | \nabla^{\Sigma}
\log v|^2) + 6 \|(\mathrm{d} f_{\Sigma})_p \|^2_{v^{4 \alpha} g_{\Sigma}}
\\
& \geqslant 6 \|(\mathrm{d} f_{\Sigma})_p \|^2_{v^{4 \alpha} g_{\Sigma}}
\label{sign from listing}
\end{align}
  by $\lambda_1 \geqslant 0$ and the fact $0 < \alpha < 1$ from the
  assumptions. By the rigidity statement of Theorem \ref{thm listing} in
  dimension $n - 1 = 3$, the above has to be an equality, and then $\lambda_1
  = 0$ and $v = 1$. In particular, $f_{\Sigma} : (\Sigma, g_{\Sigma}) \to
  (\mathbb{S}^3, c^2 g_{\mathbb{S}^3})$ where the map $f_{\Sigma}$ is defined
  in \eqref{f Sigma} is an isometry for some constant $c > 0$. We will
  determine the constant $c$ shortly.
  
  As in dimension 3, we can also trace back the equalities: letting $\psi = 1$
  in \eqref{simplified stability for dim 4} first give $B_2 (1, 1) = 0$ and
  then in Corollary \ref{second var warped h hypersurface}, which forces
  \begin{equation}
    \nabla_{\Sigma} w = 0, \text{ } w_{\nu} - \tfrac{1}{6 - 2 \gamma} \mu = 0,
    \text{ and } |A^0 | = 0 \text{ along } \Sigma \label{local rigidity for u}
  \end{equation}
  similar to \eqref{non-negative is zero in dim 3}. Also, the equality of
  \eqref{eq estimate of gradient} is achieved, in particular, it implies $c =
  \xi \circ \ensuremath{\operatorname{pr}}_I \circ f$. As before, $\nabla
  (\ensuremath{\operatorname{pr}}_I \circ f) = \nu$ and $\Lambda = f^{\ast}
  \Lambda_{g_{\xi}}$ along $\Sigma$.
\end{proof}

\begin{remark}
  \label{non-positive eigenvalue}We have also shown that the first eigenvalue
  of the operator $\tilde{L}$ defined in \eqref{L tilde} is non-positive for
  any hypersurface $\Sigma$ such that $f_{\Sigma}$ is non-zero degree: assume
  otherwise, then it is a contradiction to the Gauss-Bonnet theorem in
  dimension $n = 3$ and to Theorem \ref{thm listing} due to Listing in
  dimension $n = 4$.
\end{remark}

\subsection{Analysis of the foliation}

The most important property of $\Sigma$ in Lemma \ref{local rigidity} is that
fact that the linearisation of $H + \gamma u^{- 1} u_{\nu} - \mu$ gives only
$- \Delta_{\Sigma}$ which allows us to construct a foliation with the property
that $H + \gamma u^{- 1} u_{\nu} - \mu$ is constant along every leaf of the
foliation.

\begin{proposition}[cf. {\cite[Lemma 3.4]{chai-band-2025}}]
  \label{existence foliation}Let $\Sigma$ be a hypersurface such that
  $\delta_Y (H + \gamma u^{- 1} u_{\nu} - \mu) = - \Delta_{\Sigma} \phi$, then
  there exists a foliation $\{\Sigma_t \}_{t \in (- \varepsilon,
  \varepsilon)}$ such that $\Sigma_0 = \Sigma$ and that $H + \gamma u^{- 1}
  u_{\nu} - \mu$ is constant along $\Sigma_t$.
\end{proposition}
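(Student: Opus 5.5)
We will use the implicit function theorem, in the standard way one builds CMC-type foliations. Fix the unit normal $\nu$ of $\Sigma$ and some $\alpha\in(0,1)$. For $\phi$ in a small neighbourhood of $0$ in $C^{2,\alpha}(\Sigma)$, let $\Sigma_\phi=\{\exp_x(\phi(x)\nu(x)):x\in\Sigma\}$. Then $\Sigma_\phi$ is a smooth embedded hypersurface close to $\Sigma$ and disjoint from $\partial_\pm M$, and it inherits the normal orientation. Write $\eta(\phi)\in C^{0,\alpha}(\Sigma)$ for the quantity $H+\gamma u^{-1}u_\nu-\mu$ of $\Sigma_\phi$, pulled back to $\Sigma$. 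Since $u$ and $\mu$ are smooth (recall $\mu=m_\xi\circ\mathrm{pr}_I\circ f$ by \eqref{mu}), $\eta$ is a smooth map between these Banach spaces. Its linearisation at $\phi=0$ is $\delta_{\phi\nu}\eta$, which by hypothesis equals $-\Delta_\Sigma\phi$.

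Next we correct for the kernel of $-\Delta_\Sigma$. Set
\[ X=\{\phi\in C^{2,\alpha}(\Sigma):\textstyle\int_\Sigma\phi=0\},\qquad Y=\{\psi\in C^{0,\alpha}(\Sigma):\textstyle\int_\Sigma\psi=0\}, \]
and define
\[ \Phi:(-\varepsilon,\varepsilon)\times X\times\mathbb{R}\to C^{0,\alpha}(\Sigma),\qquad \Phi(t,\phi,k)=\eta(t+\phi)-k. \]
Then $\Phi(0,0,\eta|_\Sigma)=0$. If $\eta|_\Sigma$ is not already constant, we first replace $\Sigma$ by the situation in the application, where $\eta\equiv0$ (a $\mu$-hypersurface); in general we can absorb the constant into $k$. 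The partial derivative of $\Phi$ in $(\phi,k)$ at the base point is $(\phi,k)\mapsto-\Delta_\Sigma\phi-k$. We check it is an isomorphism $X\times\mathbb{R}\to C^{0,\alpha}(\Sigma)$:
\begin{itemize}
\item Injectivity: if $-\Delta_\Sigma\phi=k$, integrating over $\Sigma$ gives $k\,|\Sigma|=0$, so $k=0$. Then $\phi$ is harmonic on the closed manifold $\Sigma$, hence constant, and the mean-zero condition forces $\phi=0$.
\item Surjectivity: given $\psi$, take $k=-|\Sigma|^{-1}\int_\Sigma\psi$. Then $\psi+k\in Y$, and Schauder theory on the closed manifold $\Sigma$ solves $-\Delta_\Sigma\phi=\psi+k$ with $\phi\in X$, so $-\Delta_\Sigma\phi-k=\psi$.
\end{itemize}

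The implicit function theorem now gives smooth maps $t\mapsto(\phi(t),k(t))$ with $\phi(0)=0$ such that $\Sigma_t:=\Sigma_{t+\phi(t)}$ has $H+\gamma u^{-1}u_\nu-\mu\equiv k(t)$, a constant on each leaf.

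It remains to show the $\Sigma_t$ form a foliation. Differentiate the identity $\eta(t+\phi(t))=k(t)$ at $t=0$:
\[ -\Delta_\Sigma(1+\phi'(0))=k'(0),\quad\text{i.e.}\quad -\Delta_\Sigma\phi'(0)=k'(0). \]
The injectivity argument above then gives $k'(0)=0$ and $\phi'(0)=0$. So the variation field of the family is $\nu$ at $t=0$, with speed $\partial_t(t+\phi(t))=1$. By continuity, $\partial_t(t+\phi(t))>0$ for $|t|$ small, after shrinking $\varepsilon$. The map $(x,t)\mapsto\exp_x((t+\phi(t)(x))\nu)$ is therefore a local diffeomorphism onto a neighbourhood of $\Sigma$. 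Since it is monotone in $t$ along each normal geodesic, it is injective, and so the $\Sigma_t$ foliate a neighbourhood of $\Sigma$ with $\Sigma_0=\Sigma$.

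The main point is the Fredholm step. Because $-\Delta_\Sigma$ has a kernel (the constants), the map is not invertible on its own. Adding the constant $k$ as an unknown and restricting to mean-zero $\phi$ is exactly what turns the linearisation into an isomorphism. The rest is routine regularity of $\eta$ in $\phi$.
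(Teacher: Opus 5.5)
Your argument is correct and is essentially the standard implicit-function-theorem construction the paper relies on: it cites Chai--Sun for this, and its own conical analogue uses the map $F(t,v)$ with the average subtracted, which is equivalent to adjoining the constant $k$ as an unknown on mean-zero $\phi$. The one thing to state cleanly rather than hedge is the implicit hypothesis: $\eta=H+\gamma u^{-1}u_\nu-\mu$ must already be constant on the closed, connected hypersurface $\Sigma$, as in the application where $\eta\equiv 0$ and $\Sigma$ is a connected component. Otherwise the conclusion fails at $t=0$, and connectedness is what makes the kernel of $-\Delta_\Sigma$ exactly the constants.
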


The following is our key estimate of the foliation.

\begin{proposition}
  \label{sign of leaf}Let $\Sigma$ be given in Proposition \ref{local
  rigidity}, and let $\{\Sigma_t \}_{t \in (- \varepsilon, \varepsilon)}$ be
  the foliation constructed in Proposition \ref{existence foliation} such that
  $\Sigma_0 = \Sigma$. Let $\eta (t) = H + \gamma u^{- 1} u_{\nu} - \mu$, then
  $\eta (t) \leqslant 0$ if $t \in (0, \varepsilon)$ and $\eta (t) \geqslant
  0$ if $t \in (- \varepsilon, 0)$.
\end{proposition}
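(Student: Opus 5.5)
Write each leaf as a normal graph $\Sigma_t=\{\exp_x(\rho_t(x)\nu(x)) : x\in\Sigma\}$ over $\Sigma=\Sigma_0$, with $\partial_t\rho_t|_{t=0}=1$ after normalising the lapse. Let $\phi_t>0$ be the normal speed of $\Sigma_t$, so $\phi_0\equiv 1$. The plan is to obtain an ODE-type inequality for $\eta(t)$ by differentiating $\eta$ along the foliation and reusing, leaf by leaf, the pointwise estimates from the proof of Proposition \ref{local rigidity}. Since $\eta$ is constant on each leaf, Lemma \ref{first variation eta general} gives
\[ \eta'(t)=\mathcal{L}_{\mu}\phi_t=L\phi_t+\tfrac12 R_{\Sigma_t}\phi_t+Z_{\mu}\phi_t+W\phi_t \quad\text{on } \Sigma_t. \]

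First, divide by $\phi_t$ and integrate over $\Sigma_t$ with the weight $u^{\gamma}$, or rather use $\psi=u^{\gamma/2}\phi_t^{1/2}$ as a test function as in a Fischer-Colbrie–Schoen argument. The term $\phi_t^{-1}L\phi_t$ will be rewritten via Lemma \ref{rewrite of L} plus the identity $-\phi^{-1}\Delta\phi=-\Delta\log\phi-|\nabla\log\phi|^2$, giving a gradient expression in $\log\phi_t$ and $w$ plus a divergence. Next, bound $W$ as in \eqref{estimate W}: this uses $m_\xi'<0$ together with $|\nabla(\mathrm{pr}_I\circ f)|\le 1$, the comparison $\Lambda\ge f^*\Lambda(\xi,u_\xi)$, and the discarded squares. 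This should yield
\[ \eta'(t)\Big(\int_{\Sigma_t}\tfrac{u^\gamma}{\phi_t}\Big)\le \tfrac{4}{4-\gamma}\int|\nabla\psi|^2+\tfrac12\int\big(R_{\Sigma_t}-(n-1)(n-2)\xi^{-2}\big)\psi^2+\int Z_\mu\psi^2 , \]
with the integrands suitably arranged, and with $\xi^{-2}$ replaced by $\|\mathrm d f_{\Sigma_t}\|^2$ when $n=4$, via \eqref{eq estimate of gradient}.

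The key point is Remark \ref{non-positive eigenvalue}. For $|t|$ small, $f_{\Sigma_t}$ is homotopic to $f_\Sigma$ and therefore still has non-zero degree. Hence the first eigenvalue of $\tilde L$ (resp. $\hat L$) on $\Sigma_t$ is non-positive, by Gauss–Bonnet for $n=3$ and Listing's Theorem \ref{thm listing} for $n=4$. The difficulty is that we need the Rayleigh quotient bound for our specific $\psi$, not just for the first eigenfunction. So the plan is to choose the normalisation of the foliation so that the quadratic form is controlled from above.

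For $n=3$ this is direct: take $\psi$ as above, drop nonnegative squares, and apply Gauss–Bonnet together with $\mathrm dA_g\ge f^*(\xi^2\mathrm dA_{\mathbb S^2})$. That gives $\int_{\Sigma_t}(R_{\Sigma_t}-2\xi^{-2})\le 0$.

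For $n=4$, I would use the conformal trick of Proposition \ref{local rigidity}. If the form were positive, the conformal metric built from the first eigenfunction would satisfy the strict inequality in \eqref{sign from listing}, contradicting Listing's rigidity. Then the term $Z_\mu=-\frac{1}{n-1}\eta(n\mu-\gamma w_\nu)-\frac{n}{2(n-1)}\mu^2$ is handled by absorbing the $\mu^2$ part into $W$ (that part is already inside $\Gamma\mu^2$, so the bookkeeping needs care). One is then left with an inequality $\eta'(t)\le C(t)\,\eta(t)$ for a bounded function $C$.

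Finally, since $\eta(0)=0$, Grönwall's inequality applied to $\eta'\le C\eta$ gives $\eta(t)\le 0$ for $t>0$ and $\eta(t)\ge 0$ for $t<0$, which is the claim.

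The main obstacle is the second step: making the integrated Rayleigh quotient nonpositive for the particular weight $u^{\gamma}\phi_t^{-1}$, when the eigenvalue information from Listing/Gauss–Bonnet only concerns the optimal test function. I would first try to exploit the freedom in choosing $\psi$, since the inequality is linear in $\eta'$ and the term $\eta'\int u^\gamma/\phi_t$ holds for any positive normalisation. Failing that, I would argue by contradiction: if $\eta(t)>0$ for some small $t>0$, choose the first such interval and show that the leaf has positive first eigenvalue of $\hat L$, contradicting Remark \ref{non-positive eigenvalue}.
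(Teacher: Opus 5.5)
Your outline (differentiate $\eta$ along the foliation, bound $W_t$ pointwise via \eqref{estimate W}, use Remark \ref{non-positive eigenvalue} on each leaf, conclude by Gr\"onwall from $\eta(0)=0$) matches the paper. However, the step you flag as the main obstacle is left unresolved, and it is the whole content of the proof.

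As long as your test function is tied to the lapse ($\psi=u^{\gamma/2}\phi_t^{1/2}$, or $u^{\gamma/2}\phi_t$ as in Corollary \ref{second var warped h hypersurface}), the right-hand side is the quadratic form of $\tilde L_t$ (resp.\ $\hat L_t$) at one specific function. Knowing $\lambda_1\le 0$ gives no sign for that value. This already breaks your ``direct'' $n=3$ case: Gauss--Bonnet and the area comparison control $\int_{\Sigma_t}(R_{\Sigma_t}-2\xi^{-2})$, i.e.\ the form at $\psi\equiv 1$, not at your $\psi$. Re-normalising the foliation parameter does not help, since it only multiplies $\phi_t$ by a constant on each leaf. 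Your contradiction fallback would need exactly the computation that is missing.

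The missing idea is to decouple $\psi$ from $\phi_t$. Start from the pointwise inequality
$\eta'(t)\le L_t\phi_t+\tfrac12\big(R_{\Sigma_t}-(n-1)(n-2)\xi^{-2}\big)\phi_t-q_t\eta(t)\phi_t$, with $q_t=\tfrac{1}{n-1}(n\mu-\gamma w_{\nu_t})$. (Expanding $\tfrac{n}{2(n-1)}H^2$ with $H=\eta+\mu-\gamma w_\nu$, the quadratic term left in $Z_\mu$ is $-\tfrac{n}{2(n-1)}\eta^2\le 0$. Every $\mu^2$ contribution already sits inside $W$, so drop this term; absorbing it into $W$ would double count.)

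Because $\eta'(t)$ is constant on $\Sigma_t$, you may divide by $\phi_t$ and test against an \emph{arbitrary} $\psi^2$. Write $\phi_t=u^{-\gamma/2}e^{\zeta_t}$; then
$\phi_t^{-1}L_t\phi_t=-|\nabla\zeta_t|^2-\Delta\zeta_t+(\tfrac{\gamma^2}{4}-\gamma)|\nabla w|^2-\tfrac{\gamma}{2}\Delta w$.
The Cauchy--Schwarz bound $\psi^2(|\nabla\zeta_t|^2+\Delta\zeta_t)\ge-|\nabla\psi|^2+\operatorname{div}(\psi^2\nabla\zeta_t)$, plus one integration by parts on the $\Delta w$ term, removes $\phi_t$ from the right-hand side entirely. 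Lemma \ref{rewrite of L} then gives
\[ \eta'(t)\int_{\Sigma_t}\phi_t^{-1}\psi^2\le\int_{\Sigma_t}\psi\,\tilde L_t\psi-\eta(t)\int_{\Sigma_t}q_t\psi^2 \quad\text{for every }\psi, \]
with $\phi_t^{-1}$ surviving only as a positive weight on the left. Now choose $\psi$ to be the first eigenfunction of $\tilde L_t$ (of $\hat L_t$ if $n=4$). Remark \ref{non-positive eigenvalue} makes the first term non-positive, giving $\eta'(t)\le -Q(t)\eta(t)$, and your Gr\"onwall step finishes the argument.
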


\begin{proof}
  Using the variational formula \eqref{first var crude},
  \begin{equation}
    \eta' (t) =\mathcal{L}_{\mu, t} \phi_t = L_t \phi_t + \tfrac{1}{2}
    R_{\Sigma_t} \phi_t + Z_{\mu, t} \phi_t + W_t \phi_t \label{variation eta
    t}
  \end{equation}
  where $\phi_t = \langle \partial_t, \nu_t \rangle > 0$. We use the subscript
  $t$ to indicate that the quantities live in $\Sigma_t$. Note that $Z_{\mu,
  t} \leqslant - \tfrac{1}{n - 1} \eta (t) (n \mu - \gamma w_{\nu_t}) = : -
  q_t \eta (t)$ by \eqref{Z} and then by \eqref{estimate W},
  \[ \eta' (t) \leqslant L_t \phi_t + \tfrac{1}{2} (R_{\Sigma_t} -
     \tfrac{2}{\xi^2 (\ensuremath{\operatorname{pr}}_I (f (p)))}) \phi_t - q_t
     \eta (t) \phi_t . \]
  Now we divide the above by $\phi_t$ and multiply by $\psi^2$, integrate on
  $\Sigma_t$ and obtain that
  \begin{equation}
    \eta' (t) \int_{\Sigma_t} \phi_t^{- 1} \psi^2 \leqslant \int_{\Sigma_t}
    \psi^2 \phi_t^{- 1} L_t \phi_t + \tfrac{1}{2} (R_{\Sigma_t} -
    \tfrac{2}{\xi^2 (\ensuremath{\operatorname{pr}}_I (f (p)))}) \psi^2 - \eta
    (t) \int_{\Sigma_t} q_t \psi^2 . \label{eta ode first}
  \end{equation}
  Let $\phi_t = u^{- \gamma / 2} e^{\zeta_t}$, then by a direct calculation
\begin{align}
\phi_t^{- 1} L_t \phi_t = & - \phi_t \Delta_{\Sigma_t} \phi_t - \gamma
u^{- 1} \Delta_{\Sigma} u - \gamma \phi_t^{- 1} \langle \nabla w,
\nabla^{\Sigma_t} \phi_t \rangle \\
= & - | \nabla^{\Sigma_t} \zeta_t |^2 - \Delta_{\Sigma_t} \zeta_t +
(\tfrac{\gamma^2}{4} - \gamma) | \nabla^{\Sigma_t} w|^2 -
\tfrac{\gamma}{2} \Delta_{\Sigma_t} w.
\end{align}
  Now we estimate $(- | \nabla^{\Sigma_t} \zeta_t |^2 - \Delta_{\Sigma_t}
  \zeta_t) \psi^2$ and $- \tfrac{\gamma}{2} \psi^2 \Delta_{\Sigma_t} w$ as
\begin{align}
& (| \nabla^{\Sigma_t} \zeta_t |^2 + \Delta_{\Sigma_t} \zeta_t) \psi^2
\\
= & | \nabla^{\Sigma_t} \zeta_t |^2 \psi^2 - 2 \langle \nabla^{\Sigma_t}
\psi, \nabla^{\Sigma_t} \zeta_t \rangle
+\ensuremath{\operatorname{div}}_{\Sigma_t} (\psi^2 \nabla^{\Sigma_t}
\zeta_t) \\
\geqslant & - | \nabla^{\Sigma_t} \psi |^2
+\ensuremath{\operatorname{div}}_{\Sigma_t} (\psi^2 \nabla^{\Sigma_t}
\zeta_t)
\end{align}
  by Cauchy-Schwarz inequality; and
  \[ \tfrac{\gamma}{2} \Delta_{\Sigma_t} w = \tfrac{\gamma}{2}
     \ensuremath{\operatorname{div}}_{\Sigma_t} (\psi^2 \nabla^{\Sigma_t} w) -
     \gamma \psi \langle \nabla^{\Sigma_t} \psi, \nabla^{\Sigma_t} w \rangle .
  \]
  With these two estimates in \eqref{eta ode first} and using the divergence
  theorem, we see
\begin{align}
& \eta' (t) \int_{\Sigma_t} \phi_t^{- 1} \psi^2 \\
\leqslant & \int_{\Sigma_t} (| \nabla^{\Sigma_t} \psi |^2 + \gamma \psi
\langle \nabla^{\Sigma_t} \psi, \nabla^{\Sigma_t} w \rangle +
(\tfrac{\gamma^2}{4} - \gamma) | \nabla^{\Sigma_t} w|^2 \psi^2)
\label{similar to rewrite L without squares} \\
& \qquad + \tfrac{1}{2} \int_{\Sigma_t} (R_{\Sigma_t} - \tfrac{2}{\xi^2
(\ensuremath{\operatorname{pr}}_I (f (p)))}) \psi^2 - \eta (t)
\int_{\Sigma_t} q_t \psi^2 .
\end{align}
  Note that this has the same form as \eqref{rewrite of L without squares}.
  Applying Lemma \ref{rewrite of L}, we obtain that
  \[ \eta' (t) \int_{\Sigma_t} \phi_t^{- 1} \psi^2 \leqslant \tfrac{4}{4 -
     \gamma} \int_{\Sigma} (| \nabla_{\Sigma} \psi |^2 + \tfrac{1}{2}
     (R_{\Sigma_t} - \tfrac{2}{\xi^2 (\ensuremath{\operatorname{pr}}_I (f
     (p)))}) \psi^2) - \eta (t) \int_{\Sigma_t} q_t \psi^2 . \]
  Recall that in the proof of Lemma \ref{local rigidity}, we have shown that
  the first eigenvalue of
  \[ \tilde{L}_t = - \tfrac{4}{4 - \gamma} \Delta_{\Sigma_t} + \tfrac{1}{2}
     (R_{\Sigma_t} - (n - 1) (n - 2) \tfrac{1}{\xi^2
     (\ensuremath{\operatorname{pr}}_I \circ f)}) \]
  is non-positive, see Remark \ref{non-positive eigenvalue}. Note that this is
  for every $t \in (- \varepsilon, \varepsilon)$. Let $\psi$ be replaced by
  the first eigenfunction $\psi_t$ of $\tilde{L}_t$, then
  \[ \eta' (t) \int_{\Sigma_t} \phi_t^{- 1} \psi_t^2 \leqslant - \eta (t)
     \int_{\Sigma_t} q_t \psi^2_t \]
  which gives
  \begin{equation}
    \tfrac{\mathrm{d}}{\mathrm{d} t} \left( \eta (t) \exp (\int^t Q (s)
    \mathrm{d} s) \right) \leqslant 0 \label{ode for eta}
  \end{equation}
  where $Q (t) = \frac{\int_{\Sigma_t} q_t \psi^2_t}{\int_{\Sigma_t} \phi_t^{-
  1} \psi_t^2}$. The above ODE and that $\eta (0) = 0$ finish the proof of the
  proposition.
\end{proof}

\begin{remark}
  The trick of obtaining \eqref{ode for eta} is borrowed from
  {\cite{chai-band-2025}}.
\end{remark}

Now we are ready to finish the proof of Theorem \ref{spec llarull band}.

\begin{proof}[Proof of Theorem \ref{spec llarull band}]
  By an argument of Chai-Sun {\cite[Appendix B]{chai-band-2025}} (replacing
  the $\mu$-hypersurface with the warped $\mu$-hypersurface), we can assume
  that there exists a non-trivial minimiser $\Omega$ to the functional
  \eqref{eq E band}. Let $\Sigma' = \partial \Omega \backslash \partial_- M$,
  then by {\cite[Lemma 6.3]{rade-scalar-2023}}, there exists a connected
  component $\Sigma$ of $\Sigma'$ such that $f_{\Sigma} : \Sigma \to
  \mathbb{S}^{n - 1}$ has non-zero degree. Moreover, $\Sigma$ is a stable
  warped $\mu$-hypersurface. By Lemma \ref{local rigidity}, the linearisation
  of $H + \gamma u^{- 1} u_{\nu} - \mu$ along $\Sigma$ is just $-
  \Delta_{\Sigma}$ and it leads to the existence of a foliation $\{\Sigma_t
  \}_{t \in (- \varepsilon {,} \varepsilon)}$ such that $\Sigma_0 = \Sigma$ by
  Proposition \ref{existence foliation}. By Proposition \ref{sign of leaf},
  $\eta (t) \leqslant 0$ if $t \in (0, \varepsilon)$ and $\eta (t) \geqslant
  0$ if $t \in (- \varepsilon, 0)$.
  
  For each non-zero $t \in (- \varepsilon, \varepsilon)$, define $\Omega_t$ as
  follows
  \[ \Omega_t = \Omega \cup \tilde{\Omega}_t \text{ if } t > 0, \text{ }
     \Omega \backslash \tilde{\Omega}_t \text{ if } t < 0, \]
  where $\tilde{\Omega}_t$ is the region bounded by $\Sigma_t$ and $\Sigma$.
  
  We consider $t > 0$ and the case $t < 0$ is similar. By the first variation
  of $E$,
  \[ \tfrac{\mathrm{d}}{\mathrm{d} t} E (\Omega_t) = \int_{\Sigma_t} (H +
     \gamma u^{- 1} u_{\nu} - \mu) u^{\gamma} \phi_t = \eta (t)
     \int_{\Sigma_t} u^{\gamma} \phi_t \leqslant 0 \]
  where $Y = \phi_t \nu$. So, each $\Omega_t$ is also a minimiser to the
  functional \eqref{eq E band}, and each $\Sigma_t$ satisfies the conclusions
  of Lemma \ref{local rigidity}. Hence, by connectedness, we find that $M$ is
  globally foliated by such hypersurfaces. Since each level set is isometric
  to $(\xi \circ \ensuremath{\operatorname{pr}}_I \circ f)^2 g_{\mathbb{S}^{n
  - 1}}$ and $\nabla (\ensuremath{\operatorname{pr}}_I \circ f) = \nu$, we see
  the metric is $\mathrm{d} t^2 + \xi^2 (t) g_{\mathbb{S}^{n - 1}}$, and the
  function $u$ is a constant multiple of $u^{\tfrac{1}{2 - \gamma}}$ follows
  from \eqref{local rigidity for u}.
\end{proof}

\begin{remark}
  \label{slightly general}The proof also works if we replace the comparisons
  of the mean curvatures by $H + \gamma u^{- 1} u_{\nu} \geqslant \mu$ on
  $\partial_+ M$ and $H + \gamma u^{- 1} u_{\nu} \leqslant \mu$ on $\partial_-
  M$. In this case, we can obtain additionally that $\partial_{\pm} M$ are
  both level sets of the function $\ensuremath{\operatorname{pr}}_I \circ f$.
\end{remark}

\section{Llarull's theorem with conical end points}\label{sec cone}

In this section, we give the proof for Theorems \ref{llarull cone theorem} and
\ref{cone
llarull}.{\foldedcomment{+2WO8POgj2162sL3I}{+2WO8POgj2162sL3J}{comment}{bk21}{1767518938}{}{{\tt
  \trivlist{\item[\color{rgb:black,10;red,9;green,4;yellow,2}{\color{red}{(\%i
  16)
  }}]\mbox{}{\color{blue!50!black}f(a):=(a*g*(a+n-2-s*a)+(n-1)*(n-2)/2)/(n-1+a*g)\^{}2}\item[]\mbox{}\
  
  $\text{\text{{\ttfamily{{\color{red}{(\%o16) }}}}}} f (a) := \frac{ag (a +
  n - 2 + (- s) a) + \frac{(n - 1)  (n - 2)}{2}}{(n - 1 + ag)^2}$}
  \trivlist{\item[\color{rgb:black,10;red,9;green,4;yellow,2}{\color{red}{(\%i
  18)
  }}]\mbox{}{\color{blue!50!black}diff(f(a),a),factor}\item[]\mbox{}$\text{\text{{\ttfamily{{\color{red}{(\%o18)
  }}}}}} - \left( \frac{ag (2 ns - 2 s + gn - 2 n - 2 g + 2)}{(n + ag - 1)^3}
  \right)$}
}}}

\subsection{Tangent cone analysis}

Let $p \in f^{- 1} (p_{\xi})$ where $p_{\xi}$ is the conical point of the
metric $g_{\xi}$ at $t = t_-$. Without loss of generality, we assume that $t_-
= 0$. In this subsection, we assume that the tangent cone at $p$ of the metric
$g$ and the tangent cone at $p_{\xi}$ of the metric $g_{\xi}$ are isometric.
In other words, near $p$, there exists a coordinate system such that the
metric is of the form $\mathrm{d} t^2 + A^2 t^2 g_{\mathbb{S}^{n - 1}} + g_1$
with $A$ satisfying \eqref{unit cross-section size} and $g_1$ is a 2-form
which is small compared to the metric $\mathrm{d} t^2 + A^2 t^2
g_{\mathbb{S}^{n - 1}}$.

First, we prove a spectral version of Theorem \ref{thm listing}.

\begin{theorem}
  \label{spectral listing theorem}Let $0 \leqslant \gamma < \tfrac{2 (n -
  1)}{n - 2}$ and $f : (M^n, g) \to (\mathbb{S}^n, g_{\mathbb{S}^n})$ be a
  spin map of non-zero degree and $u$ a positive smooth function on $M$ such
  that
  \[ \Lambda_{g, u} \geqslant \tfrac{1}{2} n (n - 1) \tfrac{(f^{\ast}
     g_{\mathbb{S}^n}) (X, X)}{g (X, X)} \]
  at all points $p$ and for all non-zero vectors $X \in T_p M$, then there
  exists a positive constant $c$ such that $f : (M, g) \to (\mathbb{S}^n, c
  g_{\mathbb{S}^n})$ is an isometry and $u$ is a constant.
\end{theorem}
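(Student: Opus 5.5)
The plan is to reduce Theorem \ref{spectral listing theorem} to Listing's Theorem \ref{thm listing} by a conformal change built from a power of $u$, in the same spirit as the conformal trick used in the four-dimensional part of the proof of Proposition \ref{local rigidity}.

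\textbf{Step 1: pointwise form of the hypothesis.} Taking the supremum over $X$ in the hypothesis gives
\[ R_g \geqslant 2 \gamma u^{-1} \Delta_g u + n (n-1) \|\mathrm{d} f\|_g^2 . \]
Here $\|\mathrm{d} f\|_g^2 = \sup_X (f^{\ast} g_{\mathbb{S}^n})(X,X)/g(X,X)$.

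\textbf{Step 2: the conformal metric.} Assume $n \geqslant 3$. Put $v = u^k$ with
\[ k = \tfrac{(n-2)\gamma}{2(n-1)}, \]
and $\hat g = v^{4/(n-2)} g$. Write $w = \log u$ and $c_n = \tfrac{4(n-1)}{n-2}$. A direct computation gives $v^{-1}\Delta v = k\bigl(u^{-1}\Delta u + (k-1)|\nabla w|^2\bigr)$. The conformal transformation law then gives
\[ v^{4/(n-2)} R_{\hat g} = R_g - c_n k\, u^{-1}\Delta u + c_n k (1-k) |\nabla w|^2 . \]
The choice of $k$ is exactly the one making $c_n k = 2\gamma$, so the Laplacian terms cancel against Step 1. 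The assumption $0 \leqslant \gamma < \tfrac{2(n-1)}{n-2}$ is precisely $0 \leqslant k < 1$.

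\textbf{Step 3: the curvature bound for $\hat g$.} Since the operator norm scales as $\|\mathrm{d} f\|_{\hat g}^2 = v^{-4/(n-2)} \|\mathrm{d} f\|_g^2$, Steps 1 and 2 combine to
\[ R_{\hat g} \geqslant n(n-1)\|\mathrm{d} f\|_{\hat g}^2 + v^{-4/(n-2)}\, 2\gamma(1-k)\,|\nabla w|^2 \geqslant n(n-1)\|\mathrm{d} f\|_{\hat g}^2 . \]

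\textbf{Step 4: apply Listing and trace equality.} The map $f$ is still spin of non-zero degree, since spin is a topological condition. Theorem \ref{thm listing} applied to $(M,\hat g)$ gives a constant $c>0$ such that $f : (M, c\hat g) \to (\mathbb{S}^n, g_{\mathbb{S}^n})$ is an isometry. Then $R_{\hat g} = n(n-1)c$ and $\|\mathrm{d} f\|^2_{\hat g} = c$, so the inequality in Step 3 is an equality everywhere. Hence $\gamma(1-k)|\nabla w|^2 \equiv 0$.

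For $\gamma > 0$ this forces $u$ to be constant. Then $\hat g$ is a constant multiple of $g$, so $f$ is an isometry onto a constant multiple of the round metric, as claimed. For $\gamma = 0$ the function $u$ does not enter $\Lambda$, and the statement reduces to Theorem \ref{thm listing} itself; the conclusion about $u$ is then only meaningful for $\gamma > 0$. I would note this in the write-up.

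\textbf{Main obstacle.} The only delicate point is the bookkeeping in Step 2. The choice of exponent $k$ has to cancel the $\Delta u$ term exactly while leaving a gradient term of the right sign, and $1-k>0$ is exactly where the upper bound on $\gamma$ is used.

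For $n = 2$ the same scheme should work with $\hat g = e^{2\gamma w/2} g$, i.e.\ using the two-dimensional law $R_{\hat g} = e^{-2\phi}(R_g - 2\Delta \phi)$. This case is simpler and produces no gradient error term; I have not checked it in detail.
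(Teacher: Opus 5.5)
Your proof is correct and follows the paper's argument: the same conformal metric $\hat g = u^{4k/(n-2)} g$ with $\tfrac{4(n-1)}{n-2}k = 2\gamma$ (the paper's $\beta$), the same leftover term $2\gamma(1-k)|\nabla \log u|^2 \geqslant 0$ coming from $k<1$, and the same equality-case argument after Listing's theorem; your observation that the conclusion ``$u$ is constant'' genuinely needs $\gamma>0$ is a valid correction that the paper omits. Only your unchecked $n=2$ aside is wrong: the correct metric is $\hat g = u^{2\gamma} g$, and it does produce a term $2\gamma|\nabla \log u|^2$, which is exactly what forces $u$ to be constant in the equality case.
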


\begin{proof}
  The theorem follows from Listing's Theorem \ref{thm listing} by a conformal
  change. Let $f_1 = u^{\beta}$ where $4 (n - 1) / (n - 2) \beta = 2 \gamma$
  and $\hat{g} = f_1^{\tfrac{4}{n - 2}} g$. By the assumption, $1 > \beta$. We
  use the well known formula of conformal change
  \[ R_{\hat{g}} = f_1^{- \tfrac{4}{n - 2}} (R_g - \tfrac{4 (n - 1)}{n - 2}
     f^{- 1} \Delta f) . \]
  We have the estimate using that $1 > \beta$,
\begin{align}
R_{\hat{g}} = & f_1^{- \tfrac{4}{n - 2}} (R_g - \tfrac{4 (n - 1)}{n - 2}
(\beta \tfrac{\Delta_g u}{u} + \beta (\beta - 1) \tfrac{| \nabla
u|^2}{u^2})) \\
= & f_1^{- \tfrac{4}{n - 2}} (R_g - 2 \gamma u^{- 1} \Delta_g u - 2 \gamma
(\beta - 1) \tfrac{| \nabla u|^2}{u^2}) \\
= & f_1^{- \tfrac{4}{n - 2}} (2 \Lambda_{g, u} - 2 \gamma (\beta - 1)
\tfrac{| \nabla u|^2}{u^2}) \\
\geqslant & f_1^{- \tfrac{4}{n - 2}} n (n - 1) \tfrac{(f^{\ast}
g_{\mathbb{S}^n}) (X, X)}{g (X, X)} \label{general listing ineq}
\\
= & n (n - 1) \tfrac{(f^{\ast} g_{\mathbb{S}^n}) (X, X)}{\hat{g} (X, X)} .
\end{align}
  By Listing's Theorem \ref{thm listing} for the metric $\hat{g}$, $\hat{g}$
  is isometric to some $c g_{\mathbb{S}^n}$ for some positive constant $c_1$.
  Then the inequality \eqref{general listing ineq} must be an identity, so $u$
  must be a constant and $f_1$ is also a constant. Hence, the theorem is
  proved.
\end{proof}

\begin{remark}
  We can apply the same trick and replace $\tfrac{(f^{\ast} g_{\mathbb{S}^n})
  (X, X)}{g (X, X)}$ with the square root of $\tfrac{(f^{\ast}
  g_{\mathbb{S}^n}) (X \wedge Y, X \wedge Y)}{g (X \wedge Y, X \wedge Y)}$
  where $X$ and $Y$ are vectors such that $X \wedge Y \neq 0$. Also, we can
  replace $(\mathbb{S}^n, g_{\mathbb{S}^n})$ with a manifold $(N, \bar{g})$
  with positive curvature operator and $n (n - 1)$ with $R_{\bar{g}} \circ f
  $. See {\cite[Theorem 1]{listing-scalar-arxiv-2010}}.
\end{remark}

Denote
\begin{equation}
  \lambda (g_{\Sigma}, v) = - \gamma v^{- 1} \Delta_{\Sigma} v + \tfrac{1}{2}
  R_{\Sigma}, \label{Lambda level}
\end{equation}
where $v$ is a function on $\Sigma$. We study the relation of $\lambda
(g_{\Sigma}, v)$ and $\Lambda_{\bar{g}, \bar{u}}$ where $\Sigma$ is the unit
cross-section of the cone $(C_1 = (0, \infty) \times \Sigma, \bar{g} =
\mathrm{d} t^2 + t^2 g_{\Sigma})$ for some metric $g_{\Sigma}$ and $\bar{u} =
t^{\alpha} v$. Let $\Sigma_t = \{t\} \times \Sigma$, using the first variation
formula \eqref{first var crude} on $\Sigma_t$, we see
\begin{align}
- \tfrac{n - 1}{t^2} - \tfrac{\alpha \gamma}{t^2} = & \partial_t (H + \gamma
u^{- 1} \partial_t u) \\
= & - \gamma u^{- 1} \Delta_g u + \tfrac{1}{2} R_g \\
& \quad - (- \gamma u^{- 1} \Delta_{\Sigma_t} u  + \tfrac{1}{2}
R_{\Sigma_r}) \\
& \quad + \tfrac{1}{2} \tfrac{n}{n - 1} H_{\Sigma_t}^2 + \gamma
(\tfrac{\partial_t u}{u})^2 + \gamma H_{\Sigma_t} \tfrac{\partial_t u}{u} .
\end{align}
Considering that $\tfrac{| \nabla^g u|^2}{u^2} = \tfrac{| \nabla^{\Sigma}
v|^2}{v^2} + \alpha^2 t^{- 2}$, $H_{\Sigma_t} = \tfrac{n - 1}{t}$, the
definitions of $\Lambda_{g, u}$ and $\lambda_{\Sigma, v}$, we have
\begin{equation}
  \lambda (g_{\Sigma}, v) = \Lambda_{g, u} + \alpha \gamma (\alpha + n - 2)
  t^{- 2} + \tfrac{1}{2} (n - 1) (n - 2) t^{- 2} . \label{spectral descent}
\end{equation}
Now we show that the exponent in the asymptotics of $u$ is $\tfrac{1}{2 -
\gamma}$.

\begin{lemma}
  Let $u$ be given in Theorem \ref{llarull cone theorem}, then $u (t) =
  t^{\tfrac{1}{2 - \gamma}} (1 + O (t))$.
\end{lemma}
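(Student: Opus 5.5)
The plan is a blow-up at a point $p\in f^{-1}(p_\xi)$ combined with the descent identity \eqref{spectral descent}. The statement has two layers. For the model function it is immediate: since $\xi(t)=At+O(t^2)$,
\[ u_\xi=\xi^{\frac{1}{2-\gamma}}=A^{\frac{1}{2-\gamma}}t^{\frac{1}{2-\gamma}}\bigl(1+O(t)\bigr). \]
So the content lies in the function $u$ of Theorem \ref{llarull cone theorem}. By hypothesis (c) it already has the form $u=t^{\alpha}v(x)+O(t^{1+\alpha})$. Hence I must show that $\alpha=\alpha_0:=\tfrac{1}{2-\gamma}$ and that $v$ is a positive constant; after normalising $u$ by a constant this is $u=t^{\alpha_0}(1+O(t))$.

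\emph{Step 1: reduce to the tangent cone.} Rescale $g_\lambda=\lambda^{-2}g$ around $p$ and let $\lambda\to0$. Since $g=\mathrm{d}t^2+A^2t^2g_{\mathbb{S}^{n-1}}+g_1$ with $g_1$ small, the metrics converge to $\bar g=\mathrm{d}t^2+A^2t^2g_{\mathbb{S}^{n-1}}$. By (c), $\lambda^{-\alpha}u$ converges to $\bar u=t^\alpha v$. The quantity $t^2\Lambda$ is scale invariant, so hypothesis (a) gives in the limit $t^2\Lambda_{\bar g,\bar u}\ge\lim t^2\Lambda(g_\xi,u_\xi)$. The right side is computed on the model cone with $v\equiv1$, $\alpha=\alpha_0$. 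By \eqref{spectral descent}, using $\lambda(A^2g_{\mathbb{S}^{n-1}},1)=\tfrac{(n-1)(n-2)}{2A^2}$, it equals
\[ \tfrac12(n-1)(n-2)(A^{-2}-1)-\alpha_0\gamma(\alpha_0+n-2). \]

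\emph{Step 2: the integral inequality on the cross-section.} Apply \eqref{spectral descent} to $(\bar g,t^\alpha v)$. This gives, on $S=(\mathbb{S}^{n-1},A^2g_{\mathbb{S}^{n-1}})$,
\[ \lambda(g_S,v)\ge \tfrac{(n-1)(n-2)}{2A^2}+\gamma\bigl(\alpha(\alpha+n-2)-\alpha_0(\alpha_0+n-2)\bigr). \]
Integrate over $S$ and use
\[ \int_S -v^{-1}\Delta_S v=-\int_S|\nabla v|^2v^{-2}\le0, \]
so that for $\gamma\ge0$ the left side averages to at most $\tfrac12 R_S=\tfrac{(n-1)(n-2)}{2A^2}$. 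This yields
\[ \alpha(\alpha+n-2)\le\alpha_0(\alpha_0+n-2). \]
Equality forces $\nabla v=0$, so $v$ is constant. Thus $\alpha\in[-(n-2)-\alpha_0,\alpha_0]$, and if $\alpha=\alpha_0$ then $v$ is constant. (For $\gamma<0$ the same computation gives the reverse inequality, and the argument below is run symmetrically.)

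\emph{Step 3: exclude $\alpha<\alpha_0$. This is the main obstacle.} The spectral bound alone only gives one side, so I will use the warped $\mu$-bubble structure near $p$. The small spheres $\{t=r\}$ have weighted mean curvature
\[ H+\gamma u^{-1}u_\nu=\tfrac{n-1+\gamma\alpha}{r}+O(1), \]
whereas $\mu=m_\xi=\tfrac{n-1+\gamma\alpha_0}{r}+O(1)$. If $\alpha<\alpha_0$ (with $\gamma>0$), these spheres satisfy $H+\gamma u^{-1}u_\nu-\mu<0$ for small $r$. Consider a minimiser of \eqref{eq E band}, or simply $E$ restricted to the region $\{t\le r\}$. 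Enlarging $\Omega$ by the small ball $\{t\le r\}$ strictly decreases $E$, while near the conical point the energy cost $\int_{\{t=r\}}u^\gamma\sim r^{n-1+\gamma\alpha}$ stays finite. I would then run the argument of Section \ref{sec band} with $\partial_-M$ replaced by a tiny sphere $\{t=r\}$. This sphere now violates the lower-boundary condition in the wrong direction, and I expect this to produce a contradiction with Proposition \ref{local rigidity} and Remark \ref{non-positive eigenvalue}. Concretely, a stable warped $\mu$-hypersurface of nonzero degree would have to exist arbitrarily close to $p$ with $W$ strictly more negative than in \eqref{estimate W}, contradicting the Gauss--Bonnet/Listing bound. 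If this barrier argument proves delicate, the fallback is to use condition \eqref{unit cross-section size} to check that the negative root branch $\alpha\le-(n-2)-\alpha_0$ and the interval $(-(n-2)-\alpha_0,\alpha_0)$ are both incompatible with that bound.

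\emph{Step 4: conclude.} With $\alpha=\alpha_0$ and $v$ constant, the $O(t^{1+\alpha})$ remainder in (c) gives $u=c\,t^{\frac{1}{2-\gamma}}(1+O(t))$. Normalising the constant $c$ gives the claimed form of $u$ in Theorem \ref{llarull cone theorem}.
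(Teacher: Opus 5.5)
Your overall plan matches the paper's: a barrier argument gives $\alpha\ge\frac{1}{2-\gamma}$, and \eqref{spectral descent} on the tangent cone gives the reverse inequality together with the constancy of $v$. Your Step 2 is a valid and more elementary substitute for the paper's appeal to Theorem \ref{spectral listing theorem}. The cross-section is the round $(\mathbb{S}^{n-1},A^2g_{\mathbb{S}^{n-1}})$ with constant $R_S$, so integrating $-\gamma v^{-1}\Delta_S v$ over $S$ suffices when $\gamma>0$. Drop the parenthetical about $\gamma<0$, however. There the average of the left side is $\tfrac12R_S$ plus $|\gamma|$ times the average of $|\nabla\log v|^2$, which has no upper bound, so integrating gives no constraint on $\alpha$ at all. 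Section \ref{sec cone} works with $0<\gamma<2$ anyway.

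The gap is in Step 3, which is the only possible source of $\alpha\ge\frac{1}{2-\gamma}$. By your own Step 2 computation, the cone with $v$ constant and any $\alpha\in\bigl(-(n-2)-\frac{1}{2-\gamma},\frac{1}{2-\gamma}\bigr)$ satisfies the limiting spectral comparison strictly. So your fallback via \eqref{unit cross-section size}, which is a condition on $A$ alone, cannot rule these $\alpha$ out; a global argument is unavoidable.

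You correctly find that for such $\alpha$ and $\gamma>0$ the spheres $\{t=r\}$, with normal pointing away from $p$, have $H+\gamma u^{-1}u_\nu<\mu$. The distance non-increasing property makes $\operatorname{pr}_I\circ f$ at most about $t$, which only enlarges $\mu$ and helps here. But such a sphere does not violate the lower-boundary condition. It satisfies the condition $H+\gamma u^{-1}u_\nu\le\mu$ of Remark \ref{slightly general} strictly, i.e.\ it is a strict lower barrier.

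The contradiction comes from rigidity. Apply Theorem \ref{spec llarull band}, in the form of Remark \ref{slightly general}, to the band bounded by $\{t=r\}$ and $\partial_+M$. It forces this band to be a piece of the model, with $u$ a constant multiple of $u_\xi$ and $\{t=r\}$ a level set of $\operatorname{pr}_I\circ f$. On that level set $H+\gamma u^{-1}u_\nu=\mu$ holds with equality, which is impossible. That is exactly the paper's argument.

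Your stated mechanism does not reach this on its own. It is true that on a stable warped $\mu$-hypersurface hugging $\{t=r\}$, the term $\bigl(w_\nu-\frac{\mu}{2(n-1)-(n-2)\gamma}\bigr)^2\approx(\alpha-\frac{1}{2-\gamma})^2r^{-2}$ in $W$ would be nonzero, contradicting Proposition \ref{local rigidity}. However, the strict barrier only keeps the $\mu$-bubble of the band off $\{t=r\}$. Nothing places a stable hypersurface near $p$ unless you first run the entire foliation argument of Section \ref{sec band}, after which the boundary contradiction above is immediate. Your remarks about enlarging $\Omega$ by the ball $\{t\le r\}$ and the finiteness of its energy play no role.
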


\begin{proof}
  Let $\Sigma_t$ be the $t$-level set given by the warped product metric.
  First, if $\alpha < \bar{\alpha} := \tfrac{1}{2 - \gamma}$, then
  $H_{\Sigma_t} = (n - 1 + \alpha \gamma) t^{- 1} + o (t^{- 1})$ which implies
  that $\Sigma_t$ is a strict barrier, but by Theorem \ref{spec llarull band},
  this is impossible, so $\alpha \geqslant \bar{\alpha}$. By the comparisons
  $\Lambda_{g, u} \geqslant \Lambda_{g_{\xi}, u_{\xi}} \circ f$, and that $f$
  is distance non-increasing, then these properties were preserved by taking
  the tangent cones. Also, \eqref{spectral descent} holds up to some error
  term of order $O (t^{- 1})$. By $\alpha \geqslant \bar{\alpha}$ and the
  comparison $\Lambda_{g, u} \geqslant \Lambda_{g_{\xi}, u_{\xi}} \circ f$ in
  the limit, it follows that
  \[ \lambda (g_{\Sigma}, v) \geqslant \lambda (g_{\Sigma}, 1) . \]
  By Theorem \ref{spectral listing theorem}, $v$ is a positive constant, in
  particular, the equality holds in the above which implies $\alpha =
  \bar{\alpha}$ by \eqref{spectral descent}.
\end{proof}

\begin{proposition}
  \label{foliation naer conical point}There exists a foliation $\{\Sigma_t
  \}_{t \in (0, \varepsilon)}$ such that each leaf has constant $\eta = H +
  \gamma u^{- 1} u_{\nu} - h$.
\end{proposition}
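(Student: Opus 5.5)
We construct the foliation near the conical point $p$ by perturbing the coordinate level sets $\{t\}\times\mathbb{S}^{n-1}$ in the conical coordinates $\mathrm{d} t^2 + A^2 t^2 g_{\mathbb{S}^{n-1}} + g_1$. The tool is the implicit function theorem applied after blowing up. Here $h$ denotes the function $\mu$ of \eqref{mu}.

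\textbf{Step 1: rescaling.} For small $s>0$, rescale the metric by $g_s = s^{-2}\Phi_s^* g$, where $\Phi_s(\tau,x) = (s\tau,x)$, on the fixed annulus $[1/2,2]\times\mathbb{S}^{n-1}$. Rescale $u$ to $u_s = s^{-\bar\alpha}u\circ\Phi_s$ with $\bar\alpha = \tfrac{1}{2-\gamma}$, and rescale the operator to $s\,\eta$. By the preceding lemma, $u = t^{\bar\alpha}(1+O(t))$. The assumption on $g_1$ gives $g_s \to \bar g = \mathrm{d}\tau^2 + A^2\tau^2 g_{\mathbb{S}^{n-1}}$ and $u_s\to\bar u=\tau^{\bar\alpha}$ in $C^{2,\beta}$, with errors $O(s)$. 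Since $\xi = At+O(t^2)$, $s\,h\circ\Phi_s \to (n-1+\tfrac{\gamma}{2-\gamma})\tau^{-1}$ along $f$.

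\textbf{Step 2: the model problem.} On the model cone, the leaf $\{\tau=1\}$ has $H + \gamma\bar u^{-1}\bar u_\nu - \bar m = 0$. The radial graphs $\{\tau = 1+\phi(x)\}$ have linearised operator $-\Delta_{\mathbb{S}^{n-1}}\phi/A^2$ plus a multiple of $\phi$, and that multiple is constant in $x$. Here one checks, as in the computation behind \eqref{spectral descent}, that the zeroth order coefficient equals $\mathcal{L}$ applied to constants. This operator is invertible modulo constants.

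\textbf{Step 3: the map.} Consider
\[ (\phi, s) \mapsto \Big(\eta_s(\text{graph of }1+\phi) - \tfrac{1}{|\mathbb{S}^{n-1}|}\textstyle\int \eta_s,\ \int\phi\Big), \]
defined from $C^{2,\beta}(\mathbb{S}^{n-1})\times[0,s_0)$ into $C^{0,\beta}_0\times\mathbb{R}$. Its differential at $(0,0)$ is an isomorphism. The implicit function theorem then gives, for each small $s$ and each value of the mean $\int\phi = \theta$, a graph $\Sigma_{s,\theta}$ with constant $\eta$. Fix $\theta = 0$. Scaling back, this yields leaves $\Sigma_t$ close to $\{t\}\times\mathbb{S}^{n-1}$, each with constant $\eta$.

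\textbf{Step 4: the foliation property.} It remains to show that the leaves are disjoint and fill a punctured neighbourhood. The derivative of the family in $s$ is $\partial_\tau$ plus an $O(s)$ error in $C^1$, so $\phi_t = \langle\partial_t,\nu_t\rangle>0$ for $t\in(0,\varepsilon)$. This gives a foliation, exactly as in Proposition \ref{existence foliation}.

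\textbf{Main obstacle.} The delicate point is Step 1: uniform $C^{2,\beta}$ control of the rescaled data, in particular of $u_s$ and of $\nabla\log u$ in the weighted term. The expansion $u = t^{\alpha}v+O(t^{1+\alpha})$ must be differentiable, and we will assume it holds in $C^2$ with the natural weights. A secondary issue is that the map $f$ enters $h$ only through $\mathrm{pr}_I\circ f$. If the tangent-cone isometry does not force $\mathrm{pr}_I\circ f = t+O(t^2)$, then $h$ on the rescaled annulus converges only in $C^0$. In that case we would use the distance non-increasing property, which gives Lipschitz control, and work in $C^{1,\beta}$ for the equation $\eta=$ const, upgrading regularity afterwards.
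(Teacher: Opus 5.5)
Your proposal is correct and follows the paper's argument. You blow up at scale $t$ and observe that on the model cone $(\mathrm{d}\tau^2 + A^2\tau^2 g_{\mathbb{S}^{n-1}}, \tau^{1/(2-\gamma)})$ the linearisation of $\eta$ at $\{\tau = 1\}$ is just $-\bar{\Delta}_1$; your constant zeroth-order coefficient is in fact $0$, since by scaling every level $\{\tau = c\}$ of the model has $\eta = 0$. You then apply the implicit function theorem on mean-zero functions, as the paper does.

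The differences are minor. You perturb by $\phi$ rather than by $t v$ followed by division by $t$, so you only need $\hat{\eta}_{t, 1, 0} \to 0$ instead of the paper's asserted $\hat{\eta}_{t, 1, 0} = O(t^2)$. You also check explicitly that the leaves are disjoint and fill a punctured neighbourhood, which the paper leaves implicit.
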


Given $t > 0$, we consider $\hat{g}_t (\tau, x) = t^{- 2} g (t \tau, x) =
(\mathrm{d} \tau^2 + A^2 \tau^2 g_{\mathbb{S}^{n - 1}} + g_1 t^{- 2})$ and
$\hat{u}_t (\tau, x) = t^{- \tfrac{1}{2 - \gamma}} u (t \tau, x)$ with $\tau
\in (0, 1]$ and $x \in \mathbb{S}^{n - 1}$.

Let $\hat{\nu}_{\tau}$ be the unit normal of the $\tau$-level set,
$\hat{H}_{\tau} (x)$ denotes the mean curvature of $\tau$-level set
$\hat{\Sigma}_{\tau}$ with respect to the metric $\hat{g}$. It is clear that
$\hat{H}_{\tau} (x) = t H_{t \tau} (x)$ where $H_t (x)$ is the mean curvature
of the $t$-level set with respect to the metric $g$.

Set
\begin{equation}
  \hat{\eta}_{t, \tau} (x) = \hat{H}_{\tau} (x) + \gamma (\log
  \hat{u})_{\hat{\nu}} - \hat{h} . \label{eta hat t parameter}
\end{equation}
Note that all of the above quantities depends on $t$ and we have made some
dependence implicit. Now we consider only the $\tau$-level set
$\hat{\Sigma}_1$ and the perturbations $\hat{\Sigma}_{\tau, v}$ defined by
\[ (\tau + v (x), x) \]
where $v : \mathbb{S}^{n - 1} \to \mathbb{R}$ is a function. Denote every
geometric quantity on $\hat{\Sigma}_{\tau, v}$ with a subscript $t, \tau, v$.
Using this setup, Proposition \ref{foliation naer conical point} is equivalent
to the following.

\begin{proposition}
  For a sufficiently small $\varepsilon$, for each $t \in (0, \varepsilon)$
  there exists a $v (x, t)$, $x \in \mathbb{S}^{n - 1}$ such that
  $\hat{\Sigma}_{1, t v (\cdot, t)}$ has constant $\hat{\eta}$.
\end{proposition}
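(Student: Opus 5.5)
We will prove the statement by the implicit function theorem on the rescaled picture, treating $t$ as a parameter that can be pushed to $t=0$. Consider the map
\[ \Phi(t, v, c) := \hat{\eta}_{t, 1 + t v} - c, \qquad \Phi : [0, \varepsilon) \times C^{2,\beta}(\mathbb{S}^{n-1}) \times \mathbb{R} \to C^{0,\beta}(\mathbb{S}^{n-1}), \]
where $\hat{\eta}_{t,1+tv}$ is the quantity \eqref{eta hat t parameter} computed on the graph $\hat{\Sigma}_{1, tv}$ with respect to $(\hat g_t, \hat u_t)$. To make the equation non-degenerate, we rather consider $\Psi(t,v,c) := t^{-1}\bigl(\hat{\eta}_{t,1+tv} - \hat{\eta}_{t,1}^{\mathrm{model}}\bigr) - c$, normalising $v$ to have zero mean so that the constant $c$ absorbs the kernel. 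Here $\hat{\eta}^{\mathrm{model}}_{t,1}$ denotes the value for the tangent cone metric $\bar g = \mathrm{d}\tau^2 + A^2\tau^2 g_{\mathbb{S}^{n-1}}$ and $\bar u = \tau^{1/(2-\gamma)}$. We seek zeros of $\Psi$.

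\textbf{Step 1 (extension to $t = 0$).} As $t \to 0$, $\hat g_t \to \bar g$ because $g_1 t^{-2}$ in the rescaled coordinates is small. By the preceding Lemma, $\hat u_t \to \bar u$ with an $O(t)$ error, so $(\log \hat u_t)_{\hat\nu} \to \tfrac{1}{2-\gamma}$ plus $O(t)$. Similarly $\hat h = t\,h(t\tau) = t\, m_\xi(t\tau)$, and since $\xi = At + O(t^2)$, this tends to $(n-1+\tfrac{\gamma}{2-\gamma})\tau^{-1}$ with an $O(t)$ error. Hence $\hat\eta$ at $v=0$ is $O(t)$, the difference quotient in $\Psi$ is well defined, and $\Psi$ extends $C^1$ to $t=0$. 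This requires assuming that $g_1$ and the error term in $u$ are $O(t)$ in $C^{2,\beta}$ in the rescaled sense; this is the precise meaning we give to ``small'' and to the $O(t^{1+\alpha})$ remainder.

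\textbf{Step 2 (linearisation at $t=0$).} At $t=0$, $\Psi(0,v,c)$ is the linearisation of $\hat\eta$ on the unit cross-section of the cone model in the direction $v\hat\nu$. By Lemma \ref{first variation eta general} with the cone quantities, this is $\mathcal{L}v = -\Delta_{A^2 g_{\mathbb{S}^{n-1}}} v - \gamma\langle\nabla \bar w, \nabla^\Sigma v\rangle + (\text{const})\,v$. The tangential gradient term vanishes since $\bar u$ is radial. The zeroth-order coefficient equals $\partial_\tau(\hat H + \gamma \bar u^{-1}\partial_\tau \bar u - \hat h)$ at $\tau = 1$, namely $-(n-1) - \tfrac{\gamma}{2-\gamma} + (n-1+\tfrac{\gamma}{2-\gamma}) = 0$. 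Thus the linearised operator is $-A^{-2}\Delta_{\mathbb{S}^{n-1}} v - c$. Restricted to mean-zero $v$ together with constants $c$, it is an isomorphism onto $C^{0,\beta}$. We should double-check this cancellation; if a residual constant appears, it only shifts $c$, and condition \eqref{unit cross-section size} (non-negativity of $\mathcal{R}_{\bar g}$) guarantees that no negative eigenvalue obstructs invertibility.

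\textbf{Step 3 (conclusion).} The implicit function theorem gives $\varepsilon>0$ and $C^1$ maps $t\mapsto (v(\cdot,t), c(t))$ with $\Psi = 0$. Hence $\hat\Sigma_{1,tv(\cdot,t)}$ has constant $\hat\eta = \hat\eta^{\mathrm{model}} + t c(t)$. Scaling back gives leaves $\{(t + t^2 v(x,t), x)\}$ with constant $\eta$. These are graphs with $\partial_t$ of the radial parameter equal to $1 + O(t)$, so they form a foliation.

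\textbf{Main obstacle.} The main obstacle is Step 1: obtaining uniform $C^{2,\beta}$ control of $t^{-1}(\hat g_t - \bar g)$ and of $t^{-1}(\hat u_t - \bar u)$ so that $\Psi$ is genuinely $C^1$ up to $t = 0$. We would first try to extract this from the form $\mathrm{d}t^2 + t^2 g_S + g_1$ by imposing the natural weighted decay $|\nabla^k g_1| = O(t^{2-k+1})$ and the analogous decay on the remainder in $u$.
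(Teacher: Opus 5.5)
Your proposal is correct and follows essentially the same route as the paper: rescale to $\hat\Sigma_1$, divide $\hat\eta$ by $t$, extend to $t=0$ where the linearisation is $-\bar{\Delta}_1=-A^{-2}\Delta_{\mathbb{S}^{n-1}}$ (your check that the zeroth-order coefficient cancels is right), and apply the implicit function theorem on mean-zero $v$, with your extra unknown $c$ playing the role of the paper's subtraction of the average. One small inconsistency: Step 1 only gives $\hat\eta_{t,1,0}=O(t)$, so in fact $\Psi(0,v,c)=-A^{-2}\Delta_{\mathbb{S}^{n-1}}v+\ell-c$ with $\ell=\lim_{t\to0}t^{-1}(\hat\eta_{t,1,0}-\hat\eta^{\mathrm{model}}_{t,1})$ possibly non-constant, so you should apply the implicit function theorem at the zero $(v_0,c_0)$ solving $-A^{-2}\Delta_{\mathbb{S}^{n-1}}v_0=c_0-\ell$ rather than at $(0,0)$. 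This is harmless, since the $(v,c)$-derivative is unchanged; the paper avoids the issue only by asserting $\hat\eta_{t,1,0}=O(t^2)$.
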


\begin{proof}
  By the first variation \eqref{first var crude} and the Taylor expansion of
  $\hat{\eta}_{1, t v}$ with respect to $t$,
\begin{align}
& (\hat{\eta}_{t, 1, t v} - \hat{\eta}_{t, 1, 0}) t^{- 1} \\
= & - \Delta_t v - | \hat{A} |_{\hat{g}}^2 v
-\ensuremath{\operatorname{Ric}}_{\hat{g}} (\hat{\nu}_1, \hat{\nu}_1) v -
\gamma \hat{u}^{- 2} \hat{u}_{\hat{\nu}}^2 v \\
& \quad + \gamma \hat{u}^{- 1} (\Delta_{\hat{g}} \hat{u} - \Delta_t
\hat{u} - \hat{H}_1 \hat{u}) v - \gamma \hat{u}^{- 1} \langle \hat{\nabla}
\hat{u}, \hat{\nabla} v \rangle \\
& \qquad - \langle \hat{\nabla} \hat{h}, \hat{\nu}_1 \rangle v +
\hat{\partial}_{\tau}^{\top} \hat{\eta} + O (t) .
\end{align}
  By the convergence of $\hat{g}$ to $\mathrm{d} \tau^2 + A^2 \tau^2
  g_{\mathbb{S}^{n - 1}}$ and $\hat{u}$ to $\tau^{\tfrac{1}{2 - \gamma}}$,
  \begin{equation}
    (\hat{\eta}_{t, 1, t v} - \hat{\eta}_{t, 1, 0}) t^{- 1} = - \bar{\Delta}_1
    v + O (t) . \label{taylor reduced}
  \end{equation}
  By the convergence, $\hat{\eta}_{t, 1, 0} = O (t^2)$ and hence
  \[ \hat{\eta}_{t, 1, t v} t^{- 1} = - \bar{\Delta}_1 v + O (t) . \]
  Due to the convergence as $t \to 0$,
  \begin{equation}
    \hat{\eta}_{t, 1, t v} t^{- 1} = - \bar{\Delta}_1 v + O (t) . \label{limit
    perturb}
  \end{equation}
  Fix a sufficiently small positive number $\varepsilon$, and define the
  mapping
  \begin{equation}
    F (t, v) = \tfrac{\hat{\eta}_{t, 1, t v}}{t} - | \hat{\Sigma}_1 |^{- 1}
    \int_{\hat{\Sigma}_1} \tfrac{\hat{\eta}_{t, 1, t v}}{t}, \label{F t v}
  \end{equation}
  where the integration is with respect to the metric $\mathrm{d} \tau^2 + A^2
  \tau^2 g_{\mathbb{S}^{n - 1}}$ and $| \hat{\Sigma}_1 |$ is the volume of
  $\hat{\Sigma}_1$ with respect to the metric $\mathrm{d} \tau^2 + A^2 \tau^2
  g_{\mathbb{S}^{n - 1}}$. The space of $C^{k, \alpha}_0 (\hat{\Sigma}_1)$ is
  the subspace of $C^{k, \alpha} (\hat{\Sigma}_1)$ with zero averages. We
  extend the definition of $F (t, v)$ to $t = 0$ by taking limits and we see
  that $F (0, v) = - \bar{\Delta}_1 v$ from \eqref{limit perturb}. Evidently,
  \[ D F_{(0, 0)} (0, v) = \tfrac{\partial}{\partial s} F (0, s v) |_{s = 0} =
     - \bar{\Delta}_1 v. \]
  This says that $D F_{(0, 0)}$ is an isomorphism restricted to $C^{0,
  \alpha}_0 (\hat{\Sigma}_1)$. By the implicit function theorem, there exists
  a family of functions $v (\cdot, \text{} t)$ for all small $t$ that $F (t, v
  (\cdot, t)) = 0$, which gives that $\hat{\eta}_{t, 1, t v (\cdot, t)}$ is a
  constant for such $v (\cdot, t)$.
\end{proof}

Now we need to determine the sign of $\hat{\eta}$.

\begin{proposition}
  Under the assumption of isometric tangent cones and the scalar curvature
  comparison, $\hat{\eta} (t) := \hat{\eta}_{t, 1, t v (\cdot, t)} \leqslant
  0$ for all sufficiently small $t$ where $v_t$ are the functions constructed
  above.
\end{proposition}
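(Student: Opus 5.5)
We determine the sign of $\hat{\eta}(t)$ by a monotonicity argument along the foliation, following the proof of Proposition \ref{sign of leaf}. The new feature near the conical point is that no stable warped $\mu$-hypersurface is available to anchor the ODE at $\eta = 0$. Instead, we anchor it at $t \to 0$ and use the tangent cone asymptotics, which set the constant of integration.

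\textbf{Step 1: the differential inequality.} Let $\Sigma_t$ be the leaf in the original scale corresponding to $\hat{\Sigma}_{1, t v(\cdot,t)}$, and write $\eta(t) = \hat{\eta}(t)/t$ for its constant value of $H + \gamma u^{-1}u_\nu - \mu$. I would repeat the computation \eqref{variation eta t}--\eqref{ode for eta} verbatim. Insert the lower bound $\Gamma\mu^2 + \mu_\nu + \Lambda \geqslant \tfrac12(n-1)(n-2)\xi^{-2}$ from the proof of Proposition \ref{local rigidity}; this uses only the comparison (a), distance non-increase, and $m_\xi' < 0$. Then divide by $\phi_t > 0$, test against the first eigenfunction $\psi_t$ of $\tilde{L}_t$ (or of $\hat{L}$ when $n = 4$), and apply Lemma \ref{rewrite of L}. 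Remark \ref{non-positive eigenvalue} guarantees that the eigenvalue term is non-positive, because $f_{\Sigma_t}$ has non-zero degree (each leaf is a graph over a small sphere around $p$). This yields
\[ \tfrac{\mathrm{d}}{\mathrm{d} t}\Big(\eta(t)\exp\big(\textstyle\int^t Q(s)\,\mathrm{d}s\big)\Big) \leqslant 0 . \]

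\textbf{Step 2: asymptotics at $t \to 0$.} Here the normalisation of $Q$ matters. From \eqref{Z}, $q_t = \tfrac{1}{n-1}(n\mu - \gamma w_\nu)$, and on the model cone $\mu \sim m_\xi \sim c_1/t$ and $w_\nu \sim \tfrac{1}{(2-\gamma)t}$. Hence $Q(s) \sim \kappa/s$ for an explicit $\kappa > 0$, so $\exp(\int^t Q) \sim t^{\kappa}$ up to a constant. By the convergence of $\hat{g}_t$ and $\hat{u}_t$ to the cone model, we have $\hat{\eta}_{t,1,0} = O(t^2)$ and, from \eqref{limit perturb}, $\hat{\eta}(t) = O(t^2)$. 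Therefore $\eta(t) = O(t)$, and $\eta(t)t^{\kappa} \to 0$ as $t \to 0$. Monotonicity then gives $\eta(t)t^{\kappa} \leqslant \lim_{s\to 0}\eta(s)s^{\kappa} = 0$, so $\hat{\eta}(t) \leqslant 0$.

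\textbf{Main obstacle.} The delicate part is that $\phi_t$, $Q$ and the quantities in Step 1 degenerate as $t \to 0$. One must check that the error terms $O(t)$ coming from $g_1$ and from $u = t^{1/(2-\gamma)}(1+O(t))$ (the preceding lemma) are dominated by the leading $t^{\kappa}$ behaviour. Equivalently, one must confirm that $\eta(t)t^{\kappa}$ genuinely tends to zero rather than only staying bounded; this requires $\kappa > -1$, which holds since $\kappa > 0$.

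To verify this, I would rescale everything to $\hat{\Sigma}_1$ and work with the rescaled quantities, which converge smoothly. The role of assumption \eqref{unit cross-section size} is to ensure that the model leaves satisfy the nonpositive-eigenvalue property, via the spectral Listing Theorem \ref{spectral listing theorem} applied to the cross-section $A\,\mathbb{S}^{n-1}$. With $A > 1$ this is not automatic, and there I would use the identity \eqref{spectral descent} with $\alpha = \tfrac{1}{2-\gamma}$ to confirm that the limit eigenvalue is $\leqslant 0$.
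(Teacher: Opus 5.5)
Your argument is correct, but it anchors the monotonicity at the tip differently from the paper.

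\textbf{The paper's route.} The paper first determines the sign of the limiting first-order term $\lim_{t\to0}t^{-1}\hat\eta(t)$. It multiplies \eqref{taylor reduced} by $\bar u^2$, which is constant on $\hat\Sigma_1$. It then reads $t^{-1}\hat\eta_{t,1,0}$ as a difference of variations of the weighted mean curvature for the pairs $(\hat g_t,\hat u_t)$ and $(\hat g_{\xi,t},\hat u_{\xi,t})$. Finally it applies the integral identity of Proposition \ref{variational principle} and Corollary \ref{infinitesimal comparison}. This step needs $\mathcal R_{\bar g}\geqslant 0$ and $\mathcal A_{\bar g}\geqslant0$ on the tangent cone, and that is where \eqref{unit cross-section size} enters. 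Only after this does the paper invoke the ODE of Proposition \ref{sign of leaf}.

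\textbf{Your route.} You notice that the integrating factor itself degenerates at the tip. Since $\mathrm{pr}_I\circ f\leqslant d_g(\cdot,p)$ and $m_\xi$ is decreasing, one has $\mu\gtrsim (n-1+\frac{\gamma}{2-\gamma})/t$. Together with $w_\nu\approx\frac{1}{(2-\gamma)t}$ and lapse $\phi_t\to1$, this gives $Q(t)\geqslant \kappa'/t$ with $\kappa'$ close to $n+\frac{\gamma}{2-\gamma}>0$. Hence $\exp(\int_{t_0}^t Q)\to 0$ as $t\to0$, and $\eta(t)\exp(\int_{t_0}^tQ)$ tends to $0$ for any $\eta$ that is bounded above. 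Monotonicity then yields $\eta\leqslant 0$.

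\textbf{Comparison.} Your version is shorter and bypasses the weighted variational principle entirely. It does not appear to need \eqref{unit cross-section size} at this step, and it implies the paper's limiting inequality $\lim_{t\to 0}t^{-1}\hat\eta(t)\leqslant0$ as a by-product. The paper's route gives explicit first-order information at the tip, but that is not required for the sign.

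\textbf{Two corrections, neither fatal.}
\begin{enumerate}
\item Your claim $\hat\eta_{t,1,0}=O(t^2)$, and hence $\eta(t)=O(t)$, is not justified in general. With $O(t)$ convergence of $\hat g_t$, $\hat u_t$ and $\hat h$, the quantity $t^{-1}\hat\eta_{t,1,0}$ converges to a generally non-zero first variation, which is exactly the term the paper has to analyse. What you actually have is $\eta(t)=t^{-1}\hat\eta(t)=O(1)$, which follows from $F(t,v)$ in \eqref{F t v} extending continuously to $t=0$. That suffices, because $\kappa'>0$ rather than merely $\kappa'>-1$.
\item Your final paragraph misattributes the role of \eqref{unit cross-section size}. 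The non-positivity of the first eigenvalue of $\tilde L_t$ (Remark \ref{non-positive eigenvalue}) holds for every leaf with $f_{\Sigma_t}$ of non-zero degree and for any $A$. It follows from Gauss--Bonnet or Listing's theorem together with $g_{\Sigma_t}\geqslant \xi^2(\mathrm{pr}_I\circ f)\, f_{\Sigma_t}^{\ast}g_{\mathbb{S}^{n-1}}$. No spectral Listing argument on $A\,\mathbb{S}^{n-1}$ is needed there.
\end{enumerate}
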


Before proving this proposition, we discuss the previous strategy (cf.
{\cite{chai-scalar-2025}}): we integrate \eqref{taylor reduced} on
$\hat{\Sigma}_1$ with respect to the metric $\mathrm{d} \tau^2 + A^2 \tau^2
g_{\mathbb{S}^{n - 1}}$,
\begin{equation}
  t^{- 1} \hat{\eta} (t) = \int_{\hat{\Sigma}_1} \hat{\eta}_{t, 1, 0} t^{- 1}
  + O (t) . \label{integrate taylor reduced}
\end{equation}
The limit of $\hat{\eta}_{1, 0}$ is zero and note that the term
$\hat{\eta}_{1, 0} t^{- 1}$ is a first variation of \eqref{eta hat t
parameter} at $t = 0$ for the metric $\hat{g}_t$ and the functions
$\hat{u}_t$. As easily seen, we have a comparison of the metric by the
distance non-increasing property, but there is no comparison of the functions
$\hat{u}_t$. It turns out that we can remedy the issue by considering the
weighted version of \eqref{integrate taylor reduced}.

We introduce some notations. Let $g_t$ and $u_t$ be a family of metrics and
functions index by $t \in (- \varepsilon {,} \varepsilon)$ such that $g_0 = g$
and $u_0 = u$. Denote $\delta g = \lim_{t \to 0} t^{- 1} g_t$ and $\delta u =
\lim_{t \to 0} t^{- 1} u_t$ (derivatives at $t = 0$ of $g_t$ and $u_t$). Let
$G (g, u)$ be any geometric quantity which depends on $g$ and $u$ and $G (g_t,
u_t)$ be the corresponding geometric quantity computed with respect to $g_t$
and $u_t$, then $\delta_u G$, the variation of $G$ with only $u$ varying is
defined to be $\lim_{t \to 0} t^{- 1} (G (g, u_t) - G (g, u))$. We can define
similarly $\delta_g G$. Let $(g_t^{(i)}, u_t^{(i)})$, $i = 1, 2$ be any two
families of metrics and functions, we use $\delta^{(i)}$ to denote the
variation is taken with respect to each family index by $i$.

\begin{proposition}
  \label{variational principle}Let $g_t$ and $u_t$ be a family of metrics and
  functions on a manifold $M$ with non-empty boundary $\partial M$, then
  \begin{equation}
    \int_{\partial M} \delta_u (H + \gamma u^{- 1} u_{\nu}) u^2 + \int_M u^2
    \delta_u \Lambda_g = 0, \label{var wrt u}
  \end{equation}
  and
  \begin{equation}
    \int_{\partial M} \delta_g (H + \gamma u^{- 1} u_{\nu}) u^2 + \int_M u^2
    \delta_g \Lambda_g = - \tfrac{1}{2} \int_M u^2 \langle \mathcal{R}_g,
    \delta g \rangle - \tfrac{1}{2} \int_{\partial M} u^2 \langle
    \mathcal{A}_g, \delta g \rangle . \label{var wrt g}
  \end{equation}
  Here, the integration is with respect to the metric $g$ and the tensors
  $\mathcal{R}$ and $\mathcal{A}$ are defined by
\begin{align}
\mathcal{R}_g & =\ensuremath{\operatorname{Ric}}_g - 2 \gamma
\tfrac{\nabla^2 u}{u} + 2 \gamma \frac{\nabla (u \nabla u)}{u^2}
\label{mod ric} \\
& \quad - \gamma \frac{\nabla_k (u \nabla^k u)}{u^2} g - \frac{\nabla^2
u^2}{u^2} + \frac{\Delta_g u^2}{u^2} g, \\
\mathcal{A}_g = & A_{\partial M} - (2 - \gamma) u^{- 1} u_{\nu}
g_{\partial M}
\end{align}
  where $\nabla u$ is understood as a 1-form $(\nabla u) (e_i) = \nabla_{e_i}
  u$.{\foldedcomment{+MkSbPiR1vSFUVb7}{+MkSbPiR1vSFUVb8}{comment}{bk21}{1766931986}{}{{\tt
    \trivlist{\item[\color{rgb:black,10;red,9;green,4;yellow,2}{\color{red}{(\%i
    1)
    }}]\mbox{}{\color{blue!50!black}b:2*(1-s)-g}\item[]\mbox{}$\text{\text{{\ttfamily{{\color{red}{(\%o1)
    }}}}}} 2 (1 - s) - g$}
    \trivlist{\item[\color{rgb:black,10;red,9;green,4;yellow,2}{\color{red}{(\%i
    2)
    }}]\mbox{}{\color{blue!50!black}a:1/b}\item[]\mbox{}$\text{\text{{\ttfamily{{\color{red}{(\%o2)
    }}}}}} \frac{1}{2 (1 - s) - g}$}
    \trivlist{\item[\color{rgb:black,10;red,9;green,4;yellow,2}{\color{red}{(\%i
    7)
    }}]{\color{blue!50!black}\mbox{}f:-2*g*a*(a-1)+2*s*g*a\^{}2+2*g*(1-2*s)*a\^{}2+2*g*a*(a-1)
    
    \ \ \ \ \ \ -g*((1-2*s)*a\^{}2+a*(n+a-2))
    
    \ \ \ \ \ \ -(2-2*s)*a*((2-2*s)*a-1)
    
    \ \ \ \ \ \ +(2-2*s)*a*(n+(2-2*s)*a-2)\$}}
    \trivlist{\item[\color{rgb:black,10;red,9;green,4;yellow,2}{\color{red}{(\%i
    8) }}]{\color{blue!50!black}\mbox{}\ }}
    \trivlist{\item[\color{rgb:black,10;red,9;green,4;yellow,2}{\color{red}{(\%i
    8) }}]\mbox{}{\color{blue!50!black}\%,factor}\item[]\mbox{}\
    
    $\text{\text{{\ttfamily{{\color{red}{(\%o8) }}}}}} \frac{2 ns - 2 s + gn
    - 2 n - 2 g + 2}{2 s + g - 2}$}
    \trivlist{\item[\color{rgb:black,10;red,9;green,4;yellow,2}{\color{red}{(\%i
    6)
    }}]\mbox{}{\color{blue!50!black}\%,factor}\item[]\mbox{}$\text{\text{{\ttfamily{{\color{red}{(\%o6)
    }}}}}} \frac{2 ns - 2 s + gn - 2 n - 2 g + 2}{2 s + g - 2}$}
    \trivlist{\item[\color{rgb:black,10;red,9;green,4;yellow,2}{\color{red}{(\%i
    9) }}]{\color{blue!50!black}\mbox{}z:-(2-2*s)*a*((2-2*s)*a-1)
    
    \ \ \ \ \ \ +(2-2*s)*a*(n+(2-2*s)*a-2)\$}}
    \trivlist{\item[\color{rgb:black,10;red,9;green,4;yellow,2}{\color{red}{(\%i
    10)
    }}]\mbox{}{\color{blue!50!black}\%,factor}\item[]\mbox{}$\text{\text{{\ttfamily{{\color{red}{(\%o10)
    }}}}}} \frac{2 (n - 1)  (s - 1)}{2 s + g - 2}$}
    \trivlist{\item[\color{rgb:black,10;red,9;green,4;yellow,2}{\color{red}{(\%i
    11) }}]{\color{blue!50!black}\mbox{}\ }}
  }}}
\end{proposition}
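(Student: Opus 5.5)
Both identities are integration-by-parts statements. The plan is to compute the pointwise linearisations of $\Lambda_{g,u}$ and of the weighted mean curvature $H+\gamma u^{-1}u_\nu$, multiply by $u^2$, and integrate. In each case the interior divergence terms should cancel exactly against the boundary variations. Throughout, the measures $\mathrm{d}V_g$ and $\mathrm{d}A_g$ are frozen, as in the statement, so only the integrands are varied. $\nu$ is the outward normal of $\partial M$, which is the convention that makes Green's formula produce the boundary term with the correct sign.

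For \eqref{var wrt u}, with $g$ fixed and $\delta u=\dot u$, we have
\[ \delta_u\Lambda=-\gamma u^{-1}\Delta\dot u+\gamma u^{-2}\dot u\,\Delta u, \qquad \delta_u(H+\gamma u^{-1}u_\nu)=\gamma u^{-1}\dot u_\nu-\gamma u^{-2}\dot u\,u_\nu, \]
since $H$ does not depend on $u$. Hence
\[ \int_M u^2\delta_u\Lambda=\gamma\int_M(\dot u\Delta u-u\Delta\dot u)=\gamma\int_{\partial M}(\dot u\,u_\nu-u\,\dot u_\nu) \]
by Green's second identity. The boundary integrand $u^2\delta_u(H+\gamma u^{-1}u_\nu)=\gamma(u\dot u_\nu-\dot u u_\nu)$ is exactly the negative of this, so the sum vanishes. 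This step is routine.

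For \eqref{var wrt g}, fix $u$ and write $h=\delta g$. I would use the standard formulas
\[ \delta R=-\Delta\operatorname{tr}h+\operatorname{div}\operatorname{div}h-\langle\operatorname{Ric},h\rangle, \qquad \delta(\Delta u)=-\langle h,\nabla^2u\rangle-\langle\operatorname{div}h-\tfrac12\nabla\operatorname{tr}h,\nabla u\rangle. \]
For the boundary I would use the variation of the unit normal, $\delta\nu=-\tfrac12h(\nu,\nu)\nu-h(\nu,\cdot)^{\top\sharp}$, which gives
\[ \delta u_\nu=-\tfrac12h(\nu,\nu)u_\nu-h(\nu,\nabla^{\partial M}u), \]
together with the standard formula
\[ 2\delta H=\nabla_\nu\operatorname{tr}h-(\operatorname{div}h)(\nu)-\operatorname{div}_{\partial M}\bigl(h(\nu,\cdot)^\top\bigr)-\langle A,h\rangle-h(\nu,\nu)H. \]
Then I multiply $\delta\Lambda=-\gamma u^{-1}\delta(\Delta u)+\tfrac12\delta R$ by $u^2$ and integrate by parts twice to move every derivative off $h$. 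The term $\tfrac12u^2(\operatorname{div}\operatorname{div}h-\Delta\operatorname{tr}h)$ produces $\tfrac12\langle\nabla^2u^2-\Delta u^2 g,h\rangle$. The $\gamma$-terms, $\gamma u\langle\operatorname{div}h-\tfrac12\nabla\operatorname{tr}h,\nabla u\rangle$, after one integration by parts give the pieces $\gamma u^{-2}\nabla(u\nabla u)$ and $\gamma u^{-2}\nabla_k(u\nabla^ku)g$ of $\mathcal{R}_g$. Collecting everything should reproduce $-\tfrac12\langle\mathcal R_g,h\rangle u^2$ in the interior, matching \eqref{mod ric} term by term.

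The main obstacle is the boundary bookkeeping. The boundary terms generated by these integrations by parts involve $\nabla_\nu\operatorname{tr}h$, $(\operatorname{div}h)(\nu)$, $h(\nu,\nu)$, $h(\nu,\nabla u)$ and normal derivatives of $u^2$. They must be combined with $u^2\delta_g(H+\gamma u^{-1}u_\nu)$. The terms carrying derivatives of $h$ should cancel exactly against those in $2\delta H$; this is the usual Gibbons--Hawking-type cancellation. The tangential divergence $\operatorname{div}_{\partial M}(h(\nu,\cdot)^\top)$, after being paired with $u^2$, integrates by parts on the closed manifold $\partial M$ and absorbs the mixed terms $h(\nu,\nabla^{\partial M}u)$. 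The $h(\nu,\nu)$ terms should cancel completely. What survives should be $-\tfrac12u^2\langle A-(2-\gamma)u^{-1}u_\nu g_{\partial M},h\rangle$, that is, $-\tfrac12u^2\langle\mathcal A_g,h\rangle$. I would carry out this cancellation carefully in an adapted frame $\{e_1,\dots,e_{n-1},\nu\}$, splitting $h$ into its tangential, mixed and normal-normal parts, and checking each part separately.
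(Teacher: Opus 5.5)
Your plan follows the same route as the paper: linearise $\Lambda_{g,u}$ and $H+\gamma u^{-1}u_\nu$ with the standard formulas, weight by $u^2$, integrate by parts, and collect terms. Your proof of \eqref{var wrt u} is complete and correct, and your interior bookkeeping does produce $-\tfrac12u^2\langle\mathcal R_g,h\rangle$.

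The error is in the first variation of the mean curvature you quote: the term $-h(\nu,\nu)H$ should not be there. The correct formula, which is the one the paper uses, is
\[ 2\delta H=\nabla_\nu\operatorname{tr}h-(\operatorname{div}h)(\nu)-\operatorname{div}_{\partial M}\bigl(h(\nu,\cdot)^\top\bigr)-\langle A,h\rangle . \]
The normal--normal contribution is already contained in $(\operatorname{div}h)(\nu)$, because $\sum_a(\nabla_{e_a}h)(e_a,\nu)=\operatorname{div}_{\partial M}(h(\nu,\cdot)^\top)+H\,h(\nu,\nu)-\langle A,h\rangle$. A quick test: for $h=2g$ one has $H_t=(1+2t)^{-1/2}H$, so $\delta H=-H$, whereas your formula gives $\delta H=-2H$.

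If you run your plan with your formula, the $h(\nu,\nu)$ terms do not cancel. A residual $-\tfrac12\int_{\partial M}u^2H\,h(\nu,\nu)$ survives, which is not of the form $\langle\mathcal A_g,h\rangle$ with $\mathcal A_g$ tangential. So the boundary cancellation you flag as the main obstacle would fail as written.

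With the correct formula the cancellation goes through exactly as you predict. The boundary terms produced by the interior integrations by parts have integrand
\[ (\gamma-1)u\,h(\nu,\nabla u)+(1-\tfrac{\gamma}{2})uu_\nu\operatorname{tr}h+\tfrac12u^2\bigl((\operatorname{div}h)(\nu)-\nabla_\nu\operatorname{tr}h\bigr). \]
The term $u^2\delta_g(H+\gamma u^{-1}u_\nu)$, after moving $\operatorname{div}_{\partial M}$ onto $u^2$ on the closed manifold $\partial M$, contributes
\[ \tfrac12u^2\bigl(\nabla_\nu\operatorname{tr}h-(\operatorname{div}h)(\nu)\bigr)-\tfrac12u^2\langle A,h\rangle+(1-\gamma)u\,h(\nu,\nabla^{\partial M}u)-\tfrac{\gamma}{2}uu_\nu h(\nu,\nu). \]
Now write $\nabla u=\nabla^{\partial M}u+u_\nu\nu$ and $\operatorname{tr}h=\operatorname{tr}_{\partial M}h+h(\nu,\nu)$. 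The derivative terms cancel, and so do the mixed terms. The coefficient of $uu_\nu h(\nu,\nu)$ is $(\gamma-1)+(1-\tfrac{\gamma}{2})-\tfrac{\gamma}{2}=0$. What remains is $-\tfrac12u^2\langle A,h\rangle+(1-\tfrac{\gamma}{2})uu_\nu\operatorname{tr}_{\partial M}h=-\tfrac12u^2\langle\mathcal A_g,h\rangle$, as claimed in \eqref{var wrt g}.
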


\begin{proof}
  Since both $\delta_u H$ and $\delta_u R_g$ vanish, \eqref{var wrt u} is
  equivalent to
  \[ \int_{\partial M} \delta_u (u^{- 1} u_{\nu}) u^2 + \int_M \delta_u (-
     u^{- 1} \Delta_g u) u^2 = 0. \]
  By direct calculation and integration by parts,
\begin{align}
& \int_M \delta_u (- u^{- 1} \Delta_g u) u^2 \\
= & - \int_M (- u^{- 2} \delta u \Delta_g u + u^{- 1} \Delta_g (\delta u))
u^2 \\
= & \int_M (\delta u \Delta_g u - \delta u \Delta_g u^1) \\
& \quad + \int_{\partial M} (- u  (\delta u)_{\nu} + (u)_{\nu} \delta u)
\end{align}
  The interior terms of the above vanish and the boundary term is precisely
  \[ - \int_M u^2 \delta_u (u^{- 1} u_{\nu}) = - \int_M u^2 (- u^{- 2} \delta
     u u_{\nu} + u^{- 1} (\delta u)_{\nu}) . \]
  Hence, \eqref{var wrt u} is proved.
  
  The calculation of \eqref{var wrt g} is more involved but direct. We start
  with the well known variation
  \[ \delta R_g = - \langle \ensuremath{\operatorname{Ric}}, \delta g \rangle
     +\ensuremath{\operatorname{div}}^2_g (\delta g) - \Delta_g
     \ensuremath{\operatorname{tr}}_g (\delta g) \]
  of $R_g$ and the variation
  \[ 2 \delta_g H_g = (\mathrm{d} (\ensuremath{\operatorname{tr}}_g (\delta
     g)) -\ensuremath{\operatorname{div}}_g (\delta g)) (\nu)
     -\ensuremath{\operatorname{div}}_{\partial M} (\delta g (\cdot,
     \nu))^{\top} - \langle A_{\partial M}, \delta g \rangle \]
  of the mean curvature $H_g$. For the variation $\delta_g (u^{- 1} \Delta_g
  u)$, we need the variation of Christoffel symbols $\Gamma_{i j}^k$ of $g$,
  which is given by
  \[ \delta_g \Gamma_{i j}^k = \tfrac{1}{2} g^{k l} (\nabla_i (\delta g)_{j l}
     + \nabla_j (\delta g)_{i l} - \nabla_l (\delta g)_{i j}), \]
  and it yields
\begin{align}
& \delta_g (\Delta_g u) = \delta_g (g^{i j} \nabla_i \nabla_j u)
\\
= & \delta_g (g^{i j} (\partial_i \partial_j u - \Gamma_{i j}^k \partial_k
u)) \\
= & - \delta_g g^{i j} (\partial_i \partial_j u - \Gamma_{i j}^k
\partial_k u) - g^{i j} \delta_g \Gamma_{i j}^k \partial_k u \\
= & - \langle \delta g, \nabla^2 u \rangle - \tfrac{1}{2} \nabla^k u (2
\nabla^i (\delta g)_{i k} - \nabla_k (\ensuremath{\operatorname{tr}}_g
(\delta g))) .
\end{align}
  The variation $\delta_g (| \nabla u|^2) = - \langle \delta g, \nabla u
  \otimes \nabla u \rangle$. Hence,
\begin{align}
& \int_M u^2 \delta_g \Lambda \\
= & \int_M \delta_g (- \gamma u^{- 1} \Delta_g u + \tfrac{1}{2} R_g) u^{2
- 2 \sigma} \\
= & - \gamma \int_M u  (- \langle \delta g, \nabla^2 u \rangle -
\tfrac{1}{2} \nabla^k u (2 \nabla^i (\delta g)_{i k} - \nabla_k
(\ensuremath{\operatorname{tr}}_g (\delta g)))) \\
& \quad + \tfrac{1}{2} \int_M u^2 (- \langle
\ensuremath{\operatorname{Ric}}, \delta g \rangle
+\ensuremath{\operatorname{div}}^2_g (\delta g) - \Delta_g
\ensuremath{\operatorname{tr}}_g (\delta g)) \\
= & \gamma \int_M u \langle \delta g, \nabla^2 u \rangle \\
& \quad + \gamma \int_{\partial M} u (\delta g) (\nabla u, \nu) - \gamma
\int_M (\delta g)_{i k} \nabla^i (u \nabla^k u) \\
& \quad - \tfrac{\gamma}{2} \int_{\partial M} u u_{\nu}
\ensuremath{\operatorname{tr}}_g (\delta g) + \tfrac{\gamma}{2} \int_M
\nabla_k (u \nabla^k u) \ensuremath{\operatorname{tr}}_g (\delta g)
\\
& \quad - \tfrac{1}{2} \int_M u^2 \langle
\ensuremath{\operatorname{Ric}}, \delta g \rangle \\
& \quad + \tfrac{1}{2} \int_{\partial M} (u^2
\ensuremath{\operatorname{div}}_g (\delta g) (\nu) - (\delta g) (\nabla
u^2, \nu)) + \tfrac{1}{2} \int_M \langle \nabla^2 u^2, \delta g \rangle
\\
& \quad + \tfrac{1}{2} \int_{\partial M} (\nabla_{\nu} u^2
\ensuremath{\operatorname{tr}}_g (\delta g) - u^2 \nabla_{\nu}
\ensuremath{\operatorname{tr}}_g (\delta g)) - \tfrac{1}{2} \int_M
\ensuremath{\operatorname{tr}}_g (\delta g) \Delta_g u^2 .
\end{align}
  In the last equality, we have used integration by parts so that the interior
  terms do not contain the derivatives of $\delta g$.
  
  Now we turn to $\delta_g (H + \gamma u^{- 1} u_{\nu})$. Recall that
  \[ 2 \delta_g H = (\mathrm{d} (\ensuremath{\operatorname{tr}}_g (\delta g))
     -\ensuremath{\operatorname{div}}_g (\delta g)) (\nu)
     -\ensuremath{\operatorname{div}}_{\partial M} (\delta g (\nu,
     \cdot))^{\top} - \langle \delta g, A_{\partial M} \rangle, \]
  and $\delta_g (\nu^j) = - ((\delta g (\nu, \cdot))^{\top})^j - \tfrac{1}{2}
  \delta g (\nu, \nu) \nu^j$ (see {\cite{miao-mass-2022}}). Here, $\delta g
  (\nu, \cdot)^{\top}$ is understood as the dual vector field of the
  tangential component of the 1-form $\delta g (\nu, \cdot)$. So
\begin{align}
& \int_{\partial M} u^2 \delta_g (H + \gamma u^{- 1} u_{\nu}) \\
= & \tfrac{1}{2} \int_{\partial M} u^2 ((\mathrm{d}
\ensuremath{\operatorname{tr}}_g (\delta g)
-\ensuremath{\operatorname{div}}_g (\delta g)) (\nu)
-\ensuremath{\operatorname{div}}_{\partial M} (\delta g (\nu,
\cdot))^{\top} - \langle \delta g, A_{\partial M} \rangle) \\
& \quad - \gamma \int_{\partial M} u \delta g (\nu, \nabla^{\partial M}
u) - \tfrac{\gamma}{2} \int_{\partial M} u \delta g (\nu, \nu) u_{\nu}
\\
= & \tfrac{1}{2} \int_{\partial M} u^2 ((\mathrm{d}
\ensuremath{\operatorname{tr}}_g (\delta g)
-\ensuremath{\operatorname{div}}_g (\delta g)) (\nu) - \langle \delta g,
A_{\partial M} \rangle) + \tfrac{1}{2} \int_{\partial M} \delta g (\nu,
\nabla^{\partial M} u^2) \\
& \quad - \gamma \int_{\partial M} u \delta g (\nu, \nabla^{\partial M}
u) - \tfrac{\gamma}{2} \int_{\partial M} u \delta g (\nu, \nu) u_{\nu} .
\end{align}
  In the last equality, we have used integration by parts on $\partial M$. By
  simply collecting $\int_M u^2 \delta_g \Lambda$ and $\int_{\partial M}
  \delta_g (H + \gamma u^{- 1} u_{\nu})$,
  \[ \int_{\partial M} \delta_g (H + \gamma u^{- 1} u_{\nu}) u^2 + \int_M u^2
     \delta_g \Lambda_g = \tfrac{1}{2} \int_M u^2 \mathcal{R}_g + \tfrac{1}{2}
     \int_{\partial M} u^2 \mathcal{A}_1 \]
  with $\mathcal{A}_1$ given by
\begin{align}
\mathcal{A}_1 & = A_{\partial M} - 2 \gamma u^{- 1} \nabla u \otimes \nu +
\gamma u^{- 1} u_{\nu} g + \tfrac{\nabla u^2}{u^2} \otimes \nu \\
& \quad - \frac{\nabla_{\nu} u^2}{u^2} g - \frac{\nabla^{\partial M}
u^2}{u^2} \otimes \nu + 2 \gamma u^{- 1} \nu \otimes \nabla^{\partial M} u
+ \gamma u^{- 1} \nu \otimes \nu .
\end{align}
  Checking that $\mathcal{A}_1 =\mathcal{A}_g$ finishes the proof the
  proposition.
\end{proof}

By taking the difference of two pairs $(g_t^{(1)}, u_t^{(1)})$ and
$(g_t^{(2)}, u_t^{(2)})$ which tend to the same limit, we easily obtain the
following.

\begin{corollary}
  \label{infinitesimal comparison}Let $(M, g)$ be a smooth manifold with
  non-empty boundary and $u$ be a smooth function on $M$ such that
  $\mathcal{R}_g \geqslant 0$ and $\mathcal{A}_g \geqslant 0$. Assume that the
  pair $(g_t^{(1)}, u_t^{(1)})$ and $(g_t^{(2)}, u_t^{(2)})$ which tends to
  the same limit $(g, u)$ as $t \to 0$, and $g_t^{(1)} \geqslant g_t^{(2)}$
  for all $t$ small, then
  \begin{equation}
    \int_{\partial M} (\delta_u^{(2)} (H + \gamma u^{- 1} u_{\nu}) -
    \delta_u^{(1)} (H + \gamma u^{- 1} u_{\nu})) u^2 + \int_M u^2
    (\delta_u^{(2)} \Lambda_g - \delta_u^{(1)} \Lambda_g) = 0,
  \end{equation}
  and
  \begin{equation}
    \int_{\partial M} (\delta_g^{(2)} (H + \gamma u^{- 1} u_{\nu}) -
    \delta_g^{(1)} (H + \gamma u^{- 1} u_{\nu})) u^2 + \int_M u^2
    (\delta_g^{(2)} \Lambda_g - \delta_g^{(1)} \Lambda_g) \geqslant 0.
  \end{equation}
\end{corollary}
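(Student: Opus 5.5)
The plan is to obtain both statements directly from Proposition \ref{variational principle}, applied separately to each of the two families $(g_t^{(i)}, u_t^{(i)})$, $i = 1, 2$, and then subtracted. Since both families have the same limit $(g, u)$ at $t = 0$, all the geometric quantities, the tensors $\mathcal{R}_g$, $\mathcal{A}_g$ and the weight $u^2$ in both applications are evaluated at the same base point $(g, u)$. Only the variations $\delta^{(i)} g$ and $\delta^{(i)} u$ differ between the two applications.

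For the first identity I will apply \eqref{var wrt u} to the family $i = 1$ and to the family $i = 2$. Each right-hand side is zero, so subtracting the two equations gives the claimed identity at once. No hypothesis on $\mathcal{R}_g$, $\mathcal{A}_g$ or on the ordering of the metrics is needed here.

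For the second statement I will apply \eqref{var wrt g} to each family and subtract the $i = 1$ equation from the $i = 2$ equation. The left-hand side becomes exactly the expression in the corollary. The right-hand side becomes
\[ \tfrac{1}{2} \int_M u^2 \langle \mathcal{R}_g, \delta^{(1)} g - \delta^{(2)} g \rangle + \tfrac{1}{2} \int_{\partial M} u^2 \langle \mathcal{A}_g, \delta^{(1)} g - \delta^{(2)} g \rangle . \]
The key observation concerns the tensor $\delta^{(1)} g - \delta^{(2)} g$. Because $g_0^{(1)} = g_0^{(2)} = g$, it equals $\lim_{t \to 0^+} t^{-1} (g_t^{(1)} - g_t^{(2)})$. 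The hypothesis $g_t^{(1)} \geqslant g_t^{(2)}$ then makes this symmetric 2-tensor non-negative, as a limit of non-negative tensors. I will take the one-sided limit $t \to 0^+$, which is legitimate since the derivatives are assumed to exist. The restriction of this tensor to $T \partial M$ is also non-negative.

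The pairing of two non-negative symmetric tensors is non-negative: in an orthonormal frame it is the trace of a product of two positive semidefinite matrices. Hence both integrands are non-negative, using $\mathcal{R}_g \geqslant 0$ on $M$ and $\mathcal{A}_g \geqslant 0$ on $\partial M$, and the inequality follows.

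The only point requiring care, and the step I expect to be the main obstacle, is justifying the sign of $\delta^{(1)} g - \delta^{(2)} g$. This needs the common limit at $t = 0$ together with the orientation $t > 0$ of the comparison. I will state explicitly that the variations are taken as right derivatives. I will also note that the boundary pairing only involves the tangential part of $\delta g$, because $\mathcal{A}_g$ is a tensor on $\partial M$.
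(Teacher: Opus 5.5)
Your proposal is correct and follows the paper's own argument: the paper obtains the corollary simply by applying Proposition \ref{variational principle} to each of the two families and subtracting. Your extra details fill in what the paper leaves implicit, namely that the one-sided limit $t \to 0^+$ gives $\delta^{(1)} g - \delta^{(2)} g \geqslant 0$, so both pairings with $\mathcal{R}_g \geqslant 0$ and $\mathcal{A}_g \geqslant 0$ are non-negative.
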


Now we compute the $\mathcal{R}_{\bar{g}}$ and $\mathcal{A}_{\bar{g}}$ for the
tangent cone metric $\bar{g} = \mathrm{d} t^2 + A^2 t^2 g_{\mathbb{S}^{n -
1}}$ and the function $\bar{u}$.

\begin{lemma}
  Let $0 < \gamma < 2$, $\bar{g} = \mathrm{d} t^2 + A^2 t^2 g_{\mathbb{S}^{n -
  1}}$ and $\bar{u} (t) = t^{\tfrac{1}{2 - \gamma}}$, then
  $\mathcal{R}_{\bar{g}} \geqslant 0$ and $\mathcal{A}_{\bar{g}} \geqslant 0$.
\end{lemma}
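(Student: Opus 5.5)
The plan is a direct computation in an orthonormal frame adapted to the cone. Let $e_0 = \partial_t$, and let $e_1, \dots, e_{n-1}$ be $\bar g$-unit vectors tangent to the level sets $\Sigma_t = \{t\} \times \mathbb{S}^{n-1}$. Write $a = \tfrac{1}{2-\gamma}$, so that $\bar u = t^a$ and $a(2-\gamma) = 1$. By rotational symmetry, every tensor involved is diagonal in this frame, with one radial eigenvalue and one tangential eigenvalue. So $\mathcal{R}_{\bar g} \geqslant 0$ reduces to checking two scalar functions of $t$, each of the form $t^{-2}$ times a constant.

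\textbf{Step 1: ingredients.} For the cone $\bar g = \mathrm{d} t^2 + A^2 t^2 g_{\mathbb{S}^{n-1}}$:
\begin{itemize}
\item $\operatorname{Ric}(e_0,e_0) = 0$, $\operatorname{Ric}(e_i,e_i) = (n-2)(A^{-2}-1)t^{-2}$, and the mixed terms vanish.
\item For a radial function $\phi(t)$, $\nabla^2 \phi = \phi''\,\mathrm{d} t^2 + \phi' t^{-1}(\bar g - \mathrm{d} t^2)$.
\item Hence $\nabla^2 \bar u/\bar u$ has radial entry $a(a-1)t^{-2}$ and tangential entry $a t^{-2}$.
\item $|\nabla \bar u|^2/\bar u^2 = a^2 t^{-2}$ and $\Delta \bar u/\bar u = a(a+n-2)t^{-2}$.
\item $\nabla^2 \bar u^2/\bar u^2$ has entries $2a(2a-1)t^{-2}$ (radial) and $2a\,t^{-2}$ (tangential).
\item $\Delta \bar u^2/\bar u^2 = 2a(2a+n-2)t^{-2}$.
\end{itemize}
I read $\nabla(u\nabla u)$ in \eqref{mod ric} as $\mathrm{d} u\otimes \mathrm{d} u + u\nabla^2 u$. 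With this reading, the first two $\gamma$-terms combine to $2\gamma\, \mathrm{d} u\otimes \mathrm{d} u/u^2$. Likewise $\nabla_k(u\nabla^k u)/u^2 = (|\nabla u|^2 + u\Delta u)/u^2$.

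\textbf{Step 2: the eigenvalues of $\mathcal{R}_{\bar g}$.} Substituting, the radial eigenvalue is
\[ t^{-2}\bigl(2\gamma a^2 - \gamma(2a^2 + a(n-2)) - 2a(2a-1) + 2a(2a+n-2)\bigr) = t^{-2}\, a\bigl(2(n-1) - (n-2)\gamma\bigr), \]
which is positive since $\gamma<2$. The tangential eigenvalue is $(n-2)(A^{-2}-1)t^{-2}$ plus
\[ t^{-2}\bigl(-\gamma(2a^2+a(n-2)) - 2a + 2a(2a+n-2)\bigr). \]
After using $a(2-\gamma)=1$, this should collapse to a nonnegative constant of exactly the type appearing in \eqref{unit cross-section size}, namely $(n-2)(A^{-2}-1) + \tfrac{2(n-1)}{2-\gamma}$ up to the normalisation of the paper's conventions.

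\textbf{Main obstacle.} Getting this tangential bookkeeping right is where I expect the difficulty. In particular I need to fix the convention for $\nabla(u\nabla u)$ and the signs of the $u^2$-Hessian terms, and I will double-check it against the trace identity relating $\operatorname{tr}\mathcal{R}$ to $\Lambda_{\bar g,\bar u}$. My first pass suggests the tangential eigenvalue simplifies to $(n-2)A^{-2}t^{-2}$. That is even better, and in either form it is nonnegative under \eqref{unit cross-section size}.

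\textbf{Step 3: $\mathcal{A}_{\bar g}$.} On a level set $\Sigma_t$ with normal $\nu = \partial_t$, we have $A_{\Sigma_t} = t^{-1} g_{\Sigma_t}$ and $\bar u^{-1}\bar u_\nu = a t^{-1}$. Hence
\[ \mathcal{A}_{\bar g} = \bigl(1 - (2-\gamma)a\bigr)t^{-1} g_{\Sigma_t} = 0 \geqslant 0. \]
This vanishing is precisely why $\bar u = t^{1/(2-\gamma)}$ is the right exponent. For the boundary piece with the opposite orientation (the inner sphere, if one uses an annular region), both terms flip sign together, so the conclusion is again $0$. This finishes the proof.
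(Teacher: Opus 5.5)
Your proposal is the same direct computation in the adapted frame that the paper carries out, and it is correct: your radial entry agrees with the paper's $\tfrac{2(n-1)-(n-2)\gamma}{2-\gamma}\,t^{-2}$, and you get $\mathcal{A}_{\bar g}=0$ exactly as the paper does. Drop the hedge on the tangential entry and commit to your first pass, because your displayed expression simplifies via $(2-\gamma)a(2a+n-2)=2a+n-2$ to exactly $(n-2)A^{-2}t^{-2}$ (for instance $n=3$, $\gamma=1$, $A=1$ gives $\bar u=t$ on flat space and $\mathcal{R}_{\bar g}=t^{-2}(\bar g+2\,\mathrm{d}t^2)$, with tangential entry $t^{-2}$, whereas the paper's displayed value $(n-2)(A^{-2}-1)+\tfrac{2(n-1)}{2-\gamma}$ would give $4t^{-2}$), so nonnegativity holds without even invoking \eqref{unit cross-section size}.
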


\begin{proof}
  Let $e_i$ be a vector orthogonal to $\partial_t$ which has length with
  respect to the metric $\bar{g}$ and $\bar{\nabla}$ be the connection with
  respect to the metric $\bar{g}$. The Ricci tensor of $\bar{g}$ is given by
  $\ensuremath{\operatorname{Ric}}_{\bar{g}} (\partial_t, \partial_t) = 0$ and
  $\ensuremath{\operatorname{Ric}}_{\bar{g}} (e_i, e_i) = (n - 2) (A^{- 2} -
  1) t^{- 2}$. Let $\alpha = \tfrac{1}{2 - \gamma}$, then $\bar{u} (t) =
  t^{\alpha}$. Then $\bar{\nabla}^2 \bar{u} (\partial_t, \partial_t) = \alpha
  (\alpha - 1) t^{\alpha - 2}$, $\bar{\nabla}^2 u (e_i, e_i) = \alpha
  t^{\alpha - 2}$ and $\Delta_{\bar{g}} u = \alpha (\alpha + n - 2) t^{\alpha
  - 2}$. Using these in the expression of $\mathcal{R}_{\bar{g}}$ and then
  $\alpha = \tfrac{1}{2 - \gamma}$, we find that
\begin{align}
t^2 \mathcal{R}_{\bar{g}} (\partial_t, \partial_t) & = \frac{2 (n - 1) -
(n - 2) \gamma}{2 - \gamma}, \\
t^2 \mathcal{R}_{\bar{g}} (e_i, e_i) & = (n - 2) (A^{- 2} - 1) + \tfrac{2
(n - 1)}{2 - \gamma} .
\end{align}
  By the assumption of Theorem \ref{llarull cone theorem}, see \eqref{unit
  cross-section size}, $\mathcal{R}_{\bar{g}} \geqslant 0$. And it is not
  difficult to see that $\mathcal{A}_{\bar{g}} = 0$.
\end{proof}

\begin{remark}
  The authors have used a computer algebra system to assist the computation.
\end{remark}

With the help of this corollary, we give the proof of Theorem \ref{llarull
cone theorem}.

\begin{proof}[Proof of Theorem \ref{llarull cone theorem}]
  Now let $(g_t^{(1)}, u_t^{(1)}) = (\hat{g}_t, \hat{u}_t)$ and $(g^{(2)}_t,
  u_t^{(2)}) = (\hat{g}_{\xi, t}, \hat{u}_{\xi, t})$. We multiply
  \eqref{taylor reduced} by $\bar{u}^2$, and by an application of the
  divergence theorem,
  \[ \hat{\eta} (t) t^{- 1} \bar{u}^2 | \hat{\Sigma}_1 | =
     \int_{\hat{\Sigma}_1} \bar{u}^2 \hat{\eta}_{t, 1, 0} t^{- 1} + O (t), \]
  where we have used that $\bar{u}$ is constant on $\hat{\Sigma}_1$. Now $t^{-
  1} \hat{\eta}_{t, 1, 0}$ can be interpreted as
  \[ \delta_g^{(2)} (H + \gamma u^{- 1} u_{\nu}) - \delta_g^{(1)} (H + \gamma
     u^{- 1} u_{\nu}) + (\delta_u^{(2)} (H + \gamma u^{- 1} u_{\nu}) -
     \delta_u^{(1)} (H + \gamma u^{- 1} u_{\nu})) \]
  along $\hat{\Sigma}_1$. By Corollary \ref{infinitesimal comparison} and the
  assumptions on comparisons $g \geqslant g_{\xi}$ and $\Lambda_{g, u}
  \geqslant \Lambda (g_{\xi}, u_{\xi})$, we have that $\hat{\eta} (t)
  \leqslant O (t)$, which implies that $\lim_{t \to 0} \hat{\eta} (t)
  \leqslant 0$.
  
  By the argument of Proposition \ref{sign of leaf}, $\hat{\eta} (t)
  \leqslant 0$ for all sufficiently small $t > 0$. By rescaling back, we
  obtain that each leaf of the foliation $\{\Sigma_t \}_{t \in (0 {,}
  \varepsilon)}$ in Proposition \ref{foliation naer conical point} satisfies
  $\eta (t) \leqslant 0$. Each choice of $\Sigma_t$ gives a lower barrier. If
  there are two conical points, we may apply this construction twice and
  obtain both the upper barrier and the lower barrier. In either case, Theorem
  \ref{llarull cone theorem} are reduced to Theorem \ref{spec llarull band},
  see Remark \ref{slightly general}. By choosing $t$ smaller, we obtain the
  global rigidity.
\end{proof}

\subsection{Llarull's theorem}

Now we prove Theorem \ref{cone llarull}.

\begin{proof}[Proof of Theorem \ref{cone llarull}]
  We briefly sketch an argument of Chai-Wang (cf. {\cite[Section
  3.3]{chai-scalar-2025}}) which reduces the theorem to Theorem \ref{spec
  llarull band} with $\gamma = 0$.
  
  Let $N$ be the north pole of $S_I^n$, and $U$ be the neighborhood of $N$
  such that the $I$-direction coordinate is less than $t$. We consider $f^{-
  1} (U\backslash\{N\})$. The metric of $g$ in $f^{- 1} (U\backslash\{N\})$
  takes the form of $\mathrm{d} r^2 + r^2 g_S$ where $S$ is some manifold of
  dimension $n - 1$ and $g_S$ be the induced metric, and by the metric
  comparison, $r \geqslant t + o (t)$. (Note that $t = O (r)$.) We can see
  that the mean curvature of the $r$-level set $\Sigma_r$ is $\tfrac{n - 1}{r}
  + o (r^{- 1})$, by the relation $r \geqslant t + o (r)$, $\Sigma_r$ is an
  approximate (non-strict) barrier in the sense that $\tfrac{n - 1}{r}
  \leqslant \tfrac{n - 1}{t} + o (r^{- 1})$. In fact, $\Sigma_r$ can be
  perturbed into a strict barrier if $\lim_{r \to 0^+} \tfrac{r}{t} \gneqq 1$.
  Hence, $\lim_{r \to 0} \tfrac{r}{t} = 1$. In that case, taking the tangent
  cones $C_1$ and $C_2$ at $N$ and $p$, we see there exists a map from the
  unit cross-section of $C_1$ to the unit cross-section of $C_2$, which is of
  non-zero degree and of distance non-increasing. Since the scalar curvature
  comparison is preserved by taking the tangent cones and the comparison
  descents to the unit cross-section. Hence the lower dimensional Llarull's
  Theorem \ref{std llarull} shows that the cross-sections are isometric.
  Hence, we showed that $(M, g)$ and $(\mathbb{S}_I^n, g_{\xi})$ have
  isometric tangent cones at $p \in f^{- 1} (N)$ and $N$. Using Theorem
  \ref{spec llarull band} with $u$ being constant finishes the proof.
\end{proof}

\bibliographystyle{alpha}
\bibliography{4dllarull}

\end{document}